\let\oldsection\section
\renewcommand\section{\setcounter{equation}{0}\oldsection}
\newtheorem{theorem}{Theorem}[section]
\newtheorem{lemma}{Lemma}[section]
\newtheorem{proposition}{Proposition}[section]
\newtheorem{definition}{Definition}[section]
\newtheorem{remark}{Remark}[section]
\date{December 20, 2013}
\begin{document}

\title[Strong Solutions to the 3D Primitive Equations]{Local and Global Well-posedness of Strong Solutions to the 3D Primitive Equations With Vertical Eddy Diffusivity}

\author{Chongsheng~Cao}
\address[Chongsheng~Cao]{Department of Mathematics, Florida International University, University Park, Miami, FL 33199, USA}
\email{caoc@fiu.edu}

\author{Jinkai~Li}
\address[Jinkai~Li]{Department of Computer Science and Applied Mathematics, Weizmann Institute of Science, Rehovot 76100, Israel}
\email{jklimath@gmail.com}

\author{Edriss~S.~Titi}
\address[Edriss~S.~Titi]{Department
of Computer Science and Applied Mathematics, Weizmann Institute of Science,
Rehovot 76100, Israel. Also Department of Mathematics and Department of Mechanical and Aerospace Engineering, University of California, Irvine, California 92697-3875, USA}
\email{etiti@math.uci.edu and edriss.titi@weizmann.ac.il}

\begin{abstract}
In this paper, we consider the initial-boundary value problem of the viscous 3D primitive equations for oceanic and atmospheric dynamics with only vertical diffusion in the temperature equation. Local and global well-posedness of strong solutions are established for this system with $H^2$ initial data.
\end{abstract}

\maketitle

{\bf MSC Subject Classifications:} 35Q35, 65M70, 86-08,86A10.

{\bf Keywords:} well-posedness; strong solution; primitive equation; vertical diffusion; Boussinesq equations.

\allowdisplaybreaks
\section{Introduction}\label{sec1}

The primitive equations are derived from the full incompressible Navier-Stokes
equations using the Boussinesq and hydrostatic approximations. They are the fundamental models for weather prediction, see, e.g., Lewandowski \cite{LEWAN}, Pedlovsky \cite{PED}, and Washington and Parkinson \cite{WP}. In the context of the oceans and the atmosphere dynamics the horizontal scales are much larger than the vertical one. By taking advantage of this, the scale analysis (see, e.g., Pedlovsky \cite{PED} and Vallis \cite{VALLIS}) leads to the hydrostatic approximation, see also Az\'erad and Guill\'en \cite{AZERADGUILLEN} and Lions, Temam and Wang \cite{LTW92B} for the rigourous mathematical justification.

In this paper, we consider the primitive equations with only vertical diffusion. The primitive equations are given by the following system (see, e.g., \cite{LTW92A,LTW92B,TZ04,PTZ09,MAJDA})
\begin{eqnarray}
&\partial_tv+(v\cdot\nabla_H)v+w\partial_zv+\nabla_Hp+L_1v+f_0k\times v=0,\label{1.1}\\
&\partial_zp+T=0,\label{1.2}\\
&\nabla_H\cdot v+\partial_zw=0,\label{1.3}\\
&\partial_tT+(v\cdot\nabla_H)T+w\partial_zT+L_2T=Q,\label{1.4}
\end{eqnarray}
where the horizontal velocity $v=(v^1,v^2)$, the vertical velocity $w$, the temperature $T$ and the pressure $p$ are the unknowns, $f_0$ is the Coriolis parameter, and $Q$ is a given heat source. Here, for simplicity, we assume that the heat source $Q$ is identically
zero; however, the results obtained in this paper hold true for the nonzero but appropriately
regular $Q$. The operators $L_1$ and $L_2$ in (\ref{1.1}) and (\ref{1.4}) are the viscosity and the heat vertical diffusion
operators, respectively, given by
$$
L_1=-\frac{1}{R_1}\Delta_H-\frac{1}{R_2}\partial_z^2,\qquad L_2=-\frac{1}{R_3}\partial_z^2
$$
with positive constants $R_1, R_2$ and $R_3$, where $R_1, R_2$ represent the horizontal and vertical dimensionless Reynolds numbers, respectively, while $R_3$ is the vertical dimensionless
eddy heat diffusivity turbulence mixing coefficient (see \cite{GARGETT,GARRETT} for example). In this paper, we use the notations $\nabla_H=(\partial_x,\partial_y)$ and $\Delta_H=\partial_x^2+\partial_y^2$ to stand for the horizontal gradient and Laplacian, respectively.

The mathematical studies of primitive equations were initialed by Lions, Temam and Wang \cite{LTW92A,LTW92B,LTW95} in 1990s, where the global existence of weak solutions were obtained.
Weak solutions in 2D turn out to be unique, see Bresch, Guill\'en-Gonz\'alez, Masmoudi and Rodr\'iguez-Bellido \cite{BGMR03}; however, the uniqueness of weak solutions in the three-dimensional case is still unclear. Concerning the strong solutions for the 2D case, the local existence result was established by Guill\'en-Gonz\'alez, Masmoudi and Rodr\'iguez-Bellido \cite{GMR01}, while the global existence for 2D case was achieved  by Bresch, Kazhikhov and Lemoine in \cite{BKL04} and Temam and Ziane in \cite{TZ04}. The global existence of strong solutions for 3D case was established by Cao and Titi \cite{CAOTITI2}. In \cite{CAOTITI2}, the authors take advantage of the observation that the pressure is essentially a function of the two-dimensional horizontal variables; as a result, they obtain the $L^6$ estimates on the velocity vector field, which allows them to prove the global well-posedness of strong solutions. The global existence of strong solutions were also obtained later by Kobelkov \cite{KOB06}, see also
the subsequent articles of Kukavica and Ziane \cite{KZ07A,KZ07B} for different boundary condition. In all the papers \cite{CAOTITI2,KOB06,KZ07A,KZ07B}, system are assumed to have diffusion in all directions. Recently, it is whown by Cao and Titi \cite{CAOTITI3} that these global existence results still hold true for system with only vertical diffusion, provided the local in time strong solutions exist.

The aims of this paper are two folds: on one hand, we establish the local existence of strong solutions to the primitive equations with only vertical diffusion, provided the initial data belong to $H^2$; on the other hand, we prove that this local strong solution can be in fact extended to be a global one by adopting the energy estimates established in \cite{CAOTITI3} and
some suitable $t$-weighted estimates, as well as our local existence result. Note that the regularity assumptions on the initial data in this paper are weaker than those in Cao and Titi \cite{CAOTITI3} and consequently we improve the results of \cite{CAOTITI3}.

In this paper, we consider the problem in the domain $\Omega_0=M\times(-h,0)$ with $M=(0,1)\times(0,1)$. We complement system (\ref{1.1})--(\ref{1.4}) with the boundary conditions
\begin{eqnarray}
& v, w, T \mbox{ are }\mbox{periodic in }x \mbox{ and }y ,\label{1.5}\\
&(\partial_zv,w)|_{z=-h,0}=0,\label{1.7}\\
&T|_{z=-h}=1,\quad T|_{z=0}=0,\label{1.8}
\end{eqnarray}
and the initial data
\begin{eqnarray}
&(v,T)|_{t=0}=(v_0, T_0). \label{1.9}
\end{eqnarray}

Replacing $T$ and $p$ by $T+\frac{z}{h}$ and $p-\frac{z^2}{2h}$, respectively, then system (\ref{1.1})--(\ref{1.4}) with (\ref{1.5})--(\ref{1.9}) is equivalent to the following system
\begin{eqnarray}
&\partial_tv+(v\cdot\nabla_H)v+w\partial_zv+\nabla_Hp+L_1v+f_0k\times v=0,\label{1-1.1}\\
&\partial_zp+T=0,\label{1-1.2}\\
&\nabla_H\cdot v+\partial_zw=0,\label{1-1.3}\\
&\partial_tT+(v\cdot\nabla_H)T+w\left(\partial_zT+\frac{1}{h}\right)+L_2T=0,\label{1-1.4}
\end{eqnarray}
complemented with the boundary and initial conditions
\begin{eqnarray}
& v, w, T \mbox{ are }\mbox{periodic in }x \mbox{ and }y ,\label{1-1.5}\\
&(\partial_zv,w)|_{z=-h,0}=0,\quad T|_{z=-h,0}=0,\label{1-1.8}\\
&(v,T)|_{t=0}=(v_0, T_0). \label{1-1.9}
\end{eqnarray}
Here, for simplicity, we still use $T_0$ to denote the initial temperature in (\ref{1-1.9}), though it is now different from that in (\ref{1.9}).

Notice that the periodic subspace $\mathcal H$, given by
\begin{align*}
  \mathcal H:=&\{(v,w,p,T)|v,w,p\mbox{ and }T\mbox{are spatially periodic in all three variables} \\
  &\mbox{and even, odd, even and odd in }z \mbox{ variable},\mbox{ respectively}\},
\end{align*}
is invariant under the dynamics system (\ref{1.1})--(\ref{1.4}). That is if the initial data satisfy the properties stated in the definition of $\mathcal H$, then, as we will see later (see Theorem \ref{thm1}), the solutions to system (\ref{1.1})--(\ref{1.4}) will obey the same symmetry as the initial data. This motivated us to consider the following system
\begin{eqnarray}
&\partial_tv+(v\cdot\nabla_H)v+w\partial_zv+\nabla_Hp+L_1v+f_0k\times v=0,\label{1.1-1}\\
&\partial_zp+T=0,\label{1.2-1}\\
&\nabla_H\cdot v+\partial_zw=0,\label{1.3-1}\\
&\partial_tT+(v\cdot\nabla_H)T+w\left(\partial_zT+\frac{1}{h}\right)+L_2T=0,\label{1.4-1}
\end{eqnarray}
in $\Omega:=M\times(-h,h)$, subject to the boundary and initial conditions
\begin{eqnarray}
& v, w, p \mbox{ and } T \mbox{ are }\mbox{periodic in }x, y, z,\label{1.5-1}\\
& v\mbox{ and }p \mbox{ are even in }z,\mbox{ and } w\mbox{ and }T\mbox{ are odd in }z,\label{1.5-3}\\
&(v,T)|_{t=0}=(v_0, T_0). \label{1.9-1}
\end{eqnarray}
One can easily check that the restriction on the sub-domain $\Omega_0$ of a solution $(v,w,p,T)$ to system (\ref{1.1-1})--(\ref{1.9-1}) is a solution to the original system (\ref{1-1.1})--(\ref{1-1.9}).
Because of this, throughout this paper, we mainly concern on the study of system (\ref{1.1-1})--(\ref{1.9-1}) defined on $\Omega$, while the well-posedness results for system (\ref{1-1.1})--(\ref{1-1.9}) defined on $\Omega_0$ follow as a corollary of those for system (\ref{1.1-1})--(\ref{1.9-1}).

For any function $\phi(x,y,z)$ defined on $\Omega$, we denote
$$
\bar\phi(x,y)=\frac{1}{2h}\int_{-h}^h\phi(x,y,z)dz,\qquad\tilde\phi=\phi-\bar\phi.
$$
System (\ref{1.1-1})--(\ref{1.9-1})
is equivalent to (see, e.g., \cite{CAOTITI3})
\begin{eqnarray}
&\partial_tv+L_1v+(v\cdot\nabla_H)v-\left(\int_{-h}^z\nabla_H\cdot v(x,y,\xi,t)d\xi\right)\partial_zv\nonumber\\
&\quad\qquad+f_0k\times v+\nabla_H\left(p_s(x,y,t)-\int_{-h}^zT(x,y,\xi,t)d\xi\right)=0,\label{1.10}\\
&\nabla_H\cdot\bar v=0,\label{1.11}\\
&\partial_tT+L_2T+v\cdot\nabla_HT-\left(\int_{-h}^z\nabla_H\cdot v(x,y,\xi,t)d\xi\right)\left(\partial_zT+\frac{1}{h}\right)=0\label{1.12}
\end{eqnarray}
in $\Omega=M\times(-h,h)$, complemented with the following boundary and initial conditions
\begin{eqnarray}
&v\mbox{ and } T \mbox{ are periodic in }x, y, z,\label{1.13}\\
&v\mbox{ and } T \mbox{ are even and odd in }z,\mbox{ respectively},\\
&(v,T)|_{t=0}=(v_0, T_0). \label{1.16}
\end{eqnarray}
In addition, one can also check that $\bar v$ and $\tilde v$ satisfy the following system (see, e.g., \cite{CAOTITI3})
\begin{eqnarray}
&\partial_t\bar v-\frac{1}{R_1}\Delta_H\bar v+(\bar v\cdot\nabla_H)\bar v+\overline{(\tilde v\cdot\nabla_H)\tilde v+(\nabla_H\cdot\tilde v)\tilde v}+f_0k\times\bar v\nonumber\\
&+\nabla_H\left(p_s(x,y,t)-\frac{1}{2h}\int_{-h}^h\int_{-h}^zT(x,y,\xi,t)d\xi dz\right)=0,\label{1.17}\\
&\nabla_H\cdot\bar v=0,\label{1.18}\\
&\partial_t\tilde v+L_1\tilde v+(\tilde v\cdot\nabla_H)\tilde v-\left(\int_{-h}^z\nabla_H\cdot\tilde v(x,y,\xi,t)d\xi\right)\partial_z\tilde v+(\tilde v\cdot\nabla_H)\bar v\nonumber\\
&+(\bar v\cdot\nabla_H)\tilde v-\overline{(\tilde v\cdot\nabla_H)\tilde v+(\nabla_H\cdot\tilde v)\tilde v}+f_0k\times\tilde v\nonumber\\
&-\nabla_H\left(\int_{-h}^zT(x,y,\xi,t)d\xi-\frac{1}{2h}\int_{-h}^h\int_{-h}^zT(x,y,\xi,t)d\xi dz\right)=0.\label{1.19}
\end{eqnarray}

Throughout this paper, we denote by $L^q(\Omega), L^q(M)$ and $W^{m,q}(\Omega), W^{m,q}(M)$ the standard Lebesgue and Sobolev spaces, respectively. For $q=2$, we use $H^m$ instead of $W^{m,2}$. We use $W_{\text{per}}^{m,q}(\Omega)$ and $H^m_{\text{per}}$ to denote the spaces of periodic functions in $W^{m,q}(\Omega)$ and $H^m(\Omega)$, respectively.
For simplicity, we use the same notations $L^p$ and $H^m$ to denote the $N$ product spaces $(L^p)^N$ and $(H^m)^N$, respectively. We always use $\|u\|_p$ to denote the $L^p$ norm of $u$.

Definitions of the strong solution, maximal existence time and global strong solution are stated in the following three definitions, respectively.

\begin{definition}\label{def1.1}
Let $v_0\in H^2(\Omega)$ and $T_0\in H^2(\Omega)$ be two periodic functions, such that they are even and odd in $z$, respectively. Given a positive number $t_0$. A couple $(v,T)$ is called a strong solution to system
(\ref{1.10})--(\ref{1.16}) (or equivalently (\ref{1.1-1})-(\ref{1.9-1})) on $\Omega\times(0,t_0)$ if

(i) $v$ and $T$ are periodic in $x,y,z$, and they are even and odd in $z$, respectively;

(ii) $v$ and $T$ have the regularities
\begin{align*}
&v\in L^\infty(0,t_0; H^2(\Omega))\cap C([0,t_0]; H^1(\Omega))\cap L^2(0,t_0; H^3(\Omega))\\
&T\in L^\infty(0,t_0; H^2(\Omega))\cap C([0,t_0]; H^1(\Omega)),\quad\partial_zT\in L^2(0,t_0; H^2(\Omega)),\\ &\partial_tv\in L^2(0,t_0; H^1(\Omega)),\quad\partial_tT\in L^2(0,t_0;H^1(\Omega));
\end{align*}

(iii) $v$ and $T$ satisfies (\ref{1.10})--(\ref{1.12}) a.e. in $\Omega\times(0,t_0)$ and the initial condition (\ref{1.16}).
\end{definition}

\begin{definition}
A finite positive number $T^*$ is called the maximal existence time of a strong solution $(v,T)$ to system (\ref{1.10})--(\ref{1.16}) if $(v,T)$ is a strong solution to system on
$\Omega\times(0, t_0)$ for any $t_0<T^*$ and $
\displaystyle\varlimsup_{t\rightarrow T^*_-}(\|v\|_{H^2}^2+\|T\|_{H^2}^2)=\infty.
$
\end{definition}

\begin{definition}
A couple $(v,T)$ is called a global strong solution to system (\ref{1.10})--(\ref{1.16}) if it is a strong solution on $\Omega\times(0,t_0)$ for any $t_0<\infty$.
\end{definition}

The main result of this paper is the following:

\begin{theorem}\label{thm1}
Suppose that the periodic functions $v_0,T_0\in H^2(\Omega)$ are even and odd in $z$, respectively. Then system (\ref{1.1-1})-(\ref{1.9-1}) has a unique global strong solution $(v,T)$.
\end{theorem}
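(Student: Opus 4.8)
The plan is to prove Theorem \ref{thm1} in two stages: first establish local-in-time existence and uniqueness of a strong solution for arbitrary $H^2$ data, and then show that the $H^2$ norm of the solution cannot blow up in finite time, so that the maximal existence time must be $+\infty$.

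For the local theory, I would construct approximate solutions by a Galerkin scheme adapted to the symmetry class $\mathcal H$, using eigenfunctions of $L_1$ and $L_2$ that respect the parity in $z$, and derive a priori estimates uniform on a short interval $[0,t_0]$ with $t_0$ depending only on $\|v_0\|_{H^2}+\|T_0\|_{H^2}$. The estimates proceed hierarchically. The basic $L^2$ energy estimate follows by testing (\ref{1.10}) with $v$ and (\ref{1.12}) with $T$; the transport and vertical-advection terms then cancel, using $\nabla_H\cdot\bar v=0$ together with integration by parts in $z$ and $w=-\int_{-h}^z\nabla_H\cdot v\,d\xi$, leaving the horizontal and vertical dissipation on the left and the forcing $\frac1h w$ to be absorbed. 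The $H^1$ and $H^2$ estimates are obtained by testing with successive derivatives, and the cubic nonlinearities are controlled by anisotropic Ladyzhenskaya--Sobolev inequalities, exploiting that $v$ enjoys full viscosity $L_1$ and hence the strong parabolic gain in $L^2(0,t_0;H^3)$. Uniqueness I would prove by writing the equations for the difference of two solutions and estimating it in a lower norm, using the $H^2$ bound on one of the solutions and a Gr\"onwall argument.

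For the global extension, I would argue by contradiction against the continuation criterion: suppose $T^*<\infty$; it then suffices to bound $\|v\|_{H^2}^2+\|T\|_{H^2}^2$ uniformly on $[0,T^*)$. Here I would invoke the energy estimates established in \cite{CAOTITI3}, which provide the lower-order global-in-time bounds for any local strong solution, in particular control of $v$ and $T$ in $L^\infty(0,T^*;H^1)$ together with the corresponding dissipation integrals. Since these alone do not close the $H^2$ bound starting from merely $H^2$ data, I would complement them with suitable $t$-weighted estimates: multiplying the higher-order energy inequalities by an appropriate power of $t$ converts the not-yet-available time integrability of the top-order norms near $t=0$ into finite quantities, so that the global estimates can be run for $t>0$ without demanding extra regularity of the data. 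Combined with the local existence result, which supplies the required regularity on an initial slab $[0,t_0]$, this yields a finite bound for $\|v\|_{H^2}^2+\|T\|_{H^2}^2$ on all of $[0,T^*)$, contradicting the blow-up alternative; hence no finite maximal time exists and the solution is global.

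The main obstacle throughout is the absence of horizontal diffusion in the temperature equation (\ref{1.12}): the operator $L_2=-\frac{1}{R_3}\partial_z^2$ furnishes dissipation only in the vertical direction, so the horizontal second derivatives $\partial_x^2T,\partial_x\partial_yT,\partial_y^2T$ cannot be recovered from parabolic smoothing and must instead be propagated through the transport structure. Closing the top-order estimate for $T$ therefore requires carefully pairing these horizontal derivatives against the advection terms, in particular controlling the vertical velocity $w=-\int_{-h}^z\nabla_H\cdot v\,d\xi$, which trades a horizontal derivative of $v$ for a vertical integration; the estimate is closed only by spending the strong regularity of $v$, namely its $L^2(0,t_0;H^3)$ bound, and the vertical diffusion of $T$ simultaneously, via anisotropic product estimates. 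This coupling between the two equations, rather than either equation in isolation, is the crux of both the local and the global arguments.
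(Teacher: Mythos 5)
Your global strategy coincides with the paper's: argue against a finite maximal time $T^*$ by combining the global lower-order energy estimates of \cite{CAOTITI3} with $t$-weighted estimates on higher-order quantities, using the local result to cover an initial slab; this is exactly Section \ref{sec4}. Where you genuinely diverge is the local theory. The paper does not use a Galerkin scheme: it regularizes the temperature equation by adding $-\varepsilon\Delta_HT$ (so the approximate system has full diffusion), solves the regularized system by a contraction-mapping argument in $C([0,t_0];H^2)\cap L^2(0,t_0;H^3)$ (Section \ref{sec2}), shows the existence time and the $H^2$ bounds are uniform in $\varepsilon$ --- the a priori estimates (\ref{3.1})--(\ref{3.4}) close without the horizontal diffusion of $T$ --- and passes to the limit by the Aubin--Lions lemma. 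Your Galerkin route is viable in this periodic setting: trigonometric bases respect the parity class, a basis adapted to the constraint $\nabla_H\cdot\bar v=0$ eliminates the surface pressure, and all integrations by parts are exact at the approximate level, which would even spare you the difference-quotient and density justifications of the Lemma \ref{lem2.0} type; the price is constructing the constrained basis and checking that the $H^2$ estimates survive the spectral projection. What the paper's regularization buys is a genuine PDE at every stage of the approximation, from which continuous dependence (Proposition \ref{prop2}) is extracted along the way; your uniqueness-by-low-norm plan matches the paper's, which rests on the anisotropic estimates of Lemma \ref{lem3.2}.

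Two caveats on the global half. First, your summary of what \cite{CAOTITI3} supplies is inaccurate: there is no global $L^\infty(0,T^*;H^1)$ bound on $T$ at low order --- control of $\nabla_HT$ is precisely what the missing horizontal diffusivity forbids. The estimates actually available ((\ref{4.9}) in the paper) control $\|v\|_2$, $\|T\|_\infty$, $\|\nabla_H\bar v\|_2$, $\|\partial_z^2v\|_2$ and, crucially, $\|\eta\|_2$ and $\|\theta\|_2$, where $\theta=\nabla_H\cdot\partial_zv+R_1T$ is the compensating quantity whose evolution cancels the uncontrollable $\nabla_HT$ term; horizontal regularity of $T$ is recovered only afterwards, through the weighted bound on $\mathcal X$ and then (\ref{4.14})--(\ref{4.15}). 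Second, ``multiplying the higher-order inequalities by a power of $t$'' understates the mechanism: the inequality (\ref{4.5}) is of log-Gr\"onwall type, $\mathcal X'\leq a\,\mathcal X\log\mathcal X+b\,\mathcal X+c$, so the weight is applied to $\mathcal Z=\log\mathcal X$ (yielding $t^2\mathcal Z\leq C$ from $\int_0^{t_0^*}t\,\mathcal X\,dt<\infty$), and before any of this one must justify --- the paper does so via difference quotients in Propositions \ref{lem5.1}--\ref{lem5.4} of the Appendix --- that a strong solution with only the regularity of Definition \ref{def1.1} admits these higher-order manipulations at all. Neither point invalidates your plan, since you defer to the machinery of \cite{CAOTITI3}, but as written the proposal would not close without importing $\eta$, $\theta$ and the logarithmic structure explicitly.
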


As a first step of proving Theorem \ref{thm1}, we prove the local existence of strong solutions. This is done by regularizing the original system, solving the regularized
system and then taking the limit as the regularization parameter $\varepsilon$ tends to zero. More precisely, we first prove the local existence of strong solutions to the regularized system by the contraction mapping principle, then we prove
that the existence time and the corresponding a priori estimates for these solutions are
independent of the regularization parameter $\varepsilon$, and finally, thanks to these uniform
estimates, we can take the limit to obtain the local strong solutions the original system. By adopting the energy inequalities established in \cite{CAOTITI3} and doing the $t$-weighted estimates on the high order derivatives, we prove that the $H^2$ norms of the solutions keep finite for any finite time, and thus prove the global existence of strong solutions.

As a corollary of Theorem \ref{thm1}, we have the following theorem, which states the well-posedness of strong solutions to system (\ref{1-1.1})--(\ref{1-1.9}). The strong solutions to system (\ref{1-1.1})--(\ref{1-1.9}) are defined in the similar way as before.

\begin{theorem}\label{thm2}
Let $v_0$ and $T_0$ be two functions such that they are periodic in $x$ and $y$. Denote by $v_0^{ext}$ and $T_0^{ext}$ the even and odd extensions in $z$ of $v_0$ and $T_0$, respectively. Suppose that $v_0^{ext}, T_0^{ext}\in H^2_{\text{per}}(\Omega)$. Then system (\ref{1-1.1})--(\ref{1-1.9}) has a unique global strong solution $(v,T)$.
\end{theorem}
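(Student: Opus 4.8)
The plan is to deduce Theorem~\ref{thm2} from Theorem~\ref{thm1} by exploiting the even/odd reflection that relates the two systems. Given the data $v_0,T_0$ on $\Omega_0$, I would first pass to their even and odd extensions $v_0^{ext},T_0^{ext}$ on $\Omega$, which by hypothesis lie in $H^2_{\text{per}}(\Omega)$ and are, respectively, even and odd in $z$. Applying Theorem~\ref{thm1} to this extended data yields a unique global strong solution $(v,T)$ of system (\ref{1.1-1})--(\ref{1.9-1}) on $\Omega$, and, since the symmetry class is invariant under the dynamics, $v$ stays even and $T$ stays odd in $z$ for all time (so that the reconstructed $w$ is odd in $z$ as well).

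For existence, I would restrict $(v,w,p,T)$ to the subdomain $\Omega_0$. As already observed in the text, this restriction solves (\ref{1-1.1})--(\ref{1-1.9}); the only point that needs checking is the boundary conditions (\ref{1-1.8}), and these follow directly from the symmetry together with $2h$-periodicity in $z$. Indeed, $v$ even in $z$ makes $\partial_zv$ odd, so $\partial_zv|_{z=0}=0$; combining oddness, $\partial_zv(-h)=-\partial_zv(h)$, with periodicity, $\partial_zv(-h)=\partial_zv(h)$, forces $\partial_zv|_{z=-h}=0$. The same argument applied to the odd functions $w$ and $T$ gives $w|_{z=-h,0}=0$ and $T|_{z=-h,0}=0$. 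The regularity required of a strong solution on $\Omega_0$ is inherited by restriction from that of $(v,T)$ on $\Omega$.

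For uniqueness, I would run the argument in reverse. Given any global strong solution $(v,T)$ of (\ref{1-1.1})--(\ref{1-1.9}) on $\Omega_0$, extend $v$ evenly and $T$ oddly in $z$ to all of $\Omega$. The key technical check is that these extensions retain $H^2$ regularity across the gluing interfaces $z=0$ and $z=-h$: for the even extension this is exactly the condition $\partial_zv|_{z=-h,0}=0$, which cancels the jump in $\partial_zv$ at the interface, while for the odd extension it is exactly $T|_{z=-h,0}=0$, which cancels the jump in $T$. Thus the boundary conditions (\ref{1-1.8}) are precisely what make the reflected fields a global strong solution of (\ref{1.1-1})--(\ref{1.9-1}) in the symmetry class, with initial data $v_0^{ext},T_0^{ext}$. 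The uniqueness assertion of Theorem~\ref{thm1} then identifies this extension with the solution constructed above, and restricting back to $\Omega_0$ yields uniqueness for the original problem.

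The proof is essentially a reduction, so there is no single hard analytic estimate; the step requiring the most care is the exact equivalence between solutions on $\Omega_0$ and symmetric solutions on $\Omega$. Concretely, one must confirm that the even/odd reflection maps $H^2(\Omega_0)$ into $H^2(\Omega)$ precisely under the boundary conditions (\ref{1-1.8}), and that the reflected fields satisfy the equations (\ref{1.1-1})--(\ref{1.4-1}) in the a.e.\ sense across the interfaces, which again relies on the matching of traces guaranteed by the symmetry.
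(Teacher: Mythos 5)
Your existence argument coincides with the paper's: apply Theorem \ref{thm1} to $(v_0^{\mathrm{ext}},T_0^{\mathrm{ext}})$ and restrict to $\Omega_0$, with the boundary conditions (\ref{1-1.8}) recovered from the even/odd symmetry plus $2h$-periodicity exactly as you do. For uniqueness, however, you take a genuinely different route from the paper, and it has a gap. The paper proves uniqueness by rerunning the $L^2$ continuous-dependence estimate of Proposition \ref{prop2} directly on $\Omega_0$, where the imposed boundary conditions $(\partial_z v, w)|_{z=-h,0}=0$, $T|_{z=-h,0}=0$ kill all boundary terms in the integrations by parts; no reflection of solutions is needed. You instead reflect an \emph{arbitrary} strong solution on $\Omega_0$ to $\Omega$ and invoke the uniqueness assertion of Theorem \ref{thm1}. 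For that invocation to be legitimate, the reflected fields must lie in the strong-solution class of Definition \ref{def1.1}, which requires not only $v,T\in L^\infty(0,t_0;H^2(\Omega))$ but also $v\in L^2(0,t_0;H^3(\Omega))$ and $\partial_zT\in L^2(0,t_0;H^2(\Omega))$. Your trace checks ($\partial_zv|_{z=-h,0}=0$ for the even extension, $T|_{z=-h,0}=0$ for the odd one) are correct and suffice for $v^{\mathrm{ext}}\in H^2$ (indeed $H^3$: the subsequent matchings are automatic, since $\partial_z^2v$ extends evenly) and for $T^{\mathrm{ext}}\in H^2$. But they do \emph{not} give $\partial_zT^{\mathrm{ext}}\in H^2(\Omega)$: the second $z$-derivative of the odd extension jumps by $-2\,\partial_z^2T|_{z=0^-}$ across the interface, so $\partial_z^3T^{\mathrm{ext}}$ acquires a Dirac mass unless $\partial_z^2T|_{z=-h,0}=0$ in the trace sense for a.e.\ $t$. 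A simple example shows the obstruction is real: $T(z)=z(z+h)$ vanishes at $z=-h,0$, yet its odd extension has a second derivative jumping from $2$ to $-2$ at $z=0$, hence lies in $H^2$ but not in $H^3$, and its $\partial_z$ is not in $H^2$.

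The missing trace condition $\partial_z^2T|_{z=-h,0}=0$ is not among the imposed boundary conditions; it must be \emph{derived} from equation (\ref{1-1.4}) evaluated at the boundary (where $T=0$, hence $\partial_tT=0$ and $\nabla_HT=0$, and $w=0$, forcing $\partial_z^2T=0$), which in turn requires enough regularity of the solution to take these traces — this is precisely the argument sketched in Remark 1.1 of the paper, where smoothness away from the initial time is invoked for this purpose. So your reduction can be repaired: either supply this derivation of $\partial_z^2T|_{z=-h,0}=0$ for a.e.\ $t>0$ before reflecting, or observe (more economically, and as the paper does) that the uniqueness proof never needs the reflected solution at all, since the energy estimates (\ref{3.7})--(\ref{3.10}) go through verbatim on $\Omega_0$ under the boundary conditions (\ref{1-1.8}). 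As written, the step ``the reflected fields are a global strong solution of (\ref{1.1-1})--(\ref{1.9-1})'' is unjustified, and it is exactly the place where the delicate compatibility at the interfaces, beyond the $H^2$ level you checked, enters.
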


The existence part follows directly by applying Theorem \ref{thm1} with initial data $(v_0^{\text{ext}}, T_0^{\text{ext}})$ and restricting the solution on the sub-domain $\Omega_0$. While the uniqueness part can be proven in the same way as that for Theorem \ref{thm1}.

\begin{remark}
  The condition that $v_0^{ext}, T_0^{ext}\in H^2_{\text{per}}(\Omega)$ in the above theorem is necessary for the existence of strong solutions to system (\ref{1-1.1})--(\ref{1-1.9}). Using the similar arguments as stated in the appendix section of this paper, one can show that strong solutions are in fact smooth away from the initial time. It follows from equation (\ref{1-1.4}) and the boundary condition (\ref{1-1.8}) that $\partial_z^2T|_{z=-h,0}=0$ for any $t>0$, and thus we can extend $T$ oddly and periodically in $z$ such that $T^{\text{ext}}$ is odd in $z$ and belongs to $H_{\text{per}}^2(\Omega)$ for any $t>0$. By the definition of strong solutions, it follows that $T\in L^\infty(0,t_0; H^2(\Omega_0))\cap C([0,t_0]; L^2(\Omega_0))$, and thus $T^{\text{ext}}\in L^\infty(0,t_0; H^2(\Omega))\cap C([0,t_0]; L^2(\Omega))$. Combining these statements, by Banach-Alaoglu theorem, it must have $T_0^{\text{ext}}\in H_{\text{per}}^2$. Similarly, one can verify that $v_0^{\text{ext}}$ must belong to $H_{\text{per}}^2(\Omega)$.
\end{remark}

The rest of this paper is arranged as follows: in the next section, section \ref{sec2}, we prove the local existence of strong solutions to the regularized system; in section \ref{sec3}, we establish the local existence and uniqueness of strong solutions to system (\ref{1.1-1})--(\ref{1.9-1}); in section \ref{sec4}, we show that the local strong solution can be extended to be a global one and thus obtain a global strong solution; some necessary regularities used in section \ref{sec4} are justified in the appendix section.

Throughout this paper, the constant $C$ denotes a general constant which may be different from line to line.

\section{The regularized system with full Diffusion}\label{sec2}

In this section, we prove the local existence of strong solutions to the following
modified system
\begin{eqnarray}
&\partial_tv+L_1v+(v\cdot\nabla_H)v-\left(\int_{-h}^z\nabla_H\cdot v(x,y,\xi,t)d\xi\right)\partial_zv\nonumber\\
&\quad\qquad+f_0k\times v+\nabla_H\left(p_s(x,y,t)-\int_{-h}^zT(x,y,\xi,t)d\xi\right)=0,\label{2.0-1}\\
&\nabla_H\cdot\bar v=0,\label{2.0-2}\\
&\partial_tT+L_2T-\varepsilon\Delta_HT+v\cdot\nabla_HT-\left(\int_{-h}^z\nabla_H\cdot v(x,y,\xi,t)d\xi\right)\left(\partial_zT+\frac{1}{h}\right)=0,\label{2.0-3}
\end{eqnarray}
complemented with the boundary and initial conditions
\begin{eqnarray}
&v\mbox{ and }T \mbox{ are periodic in }x, y, z,\label{2.0-6}\\
&v\mbox{ and } T \mbox{ are even and odd in }z,\mbox{ respectively},\\
&(v,T)|_{t=0}=(v_0, T_0). \label{2.0-7}
\end{eqnarray}

Strong solutions to system (\ref{2.0-1})--(\ref{2.0-7}) are defined in the similar way as Definition \ref{def1.1}. We have the following proposition.

\begin{proposition}\label{prop1}
Given $\varepsilon>0$. Let $v_0$ and $T_0\in H^2(\Omega)$ be two periodic functions, such that they are even and odd in $z$, respectively. Then system (\ref{2.0-1})--(\ref{2.0-7}) has a strong solution $(v,T)$ on $\Omega\times(0,t_\varepsilon)$ such that
$$
(v,T)\in L^2(0,t_\varepsilon; H^3(\Omega)),\qquad(\partial_tv,\partial_tT)\in L^2(0,t_\varepsilon; H^1(\Omega)),
$$
where $t_\varepsilon>0$ depends only on $R_1,R_2,R_3,h,\varepsilon$ and the initial data.
\end{proposition}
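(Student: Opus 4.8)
The plan is to prove local existence for the regularized system (\ref{2.0-1})--(\ref{2.0-7}) by the Banach fixed point (contraction mapping) principle, exploiting the fact that the $\varepsilon\Delta_H T$ term restores \emph{full} diffusion to the temperature equation, so that $T$ now enjoys the same horizontal parabolic smoothing that $v$ already possesses. First I would recast the coupled system as a fixed-point problem: given a pair $(u,S)$ drawn from a suitable closed ball in the energy space $X_{t_\varepsilon}:=\{(v,T):v\in L^\infty(0,t_\varepsilon;H^2)\cap L^2(0,t_\varepsilon;H^3),\ T\in L^\infty(0,t_\varepsilon;H^2)\cap L^2(0,t_\varepsilon;H^3)\}$, define the map $\Phi(u,S)=(v,T)$ by solving the \emph{linear} problems obtained from (\ref{2.0-1}) and (\ref{2.0-3}) with the nonlinear coefficients frozen at $(u,S)$, namely
\begin{align*}
&\partial_t v+L_1 v+f_0 k\times v=-(u\cdot\nabla_H)u+\Big(\int_{-h}^z\nabla_H\cdot u\,d\xi\Big)\partial_z u-\nabla_H\Big(p_s-\int_{-h}^z S\,d\xi\Big),\\
&\partial_t T+L_2T-\varepsilon\Delta_H T=-u\cdot\nabla_H S+\Big(\int_{-h}^z\nabla_H\cdot u\,d\xi\Big)\Big(\partial_z S+\tfrac1h\Big),
\end{align*}
together with the divergence constraint (\ref{2.0-2}), the periodicity/parity conditions and the prescribed initial data. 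The pressure $p_s$ is recovered, as usual, by projecting onto the divergence-free barotropic part, i.e.\ by solving a two-dimensional elliptic problem in $(x,y)$ for $\bar v$; this is where the constraint (\ref{2.0-2}) is used.

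Next I would establish the two hypotheses of the contraction principle. For the \emph{self-map} property, one applies linear parabolic regularity to each equation: because both $L_1+\varepsilon$-type operators and $L_2-\varepsilon\Delta_H$ are now uniformly elliptic in all three directions, standard maximal-regularity / energy estimates give bounds on $\|(v,T)\|_{X_{t_\varepsilon}}$ in terms of the $H^2$ norm of the initial data plus the $L^2(0,t_\varepsilon;H^1)$ norms of the right-hand sides. The right-hand sides are controlled by products such as $\|u\|_{H^2}\|u\|_{H^2}$ and $\|u\|_{H^2}\|S\|_{H^2}$ via the algebra property $H^2(\Omega)\hookrightarrow L^\infty$ and the anisotropic estimates on the vertical-integral term $\int_{-h}^z\nabla_H\cdot u\,d\xi$. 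Collecting these, one obtains an estimate of the schematic form
$$
\|\Phi(u,S)\|_{X_{t_\varepsilon}}\le C\big(\|v_0\|_{H^2}+\|T_0\|_{H^2}\big)+C\,t_\varepsilon^{\theta}\,R^2
$$
for some $\theta>0$ on the ball of radius $R$; choosing $R=2C(\|v_0\|_{H^2}+\|T_0\|_{H^2})$ and then $t_\varepsilon$ small forces $\Phi$ to map the ball into itself. For the \emph{contraction} property, I would estimate $\Phi(u_1,S_1)-\Phi(u_2,S_2)$ by subtracting the two linear systems and running the same energy estimate on the differences; the nonlinear terms are bilinear, so their differences are controlled by $\|(u_1,S_1)-(u_2,S_2)\|$ times a bound on the radius, again with a small factor $t_\varepsilon^{\theta}$, yielding a contraction for $t_\varepsilon$ small. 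The unique fixed point is the desired strong solution, and its parity/periodicity is preserved because the linear solution operators and the nonlinear maps respect the invariant subspace $\mathcal H$.

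The main obstacle I expect is the careful treatment of the nonlinear coupling term containing the vertical integral $\big(\int_{-h}^z\nabla_H\cdot u\,d\xi\big)\partial_z u$ and its temperature analogue: this term loses one horizontal derivative through the divergence while involving a vertical derivative, so the naive $H^2\times H^2\to H^1$ bound is borderline and must be organized through anisotropic Ladyzhenskaya/Agmon-type inequalities (controlling $\|\int_{-h}^z\nabla_H\cdot u\,d\xi\|_{L^\infty_{xy}L^2_z}$ and similar mixed norms) rather than by the isotropic Sobolev embedding alone. Getting the right-hand sides genuinely into $L^2(0,t_\varepsilon;H^1)$ with a gainable small power of $t_\varepsilon$ is the crux; the extra $\varepsilon\Delta_H T$ term is precisely what makes this feasible, since it supplies the $L^2(0,t_\varepsilon;H^3)$ control of $T$ needed to close the estimates symmetrically with $v$. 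Finally I would note that the higher regularity $(v,T)\in L^2(0,t_\varepsilon;H^3)$ and $(\partial_t v,\partial_t T)\in L^2(0,t_\varepsilon;H^1)$ stated in the proposition follows from the maximal-regularity estimates for the linear problems once the fixed point is known to lie in $X_{t_\varepsilon}$, and that the dependence of $t_\varepsilon$ on $R_1,R_2,R_3,h,\varepsilon$ and the data is explicit from the constants in these estimates.
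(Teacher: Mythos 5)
Your proposal is correct and follows essentially the same route as the paper: a contraction mapping argument in $C([0,t_\varepsilon];H^2)\cap L^2(0,t_\varepsilon;H^3)$ with frozen nonlinear coefficients, the pressure handled through the two-dimensional (barotropic) Stokes problem for the vertical average (the paper makes this explicit by splitting the output velocity as $U+V$ with $\overline V\equiv 0$), and the small factor $t_\varepsilon^{1/4}$ extracted from the $H^2$--$H^3$ imbalance in the quadratic terms, with the $\varepsilon\Delta_H T$ term supplying the $L^2(0,t_\varepsilon;H^3)$ control of $T$ exactly as you indicate. The only cosmetic difference is organizational: the paper proves the Lipschitz estimate first and obtains the self-map property as a corollary by comparing with $\mathfrak F(0,0)$, rather than running a separate boundedness estimate.
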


We will use the contractive mapping principle to prove this proposition. We first introduce the
function spaces and define the mapping. For any given positive number $t_0$, we define the spaces
\begin{align*}
X_0=&\big\{\phi~|~\phi\in C([0,t_0];H^2(M))\cap L^2(0,t_0; H^3(M)), \phi \mbox{ is periodic}\big\},\\
X=&\big\{v~|~v\in C([0,t_0];H^2(\Omega))\cap L^2(0,t_0; H^3(\Omega)), \nabla_H\cdot\bar v=0, \\
& v\mbox{ is periodic in }x,y,z\mbox{ and even in }z\big\},\\
Y=&\big\{T~|~T\in C([0,t_0];H^2(\Omega))\cap L^2(0,t_0; H^3(\Omega)), \\
&T\mbox{ is periodic in }x,y,z \mbox{ and odd in }z\big\},
\end{align*}
and set $\mathcal M_{t_0}=X\times Y$. The norms of these function spaces are defined in the
natural way.

For any given $(v,T)\in\mathcal M_{t_0}$, define a map $ \mathfrak{F}:\mathcal M_{t_0}\rightarrow \mathcal M_{t_0}$ as follows
\begin{equation}
\mathfrak{F}(v,T)=(\mathcal V,\mathcal T),\quad\mathcal V=U+V,\label{2.1}
\end{equation}
where $(U,V,\mathcal T)$ is the unique solution to
\begin{align}
&\partial_t U-\frac{1}{R_1}\Delta_HU+\nabla_Hp=A(v,T),&\mbox{in }M\times(0,t_0),\label{2.2}\\
&\nabla_H\cdot U=0,&\mbox{in }M\times(0,t_0),\label{2.3}\\
&\partial_tV+L_1V=B(v,T),&\mbox{in }\Omega\times(0,t_0),\label{2.4}\\
&\partial_t\mathcal T-\varepsilon\Delta_H\mathcal T+L_2\mathcal T=E(v,T),&\mbox{in }\Omega\times(0,t_0),\label{2.5}
\end{align}
with boundary and initial conditions
\begin{eqnarray}
&U\mbox{ is periodic in }x\mbox{ and }y,\label{2.6}\\
&V\mbox{ and }\mathcal T\mbox{ are periodic in }x,y,z,\label{2.8}\\
&V\mbox{ and }\mathcal T\mbox{ are even and odd in }z, \mbox{ respectively,}\\
&(U,V,\mathcal T)|_{t=0}=(\bar v_0, \tilde v_0, T_0). \label{2.9}
\end{eqnarray}
Here the nonlinear operators $A(v,T), B(v,T)$ and $E(v,T)$ in (\ref{2.2})--(\ref{2.5}) are given by
\begin{align}
A(v,T)=&-(\bar v\cdot\nabla_H)\bar v-\overline{(\tilde v\cdot\nabla_H)\tilde v+(\nabla_H\cdot\tilde v)\tilde v}-f_0k\times\bar v\nonumber\\
&+\nabla_H\left(\frac{1}{2h}\int_{-h}^h\int_{-h}^zT(x,y,\xi,t)d\xi dz\right),\label{2.10}\\
B(v,T)=&-(\tilde v\cdot\nabla_H)\tilde v+\left(\int_{-h}^z\nabla_H\cdot\tilde v(x,y,\xi,t)d\xi\right)\partial_z\tilde v-(\tilde v\cdot\nabla_H)\bar v\nonumber\\
&-(\bar v\cdot\nabla_H)\tilde v+\overline{(\tilde v\cdot\nabla_H)\tilde v+(\nabla_H\cdot\tilde v)\tilde v}-f_0k\times\tilde v\nonumber\\
&+\nabla_H\left(\int_{-h}^zT(x,y,\xi,t)d\xi-\frac{1}{2h}\int_{-h}^h\int_{-h}^zT(x,y,\xi,t)d\xi dz\right),\label{2.11}\\
E(v,T)=&-v\cdot\nabla_HT+\left(\int_{-h}^z\nabla_H\cdot v(x,y,\xi,t)d\xi\right)\left(\partial_zT
+\frac{1}{h}\right).\label{2.12}
\end{align}

For any $(v,T)\in\mathcal M_{t_0}$, one can check that $A(v,T)$ is periodic in $x,y$, $B(v,T)$ is periodic in $x,y,z$ and even in $z$, and $E(v,T)$ is periodic in $x,y,z$ and odd in $z$. In addition, one has $\overline{B(v,T)}=0$, and thus by equation (\ref{2.4}), $\overline V$ satisfies
\begin{equation*}
\partial_t\overline V-\frac{1}{R_1}\Delta_H\overline V=0,\quad\mbox{in }M\times(0,t_0).
\end{equation*}
This implies that $\overline V\equiv0$.
One can easily check that, for any given $(v,T)\in\mathcal M_{t_0}$
\begin{eqnarray*}
&A(v,T)\in L^2(0,t_0; H^1(M)),\nonumber\\
&B(v,T), E(v,T)\in L^2(0,t_0; H^1(\Omega)).
\end{eqnarray*}
By standard $L^2$ theory of linear Stokes equations and parabolic equations, there is a unique solution $(U,V,\mathcal T)\in X_0\times X\times Y$ to system (\ref{2.2})--(\ref{2.9}), such that
$$
\partial_tU\in L^2(0,t_0; H^1(M)), \quad\partial_t V\in L^2(0,t_0; H^1(\Omega)),\quad\partial_t\mathcal T\in L^2(0,t_0; H^1(\Omega)).
$$
Recalling that $\nabla_H\cdot U=0$ and $\overline V\equiv0$, it follows that $\nabla_H\cdot\overline{\mathcal V}=\nabla_H\cdot U+\nabla_H\overline V=0$. Combining these statements, the mapping $\mathfrak F$, given by (\ref{2.1}),
is well defined, and it has an extra regularity
\begin{equation}\label{nw3}
\partial_t\mathfrak F(V,T)\in L^2(0,t_0; H^1(\Omega)).
\end{equation}

One can easily check that
\begin{align*}
A(v,T)+B(v,T)=&-(v\cdot\nabla_H)v+\left(\int_{-h}^z\nabla_H\cdot v(x,y,\xi,t)d\xi\right)\partial_zv\\
&+\nabla_H\left(\int_{-h}^z
T(x,y,\xi,t)d\xi\right)-f_0k\times v=:D(v,T),
\end{align*}
and that
$$
\overline{D(v,T)}=A(v,T),\qquad B(v,T)=D(v,T)-\overline{D(v,T)}.
$$
As a result, recalling (\ref{2.2})--(\ref{2.9}) and $\overline V\equiv0$, $(\mathcal V, \mathcal T)$ satisfies
\begin{eqnarray*}
&\partial_t\mathcal V+L_1\mathcal V+\nabla_H p(x,y,t)=D(v,T),\\
&\nabla_H\cdot \overline{\mathcal V}=0,\\
&\partial_t\mathcal T-\varepsilon\Delta_H\mathcal T+L_2\mathcal T=E(v,T),
\end{eqnarray*}
subject to the boundary and initial value conditions
\begin{eqnarray*}
&\mathcal V\mbox{ and }\mathcal T\mbox{ are periodic in }x,y,z,\\
&\mathcal V\mbox{ and }\mathcal T\mbox{ are even and odd in }z, \mbox{ respectively},\\
&(\mathcal V,\mathcal T)|_{t=0}=(v_0, T_0).
\end{eqnarray*}
Therefore, to find a strong solution to system (\ref{2.0-1})--(\ref{2.0-7}), it suffices to find a fixed point of the mapping $\mathfrak F$ in $\mathcal M_{t_0}$.

Before continuing our arguments, let's state and prove the following lemma on differentiation under the integral sign and integration by parts.

\begin{lemma}\label{lem2.0}
Let $f$ and $g$ be two spatial periodic functions such that
\begin{eqnarray*}
&f\in L^2(0,t_0; H^3(\Omega)),\quad\partial_tf\in L^2(0,t_0; H^1(\Omega)),\\
&g\in L^2(0,t_0; H^2(\Omega)),\quad\partial_tg\in L^2(0,t_0; L^2(\Omega)).
\end{eqnarray*}
Then it follows that
\begin{eqnarray*}
&&\frac{d}{dt}\int_\Omega|\Delta f|^2dxdydz=-2\int_\Omega\nabla\partial_tf\cdot\nabla\Delta fdxdydz,\\
&&\int_\Omega\nabla\partial_{x^i}^2f\cdot\nabla\Delta fdxdydz=\int_\Omega|\partial_{x^i}\Delta f|^2dxdydz
\end{eqnarray*}
and
\begin{eqnarray*}
&&\frac{d}{dt}\int_\Omega|\partial_{x^i} g|^2dxdydz=-2\int_\Omega\partial_tg\partial_{x^i}^2 gdxdydz,\\
&&\int_\Omega\partial_{x^i}^2g\partial_{x^j}^2 gdxdydz=\int_\Omega|\partial_{x^i}\partial_{x^j}g|^2dxdydz
\end{eqnarray*}
for a.e. $t\in(0,t_0)$, where $x^i, x^j\in\{x,y,z\}$.
\end{lemma}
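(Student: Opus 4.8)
The plan is to establish each of the four identities by first assuming extra smoothness in time, deriving the identity in a pointwise-in-$t$ sense via integration by parts in the spatial variables, and then removing the regularity gap by a density/mollification argument. The structure of the four claims splits naturally into two pairs: the two ``$\frac{d}{dt}$'' identities are differentiation-under-the-integral-sign statements, while the two remaining identities are purely spatial integration-by-parts formulas valid for a.e.\ fixed $t$, exploiting periodicity to discard boundary terms.

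First I would treat the spatial identities, since they require no differentiation in $t$. For a.e.\ $t\in(0,t_0)$ we have $f(\cdot,t)\in H^3(\Omega)$ and $g(\cdot,t)\in H^2(\Omega)$, both periodic. For the identity $\int_\Omega \partial_{x^i}^2 g\,\partial_{x^j}^2 g\,dxdydz=\int_\Omega|\partial_{x^i}\partial_{x^j}g|^2\,dxdydz$, I would first prove it for a smooth periodic $g$ by integrating by parts twice: moving one $\partial_{x^i}$ off the first factor and one $\partial_{x^j}$ off the second (the periodic boundary terms vanish), which turns the left side into $\int_\Omega \partial_{x^i}\partial_{x^j}g\,\partial_{x^i}\partial_{x^j}g$. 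Then I would extend to $g\in H^2_{\mathrm{per}}(\Omega)$ by taking a sequence $g_n\to g$ in $H^2$ and passing to the limit, noting that both sides are continuous in the $H^2$ topology. The identity $\int_\Omega \nabla\partial_{x^i}^2 f\cdot\nabla\Delta f=\int_\Omega|\partial_{x^i}\Delta f|^2$ is handled analogously: for smooth periodic $f$ I would integrate by parts in $\nabla$ to write the left side as $-\int_\Omega \partial_{x^i}^2 f\,\Delta^2 f$, then integrate by parts twice more in $x^i$ to move both $\partial_{x^i}$ derivatives onto $\Delta f$, giving $-\int_\Omega \Delta f\,\partial_{x^i}^2\Delta f=\int_\Omega|\partial_{x^i}\Delta f|^2$; density in $H^3_{\mathrm{per}}$ then finishes it.

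For the two time-derivative identities, the key tool is the standard lemma that if $u\in L^2(0,t_0;H)$ with $\partial_t u\in L^2(0,t_0;H')$ for a suitable Gelfand triple, then $t\mapsto\|u\|^2$ is absolutely continuous with $\frac{d}{dt}\|u\|^2=2\langle\partial_t u,u\rangle$. Here I would apply it in the appropriate pairing. For $\frac{d}{dt}\int_\Omega|\partial_{x^i}g|^2=-2\int_\Omega\partial_t g\,\partial_{x^i}^2 g$, I would use $\partial_{x^i}g\in L^2(0,t_0;H^1)$ together with $\partial_t\partial_{x^i}g\in L^2(0,t_0;H^{-1})$ (obtained from $\partial_t g\in L^2(0,t_0;L^2)$ by differentiating in space in the distributional sense), yielding $\frac{d}{dt}\int_\Omega|\partial_{x^i}g|^2=2\langle\partial_t\partial_{x^i}g,\partial_{x^i}g\rangle$; integrating by parts in $x^i$ and using periodicity converts this to $-2\int_\Omega\partial_t g\,\partial_{x^i}^2 g$. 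The first identity, $\frac{d}{dt}\int_\Omega|\Delta f|^2=-2\int_\Omega\nabla\partial_t f\cdot\nabla\Delta f$, is the same argument one rung higher: with $\Delta f\in L^2(0,t_0;H^1)$ and $\partial_t\Delta f\in L^2(0,t_0;H^{-1})$ I get $\frac{d}{dt}\int_\Omega|\Delta f|^2=2\langle\partial_t\Delta f,\Delta f\rangle$, and a spatial integration by parts rewrites $\langle\Delta\partial_t f,\Delta f\rangle=-\int_\Omega\nabla\partial_t f\cdot\nabla\Delta f$.

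The main obstacle is making the integration-by-parts manipulations rigorous at exactly the stated regularity, since the pairings above are between a function in $H^1$ and a distribution in $H^{-1}$, and one cannot naively integrate by parts inside a duality bracket. The clean way to sidestep this is to mollify in time: replace $f$ and $g$ by temporal mollifications $f_\delta,g_\delta$, which are smooth in $t$ and retain the spatial regularity, prove every identity for the mollified functions where all integrations by parts are classical, and then pass to the limit $\delta\to0$ using the convergences $f_\delta\to f$ in $L^2(0,t_0;H^3)$, $\partial_t f_\delta\to\partial_t f$ in $L^2(0,t_0;H^1)$ and similarly for $g$. I expect the verification that the limiting identities hold for a.e.\ $t$ (after integrating against an arbitrary test function in $t$) to be the only genuinely delicate point; the algebra of the integrations by parts themselves is routine once the regularity is in place.
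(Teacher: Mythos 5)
Your proposal is correct and takes essentially the same route as the paper: the paper's proof is exactly the regularize--integrate-by-parts--pass-to-the-limit argument you describe, modeled on the lemma of Lions (Lemma 1.2, p.\ 260 of Temam), where one approximates $f$ and $g$ by smooth periodic functions converging in the stated norms, performs the classical integrations by parts, and identifies the limits by testing against arbitrary $\varphi\in C_0^\infty((0,t_0))$ in time. Your Gelfand-triple formulation of the two $\frac{d}{dt}$ identities and your temporal-mollification fallback are just a more explicit packaging of that same mechanism, so no substantive difference remains.
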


\begin{proof}
The idea of proof follows in the similar lines like the proof of a lemma of Lions (see, e.g., Lemma 1.2 in page 260 of Temam \cite{TEMAM}). We only prove the identities concerning $f$, those for $g$ can be done in the same way. By standard regularization, one can easily show that there is a sequence of smooth functions $\{f_n\}$, such that $f_n$ is periodic in space variables and
$$
f_n\rightarrow f\quad\mbox{ in }L^2(0,t_0; H^3(\Omega)),\qquad \partial_t f_n\rightarrow\partial_t f\quad\mbox{ in }L^2(0,t_0; L^2(\Omega)).
$$
Take arbitrary function $\varphi(t)\in C_0^\infty((0,t_0))$. It is obviously that
\begin{align*}
&\int_0^{t_0}\varphi'(t)\left(\int_\Omega|\Delta f|^2dxdydz\right)dt\\
=&\lim_{n\rightarrow\infty}\int_0^{t_0}\varphi'(t)\left(\int_\Omega|\Delta f_n|^2dxdydz\right)dt\\
=&-2\lim_{n\rightarrow\infty}\int_0^{t_0}\varphi(t)\left(\int_\Omega\Delta f_n\Delta\partial_tf_ndxdydz\right)dt\\
=&2\lim_{n\rightarrow\infty}\int_0^{t_0}\varphi(t)\left(\int_\Omega\nabla\partial_tf_n\nabla\Delta f_ndxdydz\right)dt\\
=&2\int_0^{t_0}\varphi(t)\left(\int_\Omega\nabla\partial_tf\nabla\Delta fdxdydz\right)dt,
\end{align*}
and
\begin{align*}
&\int_0^{t_0}\varphi(t)\left(\int_\Omega\nabla\partial_{x^i}^2f\nabla\Delta fdxdydz\right)dt\\
=&\lim_{n\rightarrow\infty}\int_0^{t_0}\varphi(t)\left(\int_\Omega\nabla\partial_{x^i}^2f_n\nabla\Delta f_ndxdydz\right)dt\\
=&\lim_{n\rightarrow\infty}\int_0^{t_0}\varphi(t)\left(\int_\Omega|\partial_{x^i}\Delta f_n|^2dxdydz\right)dt\\
=&\int_0^{t_0}\varphi(t)\left(\int_\Omega|\partial_{x^i}\Delta f|^2dxdydz\right)dt.
\end{align*}
These identities imply the conclusion.
\end{proof}

\begin{proposition}\label{lem2.1}
Given arbitrary positive number $K$ and time $t_0$, such that $K\geq1$ and $0<t_0\leq 1$, and set $\mathbb B_K=\{(v,T)\in\mathcal M_{t_0}\big|\|(v,T)\|_{\mathcal M_{t_0}}\leq K\}$. Then for any $(v_1,T_1), (v_2,T_2)\in\mathbb B_K$, we have
$$
\|\mathfrak F(v_1,T_1)-\mathfrak F(v_2,T_2)\|_{\mathcal M_{t_0}}\leq C_\varepsilon K t_0^{1/4}\|(v_1-v_2,T_1-T_2)\|_{\mathcal M_{t_0}},
$$
where $C_\varepsilon$ is a constant depending only on $R_1, R_2, R_3, h$ and $\varepsilon$.
\end{proposition}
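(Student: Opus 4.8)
The plan is to exploit the \emph{linearity} of the solution map $(v,T)\mapsto(U,V,\mathcal T)$ defined by (\ref{2.2})--(\ref{2.9}), together with the standard maximal-regularity estimates for the linear Stokes and parabolic problems, so as to reduce the whole statement to an estimate on the differences of the nonlinear forcing terms $A,B,E$. First I would subtract the two systems. Writing $(\mathcal V_i,\mathcal T_i)=\mathfrak F(v_i,T_i)$ with $\mathcal V_i=U_i+V_i$, the triples $(U_i,V_i,\mathcal T_i)$ solve (\ref{2.2})--(\ref{2.5}) with the \emph{same} initial data $(\bar v_0,\tilde v_0,T_0)$. Hence the differences $\delta U=U_1-U_2$, $\delta V=V_1-V_2$, $\delta\mathcal T=\mathcal T_1-\mathcal T_2$ satisfy the same linear equations with zero initial data and with right-hand sides $A(v_1,T_1)-A(v_2,T_2)$, $B(v_1,T_1)-B(v_2,T_2)$ and $E(v_1,T_1)-E(v_2,T_2)$, respectively. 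Since the operators $\frac1{R_1}\Delta_H$ (Stokes), $L_1$ and $-\varepsilon\Delta_H+L_2$ are uniformly elliptic, the $L^2$ theory of linear Stokes and parabolic equations yields
\[
\|(\delta\mathcal V,\delta\mathcal T)\|_{\mathcal M_{t_0}}\le C_\varepsilon\Big(\|A(v_1,T_1)-A(v_2,T_2)\|_{L^2(0,t_0;H^1)}+\|B(v_1,T_1)-B(v_2,T_2)\|_{L^2(0,t_0;H^1)}+\|E(v_1,T_1)-E(v_2,T_2)\|_{L^2(0,t_0;H^1)}\Big),
\]
where the $\varepsilon$-dependence of the constant enters only through the horizontal diffusion coefficient $\varepsilon$ in the $\mathcal T$-equation. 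Thus everything reduces to bounding these three forcing differences in $L^2(0,t_0;H^1(\Omega))$ by $C_\varepsilon K\,t_0^{1/4}\|(v_1-v_2,T_1-T_2)\|_{\mathcal M_{t_0}}$.

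Next I would expand each difference using the bilinear structure via $a_1b_1-a_2b_2=(a_1-a_2)b_1+a_2(b_1-b_2)$, so that every resulting term is \emph{linear} in $(\delta v,\delta T):=(v_1-v_2,T_1-T_2)$, with one remaining factor drawn from $(v_i,T_i)$. The genuinely local products, namely $(\bar v\cdot\nabla_H)\bar v$, $(\tilde v\cdot\nabla_H)\tilde v$, $(\nabla_H\cdot\tilde v)\tilde v$, the cross terms, the Coriolis terms and the linear temperature term $\nabla_H\int_{-h}^zT\,d\xi$, are all controlled by the product estimate $\|fg\|_{H^1}\le C\|f\|_{H^2}\|g\|_{H^1}$, valid in three dimensions since $H^2\hookrightarrow L^\infty\cap W^{1,6}$ and $H^1\hookrightarrow L^6$. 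Placing the difference factor in $L^\infty(0,t_0;H^2)$ and the remaining $H^1$ factor under $\int_0^{t_0}$, Hölder in time gives $\int_0^{t_0}\|\delta v\|_{H^2}^2\,dt\le t_0\|\delta v\|_{L^\infty(0,t_0;H^2)}^2$, hence a factor $t_0^{1/2}\le t_0^{1/4}$ (recall $t_0\le1$), together with the factor $K$ coming from $\|(v_i,T_i)\|_{\mathcal M_{t_0}}\le K$ and the convention $K\ge1$.

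The main obstacle is the nonlocal terms $\big(\int_{-h}^z\nabla_H\cdot v\,d\xi\big)\partial_zv$ and $\big(\int_{-h}^z\nabla_H\cdot v\,d\xi\big)\partial_zT$, whose differences yield products of $\Phi:=\int_{-h}^z\nabla_H\cdot v\,d\xi$ with $\partial_zv$ or $\partial_zT$. Here the naive product estimate fails, because $\Phi$ effectively costs a horizontal derivative and is not controlled in $H^2$ by $\|v\|_{H^2}$ alone; one only has $\|\Phi\|_{H^1}\le C\|v\|_{H^2}$ and $\|\Phi\|_{H^2}\le C\|v\|_{H^3}$. To close the $H^1$ bound of, say, $\delta\Phi\,\partial_zv_1$ (the companion term $\Phi_2\,\partial_z\delta v$ is analogous, with the extra regularity placed on $v_2$), I would keep the difference factor $\delta\Phi$ at the $H^1$ level, bounded by $\|\delta v\|_{H^2}$, and spend the extra regularity on the \emph{coefficient}, using the three-dimensional Agmon and Gagliardo--Nirenberg inequalities $\|\partial_zv_1\|_{L^\infty}\le C\|v_1\|_{H^2}^{1/2}\|v_1\|_{H^3}^{1/2}$ and $\|\nabla\partial_zv_1\|_{L^3}\le C\|v_1\|_{H^2}^{1/2}\|v_1\|_{H^3}^{1/2}$. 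This produces the pointwise bound $\|\delta\Phi\,\partial_zv_1\|_{H^1}\le C\|\delta v\|_{H^2}\|v_1\|_{H^2}^{1/2}\|v_1\|_{H^3}^{1/2}$.

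Squaring and integrating in time then gives
\[
\int_0^{t_0}\|\delta v\|_{H^2}^2\,\|v_1\|_{H^2}\,\|v_1\|_{H^3}\,dt\le K^2\,t_0^{1/2}\,\|(v_1-v_2,T_1-T_2)\|_{\mathcal M_{t_0}}^2,
\]
where I have used $\|v_1\|_{L^\infty(0,t_0;H^2)}\le K$, $\|\delta v\|_{L^\infty(0,t_0;H^2)}\le\|(v_1-v_2,T_1-T_2)\|_{\mathcal M_{t_0}}$, and the crucial Cauchy--Schwarz step $\int_0^{t_0}\|v_1\|_{H^3}\,dt\le t_0^{1/2}\|v_1\|_{L^2(0,t_0;H^3)}\le t_0^{1/2}K$. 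Taking square roots yields precisely the factor $t_0^{1/4}$, which is the bottleneck exponent responsible for the $t_0^{1/4}$ in the statement; the same treatment applies verbatim to the nonlocal temperature term in $E$, with $\partial_zT_1$ or $\partial_z\delta T$ in place of the velocity. Finally, summing the local and nonlocal contributions over all terms of $A,B,E$, and using $K\ge1$ and $t_0\le1$ to absorb the better powers $t_0^{1/2}$ into $t_0^{1/4}$, one obtains the claimed bound with a constant $C_\varepsilon$ depending only on $R_1,R_2,R_3,h,\varepsilon$. The delicate point throughout is precisely this allocation of regularity in the nonlocal terms: the difference is always kept at the lowest ($H^2$) level measured in $L^\infty_t$, while the extra half-derivative is extracted from the coefficient through $L^2_tH^3$, which is exactly what converts the borderline $t_0^0$ estimate into one with the gain $t_0^{1/4}$.
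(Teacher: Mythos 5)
Your proposal is correct and takes essentially the same route as the paper: both reduce the contraction estimate to an $L^2(0,t_0;H^1(\Omega))$ bound on the differences of the nonlinear forcings and then extract the factor $K t_0^{1/4}$ by Cauchy--Schwarz in time, pairing an $L^\infty_t H^2$ factor against an $L^2_t H^3$ factor, exactly as in the paper's estimate (\ref{2.19}); the paper merely works with the combined forcing $D=A+B$ and proves the linear bound by explicit energy identities (justified via Lemma \ref{lem2.0}) rather than citing maximal regularity for the split system $(U,V,\mathcal T)$. The only cosmetic difference is that you always place the $H^2$--$H^3$ interpolation on the coefficients $(v_i,T_i)$, while the paper also puts it on the difference (terms $\|v_1-v_2\|_{H^2}\|v_1-v_2\|_{H^3}$), which is equally legitimate since the $\mathcal M_{t_0}$-norm controls $L^2(0,t_0;H^3)$.
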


\begin{proof}
It follows from the H\"older and the Sobolev embedding inequalities that
\begin{align}
&\int_\Omega(|D(v_1,T_1)-D(v_2,T_2)|^2+|\nabla(D(v_1,T_1)-D(v_2,T_2))|^2)dxdydz\nonumber\\
\leq&C\int_\Omega\bigg[|v_1|^2|\nabla(v_1-v_2)|^2+|v_1-v_2|^2|\nabla v_2|^2+
|v_1|^2|\nabla^2(v_1-v_2)|^2\nonumber\\
&+|v_1-v_2|^2|\nabla^2 v_2|^2+|\nabla(v_1-v_2)|^2(|\nabla v_1|^2+|\nabla v_2|^2)\nonumber\\
&+\left(\int_{-h}^h|\nabla(T_1-T_2)|d\xi\right)^2+\left(\int_{-h}^h|\nabla^2(T_1-T_2)|
d\xi\right)^2\nonumber\\
&+\left(\int_{-h}^h|\nabla v_1|d\xi\right)^2|
\nabla(v_1-v_2)|^2+\left(\int_{-h}^h|\nabla(v_1-v_2)|d\xi\right)^2|\nabla v_2|^2
\nonumber\\
&+\left(\int_{-h}^h|\nabla v_1|d\xi\right)^2|
\nabla^2(v_1-v_2)|^2+\left(\int_{-h}^h|\nabla(v_1-v_2)|d\xi\right)^2|\nabla^2 v_2|^2
\nonumber\\
&+\left(\int_{-h}^h|\nabla^2 v_1|d\xi\right)^2|
\nabla(v_1-v_2)|^2+\left(\int_{-h}^h|\nabla^2(v_1-v_2)|d\xi\right)^2|\nabla v_2|^2\bigg]dxdydz\nonumber\\
\leq&C[\|v_1\|_\infty\|\nabla(v_1-v_2)\|_2^2+\|v_1-v_2\|_\infty^2\|\nabla v_2\|_2^2+\|v_1\|_\infty^2\|\nabla^2(v_1-v_2)\|_2^2\nonumber\\
&+\|v_1-v_2\|_\infty^2\|\nabla^2v_2\|_2^2+\|\nabla(v_1-v_2)\|_4^2(\|\nabla v_1\|_4+\|\nabla v_2\|_4^2)+\|\nabla(T_1-T_2)\|_{H^1}^2\nonumber\\
&+\|\nabla v_1\|_6^2\|\nabla(v_1-v_2)\|_2\|\nabla(v_1-v_2)\|_6+\|\nabla(v_1-v_2)\|_6^2\|\nabla v_2\|_2\|\nabla v_2\|_6\nonumber\\
&+\|\nabla v_1\|_6^2\|\nabla^2(v_1-v_2)\|_2\|\nabla^2(v_1-v_2)\|_6+\|\nabla(v_1-v_2)\|_6^2\|\nabla^2v_2\|_2
\|\nabla^2v_2\|_6\nonumber\\
&+\|\nabla^2v_1\|_2\|\nabla^2v_1\|_6\|\nabla(v_1-v_2)\|_6^2+\|\nabla^2(v_1-v_2)\|_2\|\nabla^2
(v_1-v_2)\|_6\|\nabla v_2\|_6^2\nonumber\\
\leq&C[(\|v_1\|_{H^2}^2+\|v_2\|_{H^2}^2)\|v_1-v_2\|_{H^2}^2+\|v_1\|_{H^2}^2\|v_1-v_2
\|_{H^2}\|v_1-v_2\|_{H^3}\nonumber\\
&+\|v_2\|_{H^2}\|v_2\|_{H^3}\|v_1-v_2\|_{H^2}^2+\|v_1\|_{H^2}\|v_1\|_{H^3}\|v_1-v_2\|_{H^2}^2
\nonumber\\
&+\|v_2\|_{H^2}^2\|v_1-v_2\|_{H^2}\|v_1-v_2\|_{H^3}+\|T_1-T_2\|_{H^2}^2]\nonumber\\
\leq&C[(\|v_1\|_{H^2}^2
+\|v_2\|_{H^2}^2)\|v_1-v_2\|_{H^2}\|v_1-v_2\|_{H^3}+\|T_1-T_2\|_{H^2}^2\nonumber\\
&+(\|v_1\|_{H^2}\|v_1\|_{H^3}+\|v_2\|_{H^2}\|v_2\|_{H^3})\|v_1-v_2\|_{H^2}^2]\label{D}
\end{align}
and
\begin{align}
&\int_\Omega(|C(v_1,T_1)-C(v_2,T_2)|^2+|\nabla(C(v_1,T_1)-C(v_2,T_2))|^2)dxdydz\nonumber\\
\leq&C\int_\Omega\bigg[|v_1|^2|\nabla(T_1-T_2)|^2+|v_1-v_2|^2|\nabla T_2|^2+|\nabla v_1|^2|\nabla(T_1-T_2)|^2\nonumber\\
&+|\nabla(v_1-v_2)|^2|\nabla T_2|^2+|v_1|^2|\nabla^2(T_1-T_2)|^2+|v_1-v_2|^2|\nabla^2T_2|^2\nonumber\\
&+\left(\int_{-h}^h|\nabla v_1|d\xi\right)^2|\nabla(T_1-T_2)|^2+\left(\int_{-h}^h|\nabla(v_1-v_2)|d\xi\right)^2(|\nabla T_2|^2+1)\nonumber\\
&+\left(\int_{-h}^h|\nabla^2v_1|d\xi\right)^2|\nabla(T_1-T_2)|^2+\left(\int_{-h}^h|\nabla^2
(v_1-v_2)|d\xi\right)^2(|\nabla T_2|^2+1)\nonumber\\
&+\left(\int_{-h}^h|\nabla v_1|d\xi\right)^2|\nabla^2(T_1-T_2)|^2+\left(\int_{-h}^h|\nabla(v_1-v_2)d\xi\right)^2|\nabla^2T_2
|^2\bigg]dxdydz\nonumber\\
\leq&C\big[\|v_1\|_\infty\|\nabla(T_1-T_2)\|_2^2+\|v_1-v_2\|_\infty^2\|\nabla T_2\|_2^2+\|\nabla
v_1\|_4^2\|\nabla(T_1-T_2)\|_4^2\nonumber\\
&+\|\nabla(v_1-v_2)\|_4^2\|\nabla T_2\|_4^2+\|v_1\|_\infty^2\|\nabla^2(T_1-T_2)\|_2^2+\|v_1-v_2\|_\infty^2\|\nabla^2 T_2\|_2^2\nonumber\\
&+\|\nabla v_1\|_4^2\|\nabla(T_1-T_2)\|_4^2+\|\nabla(v_1-v_2)\|_4^2\|\nabla T_2\|_4^2
+\|\nabla(v_1-v_2)\|_2^2\nonumber\\
&+\|\nabla^2v_1\|_2\|\nabla^2v_1\|_6\|\nabla(T_1-T_2)\|_6^2+\|\nabla^2(v_1-v_2)\|_2^2\nonumber\\
&+\|\nabla^2(v_1-v_2)\|_2\|
\nabla^2(v_1-v_2)\|_6\|\nabla T_2\|_6^2+\|\nabla(v_1-v_2)\|_6^2\|\nabla^2T_2\|_2\|
\nabla^2T_2\|_6\nonumber\\
&+\|\nabla v_1\|_6^2\|\nabla^2(T_1-T_2)\|_2\|\nabla^2(T_1-T_2)\|_6
\big]\nonumber\\
\leq&C(\|v_1\|_{H^2}^2\|T_1-T_2\|_{H^2}^2+\|v_1-v_2\|_{H^2}^2\|T_2\|_{H^2}^2+\|v_1-v_2\|_{H^2}^2
\nonumber\\
&+\|v_1\|_{H^2}\|v_1\|_{H^3}\|T_1-T_2\|_{H^2}^2+\|v_1-v_2\|_{H^2}\|v_1-v_2\|_{H^3}\|T_2\|_{H^2}^2
\nonumber\\
&+\|v_1-v_2\|_{H^2}^2\|T_2\|_{H^2}\|T_2\|_{H^3}+\|v_1\|_{H^2}^2\|T_1
-T_2\|_{H^2}\|T_1-T_2\|_{H^3})
\nonumber\\
\leq&C[(\|v_1\|_{H^2}^2+\|T_2\|_{H^2}^2)(\|v_1-v_2\|_{H^2}\|v_1-v_2\|_{H^3}+\|T_1-T_2\|_{H^2}
\|T_1-T_2\|_{H^3})\nonumber\\
&+(\|v_1\|_{H^2}\|v_1\|_{H^3}+\|T_2\|_{H^2}\|T_2\|_{H^3}+1)(\|v_1-v_2\|_{H^2}^2
+\|T_1-T_2\|_{H^2}^2).\label{C}
\end{align}

Setting $v=v_1-v_2$ and $T=T_1-T_2$, then it follows from (\ref{D}) and (\ref{C}) that
\begin{align*}
I(t)=&\int_\Omega(|D(v_1,T_1)-D(v_2,T_2)|^2+|\nabla(D(v_1,T_1)-D(v_2,T_2))|^2)dxdydz\nonumber\\
&+\int_\Omega(|C(v_1,T_1)-C(v_2,T_2)|^2+|\nabla(C(v_1,T_1)-C(v_2,T_2))|^2)dxdydz\nonumber\\
\leq&C(\|v_1\|_{H^2}\|v_1\|_{H^3}+\|v_2\|_{H^2}\|v_2\|_{H^3}+\|T_2\|_{H^2}\|T_2\|_{H^3}+1)(\|v
\|_{H^2}^2+\|T\|_{H^2}^2)
\nonumber\\
&+C(\|v_1\|_{H^2}^2+\|v_2\|_{H^2}^2+\|T_2\|_{H^2}^2)(\|T\|_{H^2}\|T\|_{H^3}+\|v\|_{H^2}\|v\|_{H^3}).
\end{align*}
Thus, for any $(v_1,T_1),(v_2,T_2)\in\mathbb B_K$, it holds that
\begin{align}
\int_0^{t_0}I(t)dt\leq&CK\|(v,T)\|_{\mathcal M_{t_0}}^2\int_0^{t_0}(\|v_1\|_{H^3}+\|v_2\|_{H^3}+\|T_2\|_{H^3}+1)dt\nonumber\\
&+CK^2\|(v,T)\|_{\mathcal M_{t_0}}\int_0^{t_0}(\|T\|_{H^3}+\|v\|_{H^3})dt\nonumber\\
\leq&CK\|(v,T)\|_{\mathcal M_{t_0}}^2 t_0^{1/2}\left[\int_0^{t_0}(\|v_1\|_{H^3}^2+\|v_2\|_{H^3}^2+\|T_2\|_{H^3}^2+1)dt\right]
^{1/2}\nonumber\\
&+CK^2\|(v,T)\|_{\mathcal M_{t_0}}t_0^{1/2}\left[\int_0^{t_0}(\|T\|_{H^3}^2+\|v\|_{H^3}^2)dt\right]^{1/2}\nonumber\\
\leq&CK^2t_0^{1/2}\|(v,T)\|_{\mathcal M_{t_0}}^2.\label{2.19}
\end{align}

Setting $\mathcal V=\mathcal V_1-\mathcal V_2$ and $\mathcal T=\mathcal T_1-\mathcal T_2$, then $(\mathcal V,\mathcal T)$ satisfies
\begin{eqnarray}
&\partial_t\mathcal V+L_1\mathcal V+\nabla_Hp(x,y,t)=D(v_1,T_1)-D(v_2,T_2),\label{2.22}\\
&\nabla_H\cdot\overline{\mathcal V}=0,\label{2.21}\\
&\partial_t\mathcal T-\varepsilon\Delta_H\mathcal T+L_2\mathcal T=C(v_1,T_1)-C(v_2,T_2)\label{2.23}
\end{eqnarray}
and boundary and initial conditions
\begin{eqnarray*}
&\mathcal V\mbox{ and }\mathcal T\mbox{ are periodic in }x,y,z,\\
&\mathcal V\mbox{ and }\mathcal T\mbox{ are even and odd in }z, \mbox{ respectively},\\
&(\mathcal V,\mathcal T)|_{t=0}=(0, 0).
\end{eqnarray*}

Recalling (\ref{nw3}), it is obviously that $\partial_t\mathcal V, \partial_t\mathcal T\in L^2(0,t_0; H^1(\Omega))$.
Multiplying (\ref{2.22}) and (\ref{2.23}) by $\mathcal V$ and $\mathcal T$, respectively,
and summing the resulting equations up, then it follows from integrating by parts that
\begin{align}
&\frac{1}{2}\frac{d}{dt}\int_\Omega(|\mathcal V|^2+|\mathcal T|^2)dxdydz\nonumber\\
&+\int_\Omega\bigg(\frac{1}{R_1}|\nabla_H\mathcal V|^2+\frac{1}{R_2}|\partial_z\mathcal V|^2+\varepsilon|\nabla_H\mathcal T|^2+\frac{1}{R_3}|\partial_z\mathcal T|^2\bigg)dxdydz\nonumber\\
=&\int_\Omega[(D(v_1,T_1)-D(v_2,T_2))\mathcal V+(C(v_1,T_1)-C(v_2,T_2))\mathcal T]dxdydz.\label{one}
\end{align}
Applying the operator $\nabla$ to equations (\ref{2.22}), (\ref{2.23}) and multiplying the resulting equations by $-\nabla\Delta\mathcal V$ and $-\nabla\Delta\mathcal T$, respectively,
and summing these equations up, then it follows from Lemma \ref{lem2.0} that
\begin{align*}
&\frac{1}{2}\frac{d}{dt}\int_\Omega(|\Delta\mathcal V|^2+|\Delta\mathcal T|^2)dxdydz\\
&+\int_\Omega\bigg(\frac{1}{R_1}|\nabla_H\Delta\mathcal V|^2+\frac{1}{R_2}|\partial_z\Delta\mathcal V|^2+\varepsilon|\nabla_H\Delta\mathcal T|^2+\frac{1}{R_3}|\partial_z\Delta\mathcal T|^2\bigg)dxdydz\\
=&-\int_\Omega[\nabla(D(v_1,T_1)-D(v_2,T_2))\nabla\Delta\mathcal V+\nabla(C(v_1,T_1)-C(v_2,T_2))\nabla\Delta\mathcal T]dxdydz.
\end{align*}
Summing this equation with (\ref{one}) up, and using the Cauchy-Schwarz inequality, one obtains
\begin{align*}
&\frac{1}{2}\frac{d}{dt}\int_\Omega(|\mathcal V|^2+|\Delta \mathcal V|^2+|\mathcal T|^2+|\Delta\mathcal T|^2)dxdudz\nonumber\\
&+\int_\Omega\left[\frac{1}{R_1}(|\nabla_H\mathcal V|^2+|\nabla_H\Delta \mathcal V|^2)+\frac{1}{R_2}(|\partial_z\mathcal V|^2+|\partial_z\Delta\mathcal  V|^2)\right.\nonumber\\
&\left.+\varepsilon(|\nabla_H\mathcal T|^2+|\nabla_H\Delta\mathcal T|^2)+\frac{1}{R_3}(|\partial_z\mathcal T|^2+|\partial_z\Delta\mathcal T|^2)\right]dxdydz\nonumber\\
=&\int_\Omega\big[(D(v_1,T_1)-D(v_2,T_2))\mathcal V-\nabla(D(v_1,T_1)-D(v_2,T_2))\nabla\Delta \mathcal V\nonumber\\
&+(C(v_1,T_1)-C(v_2,T_2))\mathcal T-\nabla(C(v_1,T_1)-C(v_2,T_2))\nabla\Delta\mathcal T\big]dxdydz\nonumber\\
\leq&\sigma\int_\Omega(|\mathcal V|^2+|\nabla\Delta\mathcal  V|^2+|\mathcal T|^2+|\nabla\Delta\mathcal T|^2)dxdydz\nonumber\\
&+C\int_\Omega(|D(v_1,T_1)-D(v_2,T_2)|^2+|\nabla(D(v_1,T_1)
-D(v_2,T_2))|^2)dxdydz\nonumber\\
&+C\int_\Omega(|C(v_1,T_1)-C(v_2,T_2)|^2+|\nabla(C(v_1,T_1)
-C(v_2,T_2))|^2)dxdydz\nonumber\\
\leq&\gamma\int_\Omega(|\mathcal V|^2+|\nabla\Delta\mathcal  V|^2+|\mathcal T|^2+|\nabla\Delta\mathcal T|^2)dxdydz+CI(t),
\end{align*}
where $\gamma=\min\left\{\frac{1}{4R_1},\frac{1}{4R_2}\right\}$.
On account of (\ref{2.19}), it follows from the above inequality that
\begin{align*}
&\sup_{0\leq t\leq t_0}(\|\mathcal V\|_{H^2}^2+\|\mathcal T\|_{H^2}^2)+\int_0^{t_0}
(\|\nabla \mathcal V\|_{H^2}^2+\|\nabla\mathcal T\|_{H^2}^2)dt\nonumber\\
\leq&C\int_0^{t_0}I(t)dt\leq CK^2t_0^{1/2}\|(v,T)\|_{\mathcal M_{t_0}}^2,
\end{align*}
which gives
$$
\|\mathfrak F(v_1,T_1)-\mathfrak F(v_2,T_2)\|_{\mathcal M_{t_0}}^2\leq CK^2t_0^{1/2}\|(v,T)\|_{\mathcal M_{t_0}}^2,
$$
proving the conclusion.
\end{proof}

\begin{proposition}\label{lem2.2}
There is a positive constant $K_0$ depending only on $R_1,R_2,R_3,h,\varepsilon$ and $(v_0,T_0)$, such that
$$
\|\mathfrak F(v,T)\|_{\mathcal M_{t_\varepsilon}}\leq 2K_0,
$$
for any $(v,T)$ with $\|(v,T)\|_{\mathcal M_{t_\varepsilon}}\leq 2K_0$, where $t_\varepsilon=\min\left\{(4C_\varepsilon K_0)^{-4},1\right\}$ and $C_\varepsilon$ is the same constant as in Proposition \ref{lem2.1}.
\end{proposition}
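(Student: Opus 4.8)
The plan is to deduce the self-mapping property of $\mathfrak F$ on the ball $\mathbb B_{2K_0}$ from the Lipschitz estimate already established in Proposition \ref{lem2.1}, combined with a single bound on the ``base point'' $\mathfrak F(0,0)$. Concretely, I would choose $K_0$ large enough that $\|\mathfrak F(0,0)\|_{\mathcal M_{t_0}}\le K_0$ for every $t_0\le 1$, and then control $\mathfrak F(v,T)-\mathfrak F(0,0)$ by the contraction estimate. First I would observe, by inspecting the definitions (\ref{2.10})--(\ref{2.12}), that every summand of $A$, $B$ and $E$ carries at least one factor linear in $(v,T)$---including the $1/h$ term in $E$, which is always multiplied by $\int_{-h}^z\nabla_H\cdot v(x,y,\xi,t)\,d\xi$---so that $A(0,0)=B(0,0)=E(0,0)=0$. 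Hence $\mathfrak F(0,0)=(\mathcal V_0,\mathcal T_0)$ is exactly the solution of the \emph{homogeneous} linear system (\ref{2.2})--(\ref{2.5}) with zero right-hand sides and initial data $(\bar v_0,\tilde v_0,T_0)$.

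Next I would invoke the standard $L^2$ maximal-regularity and energy estimates for the linear Stokes system and the linear parabolic equations (the same theory already used to make $\mathfrak F$ well defined). Testing the homogeneous equations against the appropriate high-order derivatives yields, for every $t_0\le 1$,
\[
\sup_{0\le t\le t_0}\big(\|\mathcal V_0\|_{H^2}^2+\|\mathcal T_0\|_{H^2}^2\big)+\int_0^{t_0}\big(\|\mathcal V_0\|_{H^3}^2+\|\mathcal T_0\|_{H^3}^2\big)\,dt\le C_*\big(\|v_0\|_{H^2}^2+\|T_0\|_{H^2}^2\big),
\]
with $C_*$ depending only on $R_1,R_2,R_3,h,\varepsilon$; the constant is uniform in $t_0\le 1$ because the dissipation absorbs the $L^2_tH^3_x$ norm and the lower-order growth is harmless over a unit time interval. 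I would then \emph{define}
\[
K_0:=\max\Big\{1,\ \sqrt{C_*\big(\|v_0\|_{H^2}^2+\|T_0\|_{H^2}^2\big)}\Big\},
\]
so that $K_0\ge 1$ and $\|\mathfrak F(0,0)\|_{\mathcal M_{t_0}}\le K_0$ for all $0<t_0\le 1$.

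Finally I would combine the two ingredients. Fix $(v,T)$ with $\|(v,T)\|_{\mathcal M_{t_\varepsilon}}\le 2K_0$. Since $K=2K_0\ge 1$, $0<t_\varepsilon\le 1$, and both $(v,T)$ and $(0,0)$ lie in $\mathbb B_{2K_0}$, Proposition \ref{lem2.1} applies with $(v_2,T_2)=(0,0)$ and gives
\[
\|\mathfrak F(v,T)-\mathfrak F(0,0)\|_{\mathcal M_{t_\varepsilon}}\le C_\varepsilon\,(2K_0)\,t_\varepsilon^{1/4}\,\|(v,T)\|_{\mathcal M_{t_\varepsilon}}\le 4C_\varepsilon K_0^2\,t_\varepsilon^{1/4}.
\]
By the choice $t_\varepsilon=\min\{(4C_\varepsilon K_0)^{-4},1\}$ one has $t_\varepsilon^{1/4}\le(4C_\varepsilon K_0)^{-1}$, whence the right-hand side is at most $K_0$. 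The triangle inequality then yields
\[
\|\mathfrak F(v,T)\|_{\mathcal M_{t_\varepsilon}}\le\|\mathfrak F(v,T)-\mathfrak F(0,0)\|_{\mathcal M_{t_\varepsilon}}+\|\mathfrak F(0,0)\|_{\mathcal M_{t_\varepsilon}}\le K_0+K_0=2K_0,
\]
which is the desired conclusion.

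The only genuinely substantive step is the uniform-in-$t_0$ linear estimate for $\mathfrak F(0,0)$; everything else is the triangle inequality and arithmetic with the constant $C_\varepsilon$ furnished by Proposition \ref{lem2.1}. I expect the main care to be needed in verifying that the $L^2(0,t_0;H^3)$ part of the linear bound does not blow up as $t_0\to 0$ and that its constant, while permitted to depend on $\varepsilon$, is independent of $t_0$ for $t_0\le 1$; this is precisely what the parabolic smoothing of the fully dissipative regularized system---with the extra $\varepsilon\Delta_H$ term in the temperature equation---provides.
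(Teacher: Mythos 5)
Your proposal is correct and follows essentially the same route as the paper: the paper likewise obtains $\|\mathfrak F(0,0)\|_{\mathcal M_{t_0}}\leq K_0$ uniformly for $0<t_0\leq1$ from the $L^2$ theory of the Stokes and linear parabolic equations, and then concludes via the triangle inequality and the Lipschitz bound of Proposition \ref{lem2.1} with exactly the same arithmetic $K_0+4C_\varepsilon K_0^2t_\varepsilon^{1/4}\leq 2K_0$. Your added verification that $A(0,0)=B(0,0)=E(0,0)=0$ (including the $1/h$ term in $E$) and that the linear estimate is uniform in $t_0\leq1$ merely makes explicit what the paper leaves implicit.
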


\begin{proof}
Recalling the definition of $\mathfrak F$, the $L^2$ theory of Stokes equations and linear
parabolic equations provide that there is a constant $K_0\geq1$, depending only on $R_1,R_2,R_3,h,\varepsilon$ and $(v_0,T_0)$, such that for any $0<t_0\leq 1$, one has
$$
\|\mathfrak F(0,0)\|_{\mathcal M_{t_0}}\leq K_0.
$$
By the aid of this estimate, applying Proposition \ref{lem2.1}, for any $(v,T)$, with $\|(v,T)\|_{\mathcal M_{t_\varepsilon}}\leq 2K_0$, it holds that
\begin{align*}
\|\mathfrak F(v,T)\|_{\mathcal M_{t_\varepsilon}}\leq&\|\mathfrak F(0,0)\|_{\mathcal M_{t_\varepsilon}}+\|\mathfrak F(v,T)-\mathfrak F(0,0)\|_{\mathcal M_{t_\varepsilon}}\\
\leq&K_0+2C_\varepsilon K_0t_\varepsilon^{1/4}\|(v,T)\|_{M_{t_\varepsilon}}\\
\leq&K_0+4C_\varepsilon K_0^2t_\varepsilon^{1/4}\leq 2K_0,
\end{align*}
provided $t_\varepsilon\leq\min\left\{(4C_\varepsilon K_0)^{-4},1\right\}$.
This completes the
proof.
\end{proof}

Now we are ready to give the proof of Proposition \ref{prop1}.
\begin{proof}[\textbf{Proof of Proposition \ref{prop1}}]
Let $K_0$ be the constant stated in Proposition \ref{lem2.2}. By Lemma
\ref{lem2.1} and Proposition \ref{lem2.2}, the mapping defined by (\ref{2.1})
satisfies
\begin{eqnarray*}
&\mathfrak F: \mathbb B_{2K_0}\rightarrow\mathbb B_{2K_0},\\
&\|\mathfrak F(v_1,T_1)-\mathfrak F(v_2,T_2)\|_{\mathcal M_{t_\varepsilon}}\leq\frac{1}{2}\|(v_1-v_2,T_1-T_2)\|_{\mathcal M_{t_\varepsilon}},
\end{eqnarray*}
for any $(v_1,T_1), (v_2,T_2)\in \mathbb B_{2K_0}$, where
$$
t_\varepsilon=\min\left\{(4C_\varepsilon K_0)^{-4},1\right\},\quad\mathbb B_{2K_0}=\big\{(v,T)\big|\|(v,T)\|_{\mathcal M_{t_\varepsilon}}\leq2K_0\big\}.
$$
By the contraction mapping principle, there is a unique fixed point $(v,T)$ for $\mathfrak F$ in $\mathbb B_{2K_0}$. This fixed point of $\mathfrak F$ is a strong solution to system (\ref{2.0-1})--(\ref{2.0-7}). The regularities $\partial_tv,\partial_tT\in L^2(0,t_\varepsilon; H^1(\Omega))$ follow from (\ref{nw3}), completing the
proof of Proposition \ref{prop1}.
\end{proof}

\section{The system with only vertical diffusion}\label{sec3}

In this section, we prove the local existence and uniqueness of strong solution to system (\ref{1.1-1})--(\ref{1.9-1}), or equivalently system (\ref{1.10})--(\ref{1.16}). The existence is obtained by taking the limit
$\varepsilon\rightarrow0$ of the solutions $(v_\varepsilon, T_\varepsilon)$
to system (\ref{2.0-1})--(\ref{2.0-7}).

We first establish the uniform in $\varepsilon$ lower bounds of the existence times and the estimates on $(v_\varepsilon, T_\varepsilon)$. In fact, we have the following:

\begin{proposition}\label{lem3.1}
There is a positive constant $K_0$ and a positive time $t_0^*$, depending only on $R_1,R_2,R_3,h$ and
$(v_0,T_0)$, such that system (\ref{2.0-1})--(\ref{2.0-7}) has a solution $(v_\varepsilon, T_\varepsilon)$ in $\Omega\times(0,t_0^*)$ with
$$
\sup_{0\leq t\leq t_0^*}(\|v_\varepsilon\|_{H^2}^2+\|T_\varepsilon\|_{H^2}^2)+\int_0^{t_0^*}(\varepsilon\|\nabla_H
T_\varepsilon\|_{H^2}^2+\|\partial_zT_\varepsilon\|_{H^2}^2+\|\nabla v_\varepsilon\|_{H^2}^2)dt\leq K
$$
and
$$
\int_0^{t_0^*}(\|\partial_tv_\varepsilon\|_{H^1}^2+\|\partial_t T_\varepsilon\|_{H^1}^2)dt\leq K,
$$
where $K$ is a positive constant depending only on $R_1,R_2,R_3,h$ and
$(v_0,T_0)$.
\end{proposition}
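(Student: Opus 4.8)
The goal is to obtain uniform-in-$\varepsilon$ a priori estimates and a uniform lower bound on the existence time for the regularized solutions $(v_\varepsilon,T_\varepsilon)$ produced by Proposition \ref{prop1}. The plan is to work directly with the regularized system (\ref{2.0-1})--(\ref{2.0-7}), perform $H^2$ energy estimates, and show that all the $\varepsilon$-dependent terms appear with the \emph{favorable} sign (as dissipation on the left-hand side), so they can simply be discarded when deriving the differential inequality that controls $\|v_\varepsilon\|_{H^2}^2+\|T_\varepsilon\|_{H^2}^2$. Concretely, I would set $\Phi(t)=\|v_\varepsilon\|_{H^2}^2+\|T_\varepsilon\|_{H^2}^2$ and aim to derive an inequality of the Riccati form $\frac{d}{dt}\Phi \le C\,P(\Phi)$ with $P$ a polynomial (here one expects roughly $\frac{d}{dt}\Phi\le C(1+\Phi)^3$ after absorbing the highest derivatives by the dissipation), from which a uniform existence time $t_0^*$ and a uniform bound $K$ follow by a standard comparison/bootstrap argument. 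Since $\varepsilon\|\nabla_H T_\varepsilon\|_{H^2}^2$ only adds dissipation, every bound will be independent of $\varepsilon$.

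I would carry out the estimates in stages, mirroring the structure of Proposition \ref{lem2.1} but now for a single solution. First, the basic $L^2$ energy estimate: multiply (\ref{2.0-1}) by $v_\varepsilon$ and (\ref{2.0-3}) by $T_\varepsilon$, integrate over $\Omega$, and use that $(v\cdot\nabla_H)v - (\int_{-h}^z\nabla_H\cdot v\,d\xi)\partial_z v$ together with the divergence-free structure of $\bar v$ yields cancellation of the transport terms; the Coriolis term $f_0k\times v$ is orthogonal to $v$; the only genuinely new term is $\frac{1}{h}\int_{-h}^z\nabla_H\cdot v\,d\xi$ in the temperature equation, which is controlled by Cauchy--Schwarz and absorbed. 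Second, the $H^2$ estimate: apply $\nabla$ (and then $\Delta$) to the equations, multiply by $-\nabla\Delta v_\varepsilon$ and $-\nabla\Delta T_\varepsilon$ respectively, and invoke Lemma \ref{lem2.0} to justify the integration-by-parts and the $\frac{d}{dt}$ identities at this low regularity. The dissipative terms $\frac{1}{R_1}\|\nabla_H\Delta v\|_2^2$, $\frac{1}{R_2}\|\partial_z\Delta v\|_2^2$, $\varepsilon\|\nabla_H\Delta T\|_2^2$, $\frac{1}{R_3}\|\partial_z\Delta T\|_2^2$ land on the left; the task is then to bound the nonlinear right-hand side by these plus lower-order terms.

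The main obstacle, and the genuinely delicate point, is the anisotropy of the dissipation: the temperature equation has \emph{no} horizontal diffusion once $\varepsilon$ is dropped, so when estimating the cubic transport terms I cannot rely on $\|\nabla_H\Delta T\|_2$ to absorb top-order horizontal temperature derivatives in the limit. The resolution is to exploit the hydrostatic structure exactly as in \cite{CAOTITI3}: the horizontal velocity carries full dissipation in all directions (both $\Delta_H$ and $\partial_z^2$), the vertical velocity $w=-\int_{-h}^z\nabla_H\cdot v\,d\xi$ is controlled by $v$, and the dangerous temperature terms can be handled by integrating by parts in $z$ to trade a horizontal derivative on $T$ for a $\partial_z$-derivative, using anisotropic Ladyzhenskaya/Agmon-type inequalities (e.g. $\|f\|_\infty\le C\|f\|_{H^1_z}^{1/2}\|f\|_{H^2_{xy}}^{1/2}$-style bounds) so that every term is dominated by the available dissipation $\|\nabla v\|_{H^2}$ and $\|\partial_z T\|_{H^2}$ together with products of lower norms. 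After splitting $v=\bar v+\tilde v$ and using the barotropic/baroclinic system (\ref{1.17})--(\ref{1.19}) where convenient, I would collect everything into the Riccati inequality, apply Young's inequality to move the highest-order factors under the dissipation, and integrate.

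Finally, from $\frac{d}{dt}\Phi\le C(1+\Phi)^3$ with $\Phi(0)=\|v_0\|_{H^2}^2+\|T_0\|_{H^2}^2$ I obtain a time $t_0^*$ depending only on $R_1,R_2,R_3,h$ and the initial data (not on $\varepsilon$) on which $\Phi(t)\le K$, and the time-integrated dissipation bound $\int_0^{t_0^*}(\varepsilon\|\nabla_H T_\varepsilon\|_{H^2}^2+\|\partial_z T_\varepsilon\|_{H^2}^2+\|\nabla v_\varepsilon\|_{H^2}^2)\,dt\le K$ comes directly from integrating the energy inequality in time. The bound on $\partial_t v_\varepsilon,\partial_t T_\varepsilon$ in $L^2(0,t_0^*;H^1)$ is then read off by solving the equations (\ref{2.0-1}) and (\ref{2.0-3}) for the time derivatives and estimating each nonlinear term in $H^1$ using the already-established control of $\|v_\varepsilon\|_{H^2}$, $\|T_\varepsilon\|_{H^2}$ and the integrated $H^3$-type dissipation, again uniformly in $\varepsilon$ since the $\varepsilon\Delta_H T$ term contributes $\varepsilon\|\Delta_H T\|_{H^1}$ which is controlled by the uniform dissipation bound.
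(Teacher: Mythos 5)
Your proposal follows essentially the same route as the paper's proof: uniform-in-$\varepsilon$ $H^2$ energy estimates on the regularized system, justified by Lemma \ref{lem2.0}, with the $\varepsilon$-diffusion kept only as favorable dissipation; the key anisotropic step of integrating by parts in $z$ to shift a vertical derivative off $T_\varepsilon$ in the worst transport term, so that it is absorbed by $\frac{1}{R_3}\|\partial_z\Delta T_\varepsilon\|_2^2$ and the full velocity dissipation via the Gagliardo--Nirenberg-type bounds (\ref{A11})--(\ref{A12}); and the cubic Riccati inequality $\frac{d}{dt}\Phi\leq C(1+\Phi)^3$ closed by comparison to produce an $\varepsilon$-independent time $t_0^*$ and bound $K$ (the paper additionally runs the maximal-existence-time argument to conclude $t_\varepsilon^*>t_0^*$, which your sketch should make explicit since Proposition \ref{prop1} only guarantees existence on an $\varepsilon$-dependent interval). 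One small remark: in the velocity estimate the temperature enters only through $\nabla_H\int_{-h}^zT_\varepsilon\,d\xi$, so it is controlled by $\|T_\varepsilon\|_{H^2}$ alone; the anisotropic trading of derivatives is needed only in the temperature equation, exactly where you locate it.

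There is, however, one step in your sketch that would fail as written: the bound on $\partial_tv_\varepsilon$ cannot be ``read off by solving (\ref{2.0-1}) for the time derivative,'' because (\ref{2.0-1}) contains the unknown surface pressure gradient $\nabla_Hp_s(x,y,t)$, which is not among the quantities you have estimated. The paper resolves this by splitting $v_\varepsilon=\bar v_\varepsilon+\tilde v_\varepsilon$: the baroclinic part satisfies the pressure-free equation (\ref{E4}), from which $\partial_t\tilde v_\varepsilon$ is estimated directly, while the barotropic part satisfies a two-dimensional Stokes-type system to which the $L^2$ theory of the Stokes equations applies, yielding the $H^1(M)$ bound on $\partial_t\bar v_\varepsilon$ (equivalently, one recovers $\nabla_Hp_s$ from the elliptic problem obtained by vertically averaging the momentum equation and taking the horizontal divergence). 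Since you already invoke the barotropic--baroclinic decomposition (\ref{1.17})--(\ref{1.19}) for the nonlinear estimates, this is a fixable omission rather than a wrong approach, but the pressure must be dealt with explicitly before the $\partial_tv_\varepsilon$ bound can be claimed.
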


\begin{proof}
Let $t_\varepsilon^*$ be the maximal existence time of strong solution $(v_\varepsilon, T_\varepsilon)$ to system (\ref{2.0-1})--(\ref{2.0-7}).
Multiplying (\ref{2.0-1}) by $v_\varepsilon$ and integrating over $\Omega$ yields
\begin{align*}
\frac{1}{2}\frac{d}{dt}\int_\Omega|v_\varepsilon|^2dxdydz
&+\int_\Omega
\left(\frac{1}{R_1}|\nabla_Hv_\varepsilon|^2+\frac{1}{R_2}|\partial_zv_\varepsilon|^2\right)dxdydz
\nonumber\\
=&\int_\Omega\nabla_H\left(\int_{-h}^zT_\varepsilon d\xi\right)v_\varepsilon dxdydz.
\end{align*}
Applying the operator $\nabla$ to (\ref{2.0-1}), multiplying the resulting equation by $-\nabla\Delta v_\varepsilon$, summing them up and integrating over $\Omega$, then it follows from Lemma \ref{lem2.0} (recall the regularities of $(v_\varepsilon, T_\varepsilon)$) that
\begin{align*}
&\frac{1}{2}\frac{d}{dt}\int_\Omega|\Delta v_\varepsilon|^2dxdydz+\int_\Omega
\left(\frac{1}{R_1}|\nabla_H\Delta v_\varepsilon|^2+\frac{1}{R_2}|\partial_z\Delta v_\varepsilon|^2\right)dxdydz\nonumber\\
=&\int_\Omega\nabla\left[(
v_\varepsilon\cdot\nabla_H)v_\varepsilon-\left(\int_{-h}^z\nabla_H\cdot v_\varepsilon d\xi\right)
\partial_zv_\varepsilon
-\left(\int_{-h}^z\nabla_HT_\varepsilon  d\xi\right)\right]\nabla\Delta v_\varepsilon dxdydz.
\end{align*}
Summing the above equation with the previous one up, and using
the H\"older, Sobolev, Poincar\'e and Cauchy inequalities, we have
\begin{align}
&\frac{1}{2}\frac{d}{dt}\int_\Omega(|v_\varepsilon|^2+|\Delta v_\varepsilon|^2)dxdydz\nonumber\\
&+\int_\Omega
\left[\frac{1}{R_1}(|\nabla_Hv_\varepsilon|^2+|\nabla_H\Delta v_\varepsilon|^2)\right.+\left.\frac{1}{R_2}(|\partial_zv_\varepsilon|^2+|\partial_z\Delta v_\varepsilon|^2)\right]dxdydz\nonumber\\
=&\int_\Omega\left\{\nabla_H\left(\int_{-h}^zT_\varepsilon d\xi\right)v_\varepsilon+\nabla\left[(
v_\varepsilon\cdot\nabla_H)v_\varepsilon-\left(\int_{-h}^z\nabla_H\cdot v_\varepsilon d\xi\right)
\partial_zv_\varepsilon\right.\right.\nonumber\\
&\left.\left.-\left(\int_{-h}^z\nabla_HT_\varepsilon(x,y,\xi,t) d\xi\right)\right]\nabla\Delta v_\varepsilon\right\}dxdydz\nonumber\\
\leq&C\|\nabla T_\varepsilon\|_2\|v_\varepsilon\|_2+C\int_\Omega\left[|v_\varepsilon||\nabla^2v_\varepsilon|+
|\nabla v_\varepsilon|^2+\left(\int_{-h}^h|\nabla^2v_\varepsilon|d\xi\right)|\partial_zv_\varepsilon|
\right.\nonumber\\
&\left.+\left(\int_{-h}^h|\nabla v_\varepsilon|d\xi\right)|\nabla^2v_\varepsilon|+\left(\int_{-h}^h|\nabla^2 T_\varepsilon|d\xi\right)\right]|\nabla\Delta v_\varepsilon|dxdydz\nonumber\\
\leq&\sigma\int_\Omega|\nabla\Delta v_\varepsilon|^2dxdydz+C\|\nabla T_\varepsilon\|_2\|v_\varepsilon\|_2+C\int_\Omega\Big[|v_\varepsilon|^2|\nabla^2v_\varepsilon|^2
+|\nabla v_\varepsilon|^4\nonumber\\
&+\Big(\int_{-h}^h|\nabla^2v_\varepsilon|dz\Big)^2|\partial_zv_\varepsilon|^2+\Big(\int_{-h}^h|
\nabla v_\varepsilon|dz\Big)^2|\nabla^2v_\varepsilon|^2+\Big(\int_{-h}^h|\nabla^2 T_\varepsilon|dz\Big)^2\Big]dxdydz\nonumber\\
\leq&\sigma\int_\Omega|\nabla\Delta v_\varepsilon|^2dxdydz+C\|\nabla T_\varepsilon\|_2\|v_\varepsilon\|_2+C(\|v_\varepsilon\|_6^2\|\nabla^2v_\varepsilon\|_2
\|\nabla^2v_\varepsilon\|_6\nonumber\\
&+\|\nabla v_\varepsilon\|_4^4+\|\nabla^2v_\varepsilon\|_2\|\nabla^2v_\varepsilon\|_6
\|\partial_zv_\varepsilon\|_6^2
+\|\nabla v_\varepsilon\|_6^2\|\nabla^2v_\varepsilon\|_2\|\nabla^2v_\varepsilon\|_6+\|\nabla^2 T_\varepsilon\|_2^2)\nonumber\\
\leq&\sigma\int_\Omega|\nabla\Delta v_\varepsilon|^2dxdydz+C\|\nabla T_\varepsilon\|_2\|v_\varepsilon\|_2+C(\|v_\varepsilon\|_{H^1}^2\|\nabla^2v_\varepsilon\|_2
\|\nabla\Delta v_\varepsilon\|_2\nonumber\\
&+\|v_\varepsilon\|_{H^2}^4+\|v_\varepsilon\|_{H^2}^3\|\nabla\Delta v_\varepsilon\|_2+\|\nabla^2 T_\varepsilon\|_2^2)\nonumber\\
\leq&2\sigma\int_\Omega|\nabla\Delta v_\varepsilon|^2dxdydz+C(1+\|v_\varepsilon\|_{H^2}^6+\|T_\varepsilon\|_{H^2}^6), \label{3.1}
\end{align}
with $\sigma=\min\left\{\frac{1}{4R_1},\frac{1}{4R_2}\right\}$, and thus we have
\begin{align}
&\sup_{0\leq s\leq t}\|v_\varepsilon\|_{H^2}^2+\int_0^t\|\nabla v_\varepsilon\|_{H^2}^2ds\nonumber\\
\leq&C\|v_0\|_{H^2}^2+C\int_0^t(1+\|T_\varepsilon\|_{H^2}^2+\|v_\varepsilon\|_{H^2}^2)^3ds
\label{3.2}
\end{align}
for any $0\leq t<t_\varepsilon^*$.

Multiplying equation (\ref{2.0-2}) by $T_\varepsilon$, then it follows from integrating by parts that
\begin{align}
&\frac{1}{2}\frac{d}{dt}\int_\Omega |T_\varepsilon|^2dxdydz+\int_\Omega\left(\varepsilon|\nabla_HT_\varepsilon|^2+\frac{1}{R_3}
|\partial_zT_\varepsilon|^2\right)dxdydz\nonumber\\
=&\frac{1}{h}\int_\Omega\left(\int_{-h}^z\nabla_H\cdot v_\varepsilon d\xi\right) T_\varepsilon dxdydz
\leq C\|T_\varepsilon\|_2\|\nabla v_\varepsilon\|_2.\label{3.3}
\end{align}
Recalling the Gagliado-Nirenberg inequality of the form
$$
\|f\|_\infty\leq C\|f\|_6^{1/2}\|f\|_{H^2}^{1/2},\qquad\forall f\in H^2(\Omega), \Omega\subseteq
\mathbb R^3,
$$
it follows from the H\"older, Sobolev and Poincar\'e inequalities that
\begin{align}
\|\partial_zT_\varepsilon\|_\infty\leq& C\|\partial_zT_\varepsilon\|_6^{1/2}(\|\nabla^2\partial_z
T_\varepsilon\|_2^{1/2}+\|\partial_zT_\varepsilon\|_2^{1/2})\nonumber\\
\leq&C\|\Delta T_\varepsilon\|_2^{1/2}(\|\Delta\partial_zT_\varepsilon\|_2^{1/2}+\|\partial_z
T_\varepsilon\|_2^{1/2})\label{A11}
\end{align}
and
\begin{align}
\|\nabla v_\varepsilon\|_\infty\leq&C\|\nabla v_\varepsilon\|_6^{1/2}(\|\nabla^3 v_\varepsilon\|_2^{1/2}+\|\nabla v_\varepsilon\|_2^{1/2})\nonumber\\
\leq&C\|\nabla^2 v_\varepsilon\|_2^{1/2}\|\nabla^3v_\varepsilon\|_2^{1/2}=C
\|\Delta v_\varepsilon\|_2^{1/2}\|\nabla\Delta v_\varepsilon\|_2^{1/2}. \label{A12}
\end{align}
Applying the operator $\nabla$ to equation (\ref{2.0-3}) and multiplying the resulting equation
by $-\nabla\Delta T_\varepsilon$ and integrating over $\Omega$, then it follows from Lemma \ref{lem2.0} (recall the regularities of ($v_\varepsilon,T_\varepsilon)$), (\ref{A11}), (\ref{A12}), the H\"older, Soblolev, Poincar\'e and Cauchy-Schwarz inequalities that
\begin{align}
&\frac{1}{2}\frac{d}{dt}\int_\Omega|\Delta T_\varepsilon|^2dxdydz+\int_\Omega\left(\varepsilon|\nabla_H\Delta T_\varepsilon|^2+\frac{1}{R_3}|\partial_z\Delta T_\varepsilon|^2\right)dxdydz\nonumber\\
=&\int_\Omega\Delta\left[\left(\int_{-h}^z\nabla_H\cdot v_\varepsilon d\xi\right)\left(\partial_zT_\varepsilon+\frac{1}{h}\right)-(v_\varepsilon\cdot\nabla_H)
T_\varepsilon\right]\Delta T_\varepsilon dxdydz\nonumber\\
=&\int_\Omega\left[\left(\int_{-h}^z\Delta\nabla_H\cdot v_\varepsilon d\xi\right)\left(\partial_zT_\varepsilon+\frac{1}{h}\right)+2\left(\int_{-h}^z\nabla\nabla_H\cdot v_\varepsilon d\xi\right)\nabla\partial_zT_\varepsilon\right.\nonumber\\
&-(\Delta v_\varepsilon\cdot\nabla_H)T_\varepsilon-2\nabla v_\varepsilon\cdot\nabla_H\nabla T_\varepsilon\Big]\Delta T_\varepsilon dxdydz\nonumber\\
=&\int_\Omega\left[\left(\int_{-h}^z\Delta\nabla_H\cdot v_\varepsilon d\xi\right)\left(\partial_zT_\varepsilon+\frac{1}{h}\right)-(\Delta v_\varepsilon\cdot\nabla_H)T_\varepsilon-2\nabla v_\varepsilon\cdot\nabla_H\nabla
T_\varepsilon\right]\Delta T_\varepsilon dxdydz\nonumber\\
&-2\int_\Omega\left[\nabla\nabla_H\cdot v_\varepsilon\nabla T_\varepsilon\Delta T_\varepsilon+\left(\int_{-h}^z\nabla\nabla_H\cdot v_\varepsilon d\xi\right)\nabla T_\varepsilon\Delta\partial_zT_\varepsilon\right]dxdydz\nonumber\\
\leq&C(\|\Delta\nabla v_\varepsilon\|_2\|\partial_zT_\varepsilon\|_\infty\|\Delta T_\varepsilon\|_2+\|\Delta\nabla v_\varepsilon\|_2\|\Delta T_\varepsilon\|_2+\|\Delta v_\varepsilon\|_3\|\nabla_HT_\varepsilon\|_6\|\Delta T_\varepsilon\|_2\nonumber\\
&+\|\nabla v_\varepsilon\|_\infty\|\nabla^2T_\varepsilon\|_2^2+\|\nabla^2v_\varepsilon\|_3\|\nabla T_\varepsilon\|_6\|\Delta T_\varepsilon\|_2+\|\nabla^2v_\varepsilon\|_3\|\nabla T_\varepsilon\|_6\|\Delta\partial_zT_\varepsilon\|_2)\nonumber\\
\leq&C\Big[\|\nabla\Delta v_\varepsilon\|_2\|\Delta T_\varepsilon\|_2^{1/2}(\|\Delta\partial_zT_\varepsilon\|_2^{1/2}
+\|\partial_zT_\varepsilon\|_2^{1/2})\|\Delta T_\varepsilon\|_2+\|\Delta\nabla v_\varepsilon\|_2\|\Delta T_\varepsilon\|_2
\nonumber\\
&+\|\Delta v_\varepsilon\|_2^{1/2}\|\nabla\Delta v_\varepsilon\|_2^{1/2}\|\Delta T_\varepsilon\|_2^2+\|\Delta v_\varepsilon\|_2^{1/2}\|\nabla\Delta v_\varepsilon\|_2^{1/2}\|\Delta T_\varepsilon\|_2\|\Delta\partial_z T_\varepsilon\|_2\Big]\nonumber\\
\leq&\sigma(\|\Delta\partial_zT_\varepsilon\|_2^2+\|\nabla\Delta v_\varepsilon\|_2^2)+C_\sigma(1+\|v_\varepsilon\|_{H^2}^6+\|T_\varepsilon\|_{H^2}^6),\label{3.4}
\end{align}
where $\sigma$ is a sufficiently small positive constant.

Combining (\ref{3.3}) with (\ref{3.4}), it follows that
\begin{align*}
&\sup_{0\leq s\leq t}\|T_\varepsilon\|_{H^2}^2+\int_0^t(\varepsilon\|\nabla_HT_\varepsilon\|_{H^2}^2+\|\partial_zT_\varepsilon
\|_{H^2}^2)ds\nonumber\\
\leq&C\|T_0\|_{H^2}^2+\sigma\int_0^t\|\nabla\Delta v_\varepsilon\|_2^2ds+C_\sigma\int_0^t(1+\|T_\varepsilon
\|_{H^2}^2+\|v_\varepsilon\|_{H^2}^2)^3ds,
\end{align*}
with a sufficiently small positive constant $\sigma$, for any $0\leq t<t_\varepsilon^*$. This, combined with (\ref{3.2}), implies
\begin{align*}
&\sup_{0\leq s\leq t}(\|v_\varepsilon\|_{H^2}^2+\|T_\varepsilon\|_{H^2}^2)+\int_0^t(\varepsilon
\|\nabla_HT_\varepsilon\|_{H^2}^2+\|\partial_zT_\varepsilon\|_{H^2}^2+\|\nabla v_\varepsilon\|_{H^2}^2)ds\nonumber\\
\leq&C(\|v_0\|_{H^2}^2+\|T_0\|_{H^2}^2)+C\int_0^t(1+\|T_\varepsilon\|_{H^2}^2+\|v_\varepsilon
\|_{H^2}^2)^3ds
\end{align*}
for any $0\leq t<t_\varepsilon^*$. Set
$$
f(t)=\sup_{0\leq s\leq t}(\|v_\varepsilon\|_{H^2}^2+\|T_\varepsilon\|_{H^2}^2+1)+\int_0^t(\varepsilon
\|\nabla_HT_\varepsilon\|_{H^2}^2+\|\partial_zT_\varepsilon\|_{H^2}^2+\|\nabla v_\varepsilon\|_{H^2}^2)ds
$$
for $t\in[0,t_\varepsilon^*)$. Then one has
\begin{equation}\label{E1}
f(t)\leq CC_0+C\int_0^t(f(s))^3ds,\quad t\in[0,t_\varepsilon^*),
\end{equation}
where $C_0=\|v_0\|_{H^2}^2+\|T_0\|_{H^2}^2$.
Set $F(t)=\int_0^t(f(t)^3)ds+1$, then by (\ref{E1}) one has
$$
F'(t)=(f(t))^3\leq C_1(F(t))^3,\quad\forall t\in[0,t_\varepsilon^*),
$$
where $C_1$ is a positive constant depending only on $R_1,R_2,R_3,h$ and
$(v_0,T_0)$. This inequality implies
$$
F(t)\leq\frac{1}{\sqrt{1-2C_1t}},\quad\forall t\in[0,t_\varepsilon^*)\cap[0,\frac{1}{2C_1}),
$$
and thus
\begin{align*}
&\sup_{0\leq s\leq t}(\|v_\varepsilon\|_{H^2}^2+\|T_\varepsilon\|_{H^2}^2)+\int_0^t(\varepsilon
\|\nabla_HT_\varepsilon\|_{H^2}^2+\|\partial_zT_\varepsilon\|_{H^2}^2+\|\nabla v_\varepsilon\|_{H^2}^2)ds\\
\leq&CC_0+CF(t)\leq CC_0+\frac{C}{\sqrt{1-2C_1t}}\leq C(C_0+\sqrt 2),
\end{align*}
for any $t\in[0,t_\varepsilon^*)\cap[0,\frac{1}{4C_1}].$ Recalling that $t_\varepsilon^*$ is the
maximal existence time, the above inequality implies $t_\varepsilon^*>\frac{1}{4C_1}$, and thus, we can choose $t_0^*=\frac{1}{4C_1}$, and
\begin{align}
\sup_{0\leq s\leq t_0^*}&(\|v_\varepsilon\|_{H^2}^2+\|T_\varepsilon\|_{H^2}^2)+\int_0^{t_0^*}(\varepsilon
\|\nabla_HT_\varepsilon\|_{H^2}^2\nonumber\\
&+\|\partial_zT_\varepsilon\|_{H^2}^2+\|\nabla v_\varepsilon\|_{H^2}^2)ds\leq  C(C_0+\sqrt 2).\label{E2}
\end{align}

We still need to establish estimates on $\partial_tv_\varepsilon$ and $\partial_tT_\varepsilon$. The estimates on $\partial_tT_\varepsilon$ follow from equation (\ref{2.0-3}), the estimates (\ref{E2}) and the Sobolev embedding inequality as follows
\begin{align*}
\int_0^{t_0^*}&\|\partial_tT_\varepsilon\|_{H^1}^2dt\leq C\int_0^{t_0^*}\bigg(\|\partial_z^2T_\varepsilon\|_{H^1}^2+\varepsilon\|\Delta_HT_\varepsilon\|_{H^1}^2
  +\|v_\varepsilon\cdot\nabla_HT_\varepsilon\|_{H^1}^2\\
  &+\left\|\int_{-h}^z\nabla_H\cdot v_\varepsilon d\xi\bigg(\partial_zT_\varepsilon+\frac{1}{h}\bigg)\right\|_{H^1}^2\bigg)dt\\
  \leq&C\int_0^{t_0^*}\bigg(\|\partial_zT_\varepsilon\|_{H^2}^2+\varepsilon\|\nabla_HT_\varepsilon\|
  _{H^2}^2+\|v_\varepsilon\|_{W^{1,\infty}}^2(\|\nabla T_\varepsilon\|_{H^1}^2+1)\\
  &+\left\|\int_{-h}^z\nabla_H\nabla_H\cdot v_\varepsilon d\xi(|\partial_zT_\varepsilon|+1)\right\|_2^2\bigg)dt\\
  \leq&C\int_0^{t_0^*}[\|\partial_zT_\varepsilon\|_{H^2}^2+\varepsilon\|\nabla_HT_\varepsilon\|
  _{H^2}^2+\|v_\varepsilon\|_{W^{1,\infty}}^2(\|\nabla T_\varepsilon\|_{H^1}^2+1)\\
  &+(\|\partial_zT_\varepsilon\|_\infty^2+1)\|\nabla^2v_\varepsilon\|_2^2]dt\\
  \leq&C\int_0^{t_0^*}[\|\partial_zT_\varepsilon\|_{H^2}^2+\varepsilon\|\nabla_HT_\varepsilon\|
  _{H^2}^2+\|v_\varepsilon\|_{H^3}^2(\|T_\varepsilon\|_{H^2}^2+1)+\|\partial_zT_\varepsilon
  \|_{H^2}^2\|v_\varepsilon\|_{H^2}^2]dt\\
  \leq&C(C_0+\sqrt2)^2(t_0^*+1).
  \end{align*}
For the estimates on $\partial_tv_\varepsilon$, we split $v_\varepsilon$ as $v_\varepsilon=\tilde{v}_\varepsilon+\bar v_\varepsilon$ (recall the definitions of $\tilde\phi$ and $\bar\phi$ in the Introduction). Then $\tilde v_\varepsilon$ and $\bar v_\varepsilon$ satisfy system (see e.g. Cao and Titi \cite{CAOTITI2})
\begin{eqnarray}
&\partial_t\bar v_\varepsilon-\frac{1}{R_1}\Delta_H\bar v_\varepsilon+(\bar v_\varepsilon\cdot\nabla_H)\bar v_\varepsilon+\overline{(\tilde v_\varepsilon\cdot\nabla_H)\tilde v_\varepsilon+(\nabla_H\cdot\tilde v_\varepsilon)\tilde v_\varepsilon}+f_0k\times\bar v_\varepsilon\nonumber\\
&+\nabla_H\left(p_s(x,y,t)-\frac{1}{2h}\int_{-h}^h\int_{-h}^zT_\varepsilon(x,y,\xi,t)d\xi dz\right)=0, \nonumber\\
&\nabla_H\cdot\bar v_\varepsilon=0,\nonumber \\
&\partial_t\tilde v_\varepsilon+L_1\tilde v_\varepsilon+(\tilde v_\varepsilon\cdot\nabla_H)\tilde v_\varepsilon-\left(\int_{-h}^z\nabla_H\cdot\tilde v_\varepsilon(x,y,\xi,t)d\xi\right)\partial_z\tilde v_\varepsilon+(\tilde v_\varepsilon\cdot\nabla_H)\bar v_\varepsilon\nonumber\\
&+(\bar v_\varepsilon\cdot\nabla_H)\tilde v_\varepsilon-\overline{(\tilde v_\varepsilon\cdot\nabla_H)\tilde v_\varepsilon+(\nabla_H\cdot\tilde v_\varepsilon)\tilde v_\varepsilon}+f_0k\times\tilde v_\varepsilon\nonumber\\
&-\nabla_H\left(\int_{-h}^zT_\varepsilon(x,y,\xi,t)d\xi-\frac{1}{2h}\int_{-h}^h\int_{-h}^z
T_\varepsilon(x,y,\xi,t)d\xi dz\right)=0.\label{E4}
\end{eqnarray}
Thanks to the estimates (\ref{E2}), we can apply the $L^2$ theory of Stokes equations and the Sobolev embedding inequality to deduce
\begin{align*}
  \int_0^{t_0^*}&\|\partial_t\bar v_\varepsilon\|_{H^1(M)}^2dt\leq C\int_0^{t_0^*}
  \bigg(\|\Delta_H\bar v_\varepsilon\|_{H^1(M)}^2+\|\bar v_\varepsilon\cdot\nabla_H\bar v_\varepsilon\|_{H^1(M)}^2+\|\overline{\tilde v_\varepsilon\cdot\nabla_H\tilde v_\varepsilon}
  \|_{H^1(M)}^2\nonumber\\
  &+\|\overline{\nabla_H\cdot\tilde v_\varepsilon \tilde v_\varepsilon}\|_{H^1(M)}^2+\|\bar v_\varepsilon\|_{H^1(M)}^2+\left\|\nabla_H\int_{-h}^h\int_{-h}^zT_\varepsilon d\xi dz\right\|_{H^1(M)}^2\bigg)dt\nonumber\\
  \leq&C\int_0^{t_0^*}(\|\nabla v_\varepsilon\|_{H^2}^2+\|v_\varepsilon\|_{W^{1,\infty}}^2\|\nabla v_\varepsilon\|_{H^1}^2+
  \|v_\varepsilon\|_{H^2}^2+\|T_\varepsilon\|_{H^2}^2)dt\nonumber\\
  \leq&C\int_0^{t_0^*}(\|\nabla v_\varepsilon\|_{H^2}^2+\|v_\varepsilon\|_{H^3}^2\| v_\varepsilon\|_{H^2}^2+
  \|v_\varepsilon\|_{H^2}^2+\|T_\varepsilon\|_{H^2}^2)dt
  \leq C(C_0+\sqrt2)^2(t_0^*+1),
\end{align*}
and similarily, it follows from equation (\ref{E4}) and the Sobolev embedding inequality that
\begin{align*}
  \int_0^{t_0^*}\|\partial_t\tilde v_\varepsilon\|_{H^1}^2dt\leq C(C_0+\sqrt2)^2(t_0^*+1).
\end{align*}
Therefore, one obtains
$$
\int_0^{t_0^*}\|\partial_tv_\varepsilon\|_{H^1}^2dt\leq C\int_0^{t_0^*}(\|\partial_t\tilde v_\varepsilon\|_{H^1}^2+\|\partial_t\bar v_\varepsilon\|_{H^1(M)}^2)dt\leq C(C_0+\sqrt2)^2(t_0^*+1).
$$
This completes the proof.
\end{proof}

We will use the following lemma to prove the uniqueness.

\begin{lemma}\label{lem3.2}
The following inequalities hold true
\begin{align*}
&\left|\int_M\left(\int_{-h}^hf(x,y,z)dz\right)\left(\int_{-h}^hg(x,y,z)h(x,y,z)dz\right)dxdy\right|\\
\leq&C\|f\|_2^{1/2}\left(\|f\|_2^{1/2}+\|\nabla_Hf\|_{2}^{1/2}\right)\|g\|_2\|h\|_2^{1/2}\left(
\|h\|_2^{1/2}+\|\nabla_Hh\|_2^{1/2}\right),
\end{align*}
and
\begin{align*}
&\left|\int_M\left(\int_{-h}^hf(x,y,z)dz\right)\left(\int_{-h}^hg(x,y,z)h(x,y,z)dz\right)dxdy\right|\\
\leq&C\|f\|_2\|g\|_2^{1/2}\left(\|g\|_2^{1/2}+\|\nabla_Hg\|_{2}^{1/2}\right)\|h\|_2^{1/2}\left(
\|h\|_2^{1/2}+\|\nabla_Hh\|_2^{1/2}\right).
\end{align*}
\end{lemma}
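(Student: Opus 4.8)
The plan is to reduce both inequalities to a slicewise application of the two-dimensional Ladyzhenskaya-type interpolation inequality on $M$, combined with Cauchy--Schwarz and Minkowski inequalities in the vertical variable. The one structural fact I will use throughout is that for any periodic $\phi\in H^1(M)$ one has
$$\|\phi\|_{L^4(M)}^2\leq C\|\phi\|_{L^2(M)}\big(\|\phi\|_{L^2(M)}+\|\nabla_H\phi\|_{L^2(M)}\big),$$
which holds on the bounded periodic square $M$; the lower-order term $\|\phi\|_{L^2(M)}$ is needed because periodic functions need not have zero horizontal mean.

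For the first inequality, I would first rewrite the product of vertical integrals as
$$\int_M\Big(\int_{-h}^h f\,dz\Big)\Big(\int_{-h}^h gh\,dz\Big)dxdy=\int_{-h}^h\Big(\int_M F\,g(\cdot,\cdot,z)\,h(\cdot,\cdot,z)\,dxdy\Big)dz,$$
where $F(x,y):=\int_{-h}^h f(x,y,z)\,dz$. For each fixed $z$ I would apply H\"older's inequality in $(x,y)$ with exponents $(4,2,4)$, obtaining the bound $\|F\|_{L^4(M)}\,\|g(\cdot,\cdot,z)\|_{L^2(M)}\,\|h(\cdot,\cdot,z)\|_{L^4(M)}$; integrating in $z$ and applying Cauchy--Schwarz in $z$ then factors the estimate into $\|F\|_{L^4(M)}$ times $\|g\|_2$, since $\int_{-h}^h\|g(\cdot,\cdot,z)\|_{L^2(M)}^2\,dz=\|g\|_2^2$, times $\big(\int_{-h}^h\|h(\cdot,\cdot,z)\|_{L^4(M)}^2\,dz\big)^{1/2}$.

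Two estimates then close the argument. First, applying the 2D inequality slicewise to $h$ and using Cauchy--Schwarz and Minkowski in $z$ gives
$$\int_{-h}^h\|h(\cdot,\cdot,z)\|_{L^4(M)}^2\,dz\leq C\|h\|_2\big(\|h\|_2+\|\nabla_Hh\|_2\big),$$
whose square root, after the elementary bound $(a+b)^{1/2}\leq a^{1/2}+b^{1/2}$, yields the required factor $\|h\|_2^{1/2}(\|h\|_2^{1/2}+\|\nabla_Hh\|_2^{1/2})$. Second, Cauchy--Schwarz in $z$ gives $\|F\|_{L^2(M)}\leq C\|f\|_2$ and $\|\nabla_HF\|_{L^2(M)}\leq C\|\nabla_Hf\|_2$, so the 2D inequality applied to $F$ produces the factor $\|f\|_2^{1/2}(\|f\|_2^{1/2}+\|\nabla_Hf\|_2^{1/2})$. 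The second inequality follows by the identical scheme, the only change being that I would apply H\"older in $(x,y)$ with exponents $(2,4,4)$ instead of $(4,2,4)$, thereby putting $g$ rather than $f$ into the anisotropic $L^4$ slot while keeping $\|f\|_2$ in the plain $L^2$ slot and bounding $F$ only in $L^2(M)$.

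The computations are all routine; the only point requiring care is the bookkeeping of the exponents and the order of integration --- specifically, ensuring that the vertical Cauchy--Schwarz and Minkowski steps are applied to $\|\cdot\|_{L^2(M)}^2$ and $\|\cdot\|_{L^4(M)}^2$ in the right order so that the mixed horizontal/vertical norms collapse exactly to $\|\cdot\|_2$ and $\|\nabla_H\cdot\|_2$ over $\Omega$. There is no genuine analytic obstacle here, as this is the standard anisotropic splitting underlying the primitive-equations literature.
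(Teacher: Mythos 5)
Your proof is correct: the slicewise H\"older inequality in $(x,y)$ with exponents $(4,2,4)$ (resp.\ $(2,4,4)$), Cauchy--Schwarz and Minkowski in $z$, and the two-dimensional Ladyzhenskaya inequality $\|\phi\|_{L^4(M)}^2\leq C\|\phi\|_{L^2(M)}\left(\|\phi\|_{L^2(M)}+\|\nabla_H\phi\|_{L^2(M)}\right)$, applied both to the horizontal slices and to $F=\int_{-h}^hf\,dz$, close both estimates with exactly the stated exponents. The paper itself omits the proof and refers to Proposition 2.2 of \cite{CAOTITI1}, whose argument is this same anisotropic splitting, so your route coincides with the paper's.
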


\begin{proof}
The proof can be established in the same way as in Proposition 2.2 of \cite{CAOTITI1}, and thus it is omitted here.
\end{proof}

We also need the following version of the Aubin-Lions lemma.

\begin{lemma}\label{AL}
(Aubin-Lions Lemma, See Simon \cite{Simon} Corollary 4) Assume that $X, B$ and $Y$ are three Banach spaces, with $X\hookrightarrow\hookrightarrow B\hookrightarrow Y.$ Then it holds that

(i) If $F$ is a bounded subset of $L^p(0, T; X)$ where $1\leq p<\infty$, and $\frac{\partial F}{\partial t}=\left\{\frac{\partial f}{\partial t}|f\in F\right\}$ is bounded in $L^1(0, T; Y)$, then $F$ is relatively compact in $L^p(0, T; B)$;

(ii) If $F$ is bounded in $L^\infty(0, T; X)$ and $\frac{\partial F}{\partial t}$ is bounded in $L^r(0, T; Y)$ where $r>1$, then $F$ is relatively compact in $C([0, T]; B)$.
\end{lemma}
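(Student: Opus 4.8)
Since the statement is the Aubin--Lions--Simon compactness lemma (precisely Corollary~4 of \cite{Simon}), the plan is to reconstruct the classical argument by reducing everything to two ingredients: an interpolation inequality coming from the compact embedding, and a modulus-of-continuity-in-time estimate coming from the bound on the time derivative. The first step is to establish Ehrling's (Lions') inequality: because $X\hookrightarrow\hookrightarrow B\hookrightarrow Y$, for every $\eta>0$ there is a constant $C_\eta$ with $\|u\|_B\le\eta\|u\|_X+C_\eta\|u\|_Y$ for all $u\in X$. I would prove this by contradiction; negating it yields a sequence $u_n$ with $\|u_n\|_B=1$, $\eta\|u_n\|_X$ bounded, and $\|u_n\|_Y\to0$, whence compactness of $X\hookrightarrow\hookrightarrow B$ gives a $B$-convergent subsequence whose limit has unit $B$-norm but vanishes in $Y$, contradicting $B\hookrightarrow Y$.

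For part (i) I would invoke the Kolmogorov--Riesz--Fr\'echet type criterion in Bochner spaces: a set $F\subset L^p(0,T;B)$ is relatively compact if and only if the mean values $\big\{\int_{t_1}^{t_2}f\,dt : f\in F\big\}$ are relatively compact in $B$ for each $0<t_1<t_2<T$, and the time-translates satisfy $\sup_{f\in F}\|f(\cdot+h)-f\|_{L^p(0,T-h;B)}\to0$ as $h\to0^+$. The mean-value condition is immediate, since boundedness of $F$ in $L^p(0,T;X)$ forces $\int_{t_1}^{t_2}f\,dt$ to be bounded in $X$ by H\"older, hence precompact in $B$. For the translate condition I apply Ehrling to $w=f(\cdot+h)-f$, obtaining $\|w\|_{L^p(B)}\le\eta\|w\|_{L^p(X)}+C_\eta\|w\|_{L^p(Y)}$; the first term is at most $2\eta\|f\|_{L^p(X)}$, uniformly small. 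For the second I write $\|f(t+h)-f(t)\|_Y\le\int_t^{t+h}\|\partial_s f\|_Y\,ds$ and apply Minkowski's integral inequality to get $\|w\|_{L^p(0,T-h;Y)}\le h^{1/p}\|\partial_t f\|_{L^1(0,T;Y)}$, which is $O(h^{1/p})$ uniformly in $f$. Choosing $\eta$ small and then $h$ small finishes the translate estimate.

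For part (ii) I would instead use the Arzel\`a--Ascoli theorem in $C([0,T];B)$. Pointwise precompactness in $B$ holds because $\|f(t)\|_X$ is essentially bounded, so $\{f(t)\}_{f\in F}$ is precompact in $B$ for each fixed $t$. Equicontinuity follows from Ehrling together with the stronger hypothesis: by H\"older, $\|f(t)-f(t')\|_Y\le\int_{t'}^{t}\|\partial_s f\|_Y\,ds\le\|\partial_t f\|_{L^r(0,T;Y)}\,|t-t'|^{1-1/r}$, which tends to zero uniformly in $f$ precisely because $r>1$; the $X$-contribution in the Ehrling split is again absorbed into $2\eta\|f\|_{L^\infty(X)}$. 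This is exactly where the strengthening from $L^r$ with $r>1$ (rather than $r=1$) is used, and why the conclusion upgrades from $L^p(0,T;B)$ to $C([0,T];B)$.

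The step I expect to be the genuine obstacle is not any of the estimates above but the underlying compactness characterization invoked in part (i): showing that the mean-value precompactness together with uniform vanishing of the time-translates is sufficient for relative compactness in $L^p(0,T;B)$. This requires a time-mollification argument, approximating each $f$ by its local time averages, controlling the approximation error uniformly over $F$ via the translate condition, and then extracting a finite $\varepsilon$-net by a diagonal/covering argument based on the mean-value precompactness. Since this is precisely Simon's theorem being cited, in practice one simply appeals to \cite{Simon}; the role of the present lemma is only to record the clean specialization to the triple $X\hookrightarrow\hookrightarrow B\hookrightarrow Y$ in the form needed for the limit passage $\varepsilon\to0$ in Section~\ref{sec3}.
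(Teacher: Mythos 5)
The paper contains no proof of this lemma: it is quoted directly as Corollary 4 of Simon \cite{Simon}, so the only ``paper proof'' is the citation itself. Your reconstruction of the classical argument is essentially sound and, notably, ends up in the same place as the paper on the one genuinely hard ingredient: you correctly identify that part (i) rests on Simon's compactness characterization in $L^p(0,T;B)$ (mean-value precompactness plus uniform vanishing of time translates), and you delegate that to \cite{Simon} just as the authors do. Your Ehrling inequality proof is correct, the mean-value precompactness via H\"older is correct, and the Arzel\`a--Ascoli route for part (ii), with the H\"older bound $\|f(t)-f(t')\|_Y\le\|\partial_tf\|_{L^r(0,T;Y)}|t-t'|^{1-1/r}$ explaining exactly why $r>1$ buys the upgrade to $C([0,T];B)$, is the standard and correct argument.

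Two technical points in your sketch need repair, though neither is fatal. First, the translate bound $\|f(\cdot+h)-f\|_{L^p(0,T-h;Y)}\le h^{1/p}\|\partial_tf\|_{L^1(0,T;Y)}$ does not follow from Minkowski's integral inequality: Minkowski would require $\partial_tf\in L^p(0,T;Y)$, which is not assumed, and would produce a factor $h$ rather than $h^{1/p}$. The correct derivation sets $g(t)=\int_t^{t+h}\|\partial_sf\|_Y\,ds$ and interpolates the trivial bounds $\|g\|_{L^\infty}\le\|\partial_tf\|_{L^1(0,T;Y)}$ and, by Fubini, $\|g\|_{L^1(0,T-h)}\le h\|\partial_tf\|_{L^1(0,T;Y)}$, giving $\|g\|_{L^p}\le h^{1/p}\|\partial_tf\|_{L^1(0,T;Y)}$, which is your claimed estimate. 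Second, in part (ii) the pointwise precompactness of $\{f(t):f\in F\}$ at a \emph{fixed} $t$ is not immediate from boundedness in $L^\infty(0,T;X)$: the null set on which $\|f(s)\|_X\le M$ fails depends on $f$, and since $X$ need not be reflexive you cannot simply assert $f(t)\in X$ with $\|f(t)\|_X\le M$. The fix uses the $Y$-equicontinuity you already established: each $f$ has a representative continuous into $Y$; choosing $t_n\to t$ with $\|f(t_n)\|_X\le M$ gives $f(t_n)\to f(t)$ in $Y$, so $f(t)$ lies in the closure in $B$ of the ball $\{u\in X:\|u\|_X\le M\}$, which is compact in $B$ by the compact embedding. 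With these two repairs your argument is a faithful proof modulo Simon's Theorem 1, which is precisely the content the paper's citation is carrying.
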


\begin{proposition}\label{prop2}
Let $v_0$ and $T_0\in H^2(\Omega)$ be two periodic functions, such that they are even and odd in $z$, respectively. Then system (\ref{1.1-1})-(\ref{1.9-1}) has a unique strong solution $(v,T)$ in $\Omega\times(0,t_0^*)$, such that
$$
(v,T)\in L^\infty(0,t_0^*;H^2(\Omega))\cap C([0,t_0^*];H^1(\Omega)), \quad(\nabla v,\partial_zT)\in L^2(0,t_0^*; H^2(\Omega)
$$
and
$$
(\partial_tv,\partial_tT)\in L^2(0,t_0^*; H^1(\Omega)),
$$
where $t_0^*$ is the same positive time stated in Proposition \ref{lem3.1}.

Moreover, the strong solutions are continuously dependent on the initial data, in other words, for any two strong
solutions $(v_1,T_1)$ and $(v_2,T_2)$ to system (\ref{1.1-1})--(\ref{1.9-1}) on $\Omega\times(0,t_0)$, with initial data $(v_{10}, T_{10})$ and $(v_{20}, T_{20})$, respectively, it holds that
\begin{align*}
  &\sup_{0\leq t\leq t_0}(\|v\|_2^2+\|T\|_2^2)\leq Ce^{C\int_0^{t_0}(1+\|v_2\|_{H^2}^4+\|T_2\|_{H^2}^4)dt}(\|v_0\|_2^2+\|T_0\|_2^2),
\end{align*}
with $(v_0,T_0)=(v_{10}-v_{20}, T_{10},T_{20})$.
\end{proposition}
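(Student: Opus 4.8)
The plan is to establish the two assertions of Proposition \ref{prop2} in sequence: first the existence of a strong solution via the vanishing-viscosity limit $\varepsilon\to0$, then uniqueness together with the continuous dependence estimate, which I will prove simultaneously since uniqueness is the special case $(v_{10},T_{10})=(v_{20},T_{20})$. For existence, I would begin from the family $(v_\varepsilon,T_\varepsilon)$ constructed in Proposition \ref{prop1} and controlled uniformly in $\varepsilon$ on $[0,t_0^*]$ by the bounds of Proposition \ref{lem3.1}: these give uniform boundedness of $(v_\varepsilon,T_\varepsilon)$ in $L^\infty(0,t_0^*;H^2)$, of $\nabla v_\varepsilon$ and $\partial_zT_\varepsilon$ in $L^2(0,t_0^*;H^2)$, and of $(\partial_tv_\varepsilon,\partial_tT_\varepsilon)$ in $L^2(0,t_0^*;H^1)$. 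By weak-$*$ and weak compactness I would extract a subsequence converging to a limit $(v,T)$ in the corresponding weak topologies. The crucial point is passing to the limit in the nonlinear terms, and here I would invoke the Aubin--Lions Lemma \ref{AL}: taking $X=H^2$, $B=H^{2-\delta}$ (or $H^1$) and $Y=H^1$, the uniform $L^\infty(0,t_0^*;H^2)$ bound together with the $L^2(0,t_0^*;H^1)$ bound on the time derivatives yields strong convergence of $(v_\varepsilon,T_\varepsilon)$ in $C([0,t_0^*];H^1)$ (part (ii)) and in $L^2(0,t_0^*;H^2)$, which is more than enough to pass to the limit in all the quadratic nonlinearities. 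The term $\varepsilon\Delta_HT_\varepsilon$ vanishes in the limit because $\varepsilon\|\nabla_HT_\varepsilon\|_{H^2}^2$ is integrable uniformly, so $\varepsilon\Delta_HT_\varepsilon\to0$ in $L^2(0,t_0^*;L^2)$. The limit $(v,T)$ then satisfies (\ref{1.10})--(\ref{1.12}) in the appropriate sense, and the stated regularities follow from lower semicontinuity of norms under weak convergence, while the symmetry (even/odd in $z$) and periodicity are preserved by the strong convergence.

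For the continuous dependence and uniqueness estimate, I would take two strong solutions $(v_1,T_1)$ and $(v_2,T_2)$ with the stated regularity and set $v=v_1-v_2$, $T=T_1-T_2$, which solve the difference system obtained by subtracting the two copies of (\ref{1.10})--(\ref{1.12}). The plan is a basic $L^2$ energy estimate: I would multiply the $v$-equation by $v$ and the $T$-equation by $T$, integrate over $\Omega$, and sum. The diffusion terms produce nonnegative dissipation $\frac{1}{R_1}\|\nabla_Hv\|_2^2+\frac{1}{R_2}\|\partial_zv\|_2^2+\frac{1}{R_3}\|\partial_zT\|_2^2$, while the Coriolis term drops out by antisymmetry. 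The difference of the nonlinear terms splits into pieces that are bilinear in the difference $(v,T)$ and the solutions; each piece must be bounded by the dissipation plus a multiple of $\|v\|_2^2+\|T\|_2^2$ with a time-dependent coefficient controlled by $\|v_2\|_{H^2}$ and $\|T_2\|_{H^2}$.

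The main obstacle, and where I expect the technical work to concentrate, is estimating the nonlocal transport terms of the form $\big(\int_{-h}^z\nabla_H\cdot v\,d\xi\big)\partial_zv_1$ and $\big(\int_{-h}^z\nabla_H\cdot v\,d\xi\big)\big(\partial_zT_1+\tfrac1h\big)$ in the difference system, since these couple a vertical average of $\nabla_Hv$ against a vertical derivative of a full solution, and a naive H\"older estimate loses too many derivatives on the low-regularity difference $(v,T)\in L^2$. This is precisely the situation Lemma \ref{lem3.2} is designed to handle: after integrating the vertical-velocity factor by parts in $z$ (so that $\partial_zv_1$ is paired against $v$ rather than differentiating the difference), I would apply the anisotropic estimate of Lemma \ref{lem3.2} with $f=\nabla_Hv$ or $\nabla_HT$, distributing the horizontal derivative and the $L^2\!-\!H^1$ interpolation so that the difference terms appear only through $\|v\|_2,\|T\|_2$ and their horizontal gradients (absorbed into the dissipation), while the full solutions enter through their $H^2$ norms. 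Once every term is bounded this way, one arrives at a differential inequality
\begin{equation*}
\frac{d}{dt}\big(\|v\|_2^2+\|T\|_2^2\big)\leq C\big(1+\|v_2\|_{H^2}^4+\|T_2\|_{H^2}^4\big)\big(\|v\|_2^2+\|T\|_2^2\big),
\end{equation*}
and Gr\"onwall's inequality yields the claimed bound, with uniqueness following immediately by taking identical initial data.
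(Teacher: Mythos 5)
Your outline coincides with the paper's proof in both halves: existence via the Aubin--Lions Lemma \ref{AL} applied to the family $(v_\varepsilon,T_\varepsilon)$ with the uniform bounds of Proposition \ref{lem3.1} (and $\varepsilon\Delta_HT_\varepsilon\to0$ in $L^2(0,t_0^*;L^2)$ exactly as you argue, since $\|\varepsilon\Delta_HT_\varepsilon\|_{L^2(L^2)}^2\leq\varepsilon\int_0^{t_0^*}\varepsilon\|\nabla_HT_\varepsilon\|_{H^2}^2dt\leq\varepsilon K$), and uniqueness together with continuous dependence via an $L^2$ energy estimate on the difference system, with Lemma \ref{lem3.2} and the $z$-integration by parts playing precisely the role you assign to them.

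There is, however, one step that would fail as literally stated: the claim that every nonlinear piece can be bounded by a small multiple of the dissipation plus $C(1+\|v_2\|_{H^2}^4+\|T_2\|_{H^2}^4)(\|v\|_2^2+\|T\|_2^2)$, so that the \emph{unweighted} sum $\|v\|_2^2+\|T\|_2^2$ satisfies your final differential inequality. In the $T$-equation, the term $\int_\Omega\left(\int_{-h}^z\nabla_H\cdot v\,d\xi\right)\left(\partial_zT_2+\frac{1}{h}\right)T\,dxdydz$, after the integration by parts in $z$ you propose, produces $\int_\Omega\left(\int_{-h}^z\nabla_H\cdot v\,d\xi\right)T_2\,\partial_zT\,dxdydz$, which is bounded by $C\|T_2\|_\infty\|\nabla_Hv\|_2\|\partial_zT\|_2$. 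Whichever way you apply Young's inequality to this cross term, a large coefficient of size $\|T_2\|_{H^2}^2$ lands on one of the two dissipation norms: either on $\|\partial_zT\|_2^2$, which the $T$-equation's dissipation $\frac{1}{R_3}\|\partial_zT\|_2^2$ cannot absorb, or on $\|\nabla_Hv\|_2^2$, which exceeds the $v$-equation's $\frac{1}{R_1}\|\nabla_Hv\|_2^2$; note the $T$-equation itself carries no horizontal dissipation to help. Accordingly, the paper's inequality (\ref{3.10}) retains the term $C(1+\|T_2\|_{H^2}^2)\|\nabla_Hv\|_2^2$ on the right-hand side, and the argument is closed by multiplying the $v$-inequality (\ref{3.8}) by a sufficiently large constant $\alpha$ --- chosen using $\sup_{0\leq t\leq t_0}\|T_2\|_{H^2}^2<\infty$, valid because $T_2$ is a strong solution --- before summing, so that Gronwall is applied to the weighted quantity $\alpha\|v\|_2^2+\|T\|_2^2$, whose equivalence with $\|v\|_2^2+\|T\|_2^2$ then gives the stated bound. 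This weighted-combination step (or an equivalent nested Gronwall argument using the time-integrated $v$-dissipation) is the one idea missing from your proposal; the remainder is the paper's argument.
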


\begin{proof}
By Proposition \ref{lem3.1}, for any given $\varepsilon>0$, system (\ref{2.0-1})--(\ref{2.0-7}) has a solution $(v_\varepsilon, T_\varepsilon)$ in $\Omega\times(0,t_0^*)$ such that
$$
\sup_{0\leq t\leq t_0^*}(\|v_\varepsilon(t)\|_{H^2}^2+\|T_\varepsilon(t)\|_{H^2}^2)+\int_0^{t_0^*}(\varepsilon\|\nabla_H
T_\varepsilon\|_{H^2}^2+\|\partial_zT_\varepsilon\|_{H^2}^2+\|\nabla v_\varepsilon\|_{H^2}^2)dt\leq K
$$
and
$$
\int_0^{t_0^*}(\|\partial_tv_\varepsilon\|_{H^1}^2+\|\partial_t T_\varepsilon\|_{H^1}^2)dt\leq K,
$$
where $K$ is a constant which is independent of $\varepsilon$. On account of these estimates, by Lemma \ref{AL}, there is a subsequence of $(v_{\varepsilon_j},T_{\varepsilon_j})$ and $(v,T)$, such that
\begin{eqnarray*}
  &(v_{\varepsilon_j}, T_{\varepsilon_j})\rightarrow (v,T),\quad\mbox{ in }C([0,t_0];H^1(\Omega)),\\
  &v_{\varepsilon_j}\rightarrow v,\quad\mbox{ in }L^2(0,t_0;H^2(\Omega)),\qquad \partial_zT_{\varepsilon_j}\rightarrow\partial_zv,\quad\mbox{ in }L^2(0,t_0; H^1(\Omega)),\\
  &(v_{\varepsilon_j}, T_{\varepsilon_j})\rightharpoonup^*(v,T),\quad\mbox{ in }L^\infty(0,t_0; H^2(\Omega)),\\
  &v_{\varepsilon_j}\rightharpoonup v,\quad\mbox{ in }L^2(0,t_0;H^3(\Omega)),\qquad \partial_zT_{\varepsilon_j}\rightharpoonup\partial_zT,\quad\mbox{ in }L^2(0,t_0;H^2(\Omega)),\\
  &(\partial_tv_{\varepsilon_j},\partial_tT_{\varepsilon_j})\rightharpoonup(\partial_tv,\partial_tT),\quad
  \mbox{ in }L^2(0,t_0;H^1(\Omega)),
\end{eqnarray*}
where $\rightharpoonup$ and $\rightharpoonup^*$ are the weak and weak-* convergence, respectively. Due to these convergence, one can take the limit ${\varepsilon_j}\rightarrow0$ to see that $(v,T)$ is a strong solution to the system (\ref{1.10})--(\ref{1.16}), or equivalently system (\ref{1.1-1})--(\ref{1.9-1}). The regularities of $(v,T)$ follow from the uniform, in $\varepsilon$, estimates on $(v_{\varepsilon_j}, T_{\varepsilon_j})$ stated above and the weakly lower semi-continuity of the norms.

We now prove the continuous dependence on the initial data and the uniqueness of strong solutions. Let $(v_1,T_1)$ and $(v_2,T_2)$ be two strong solutions to system (\ref{1.1-1})--(\ref{1.9-1}) in $\Omega\times(0,t_0)$, with initial data $(v_{10},T_{10})$ and
$(v_{20},T_{20})$, respectively. Set
$v=v_1-v_2$ and $T=T_1-T_2$. One can easily check that $(v,T)$ satisfies the following system
\begin{eqnarray}
&\partial_tv+L_1v+(v_1\cdot\nabla_H)v+(v\cdot\nabla_H)v_2-\left(\int_{-h}^z\nabla_H\cdot v_1d\xi\right)\partial_zv-\left(\int_{-h}^z\nabla_H\cdot vd\xi\right)\partial_zv_2\nonumber\\
&+f_0k\times v+\nabla_Hp_s(x,y,t)-\nabla_H\left(\int_{-h}^zT(x,y,\xi,t)d\xi\right)=0,\label{3.5}\\
&\nabla_H\cdot\bar v=0,\nonumber\\
&\partial_tT+L_2T+v_1\cdot\nabla_HT+v\cdot\nabla_H T_2-\left(\int_{-h}^z\nabla_H\cdot v_1d\xi\right)\partial_zT\nonumber\\
&-\left(\int_{-h}^z\nabla_H\cdot vd\xi\right)\left(\partial_zT_2+\frac{1}{h}\right)=0\label{3.6}
\end{eqnarray}
and the boundary and initial conditions
\begin{eqnarray*}
&v \mbox{ and }T \mbox{ are periodic in }x, y, z, \\
&v \mbox{ and }T \mbox{ are even and odd in }z,\mbox{ respectively}, \\
&(v,T)|_{t=0}=(v_0,T_0).
\end{eqnarray*}

Multiplying (\ref{3.5}) by $v$ and integrating by parts (recalling the regularities of $v$ that we have just proved) yield
\begin{align}
&\frac{1}{2}\frac{d}{dt}\int_\Omega|v|^2dxdydz+\int_\Omega\left(\frac{1}{R_1}|\nabla_Hv|^2
+\frac{1}{R_2}|\partial_zv|^2\right)dxdydz\nonumber\\
=&\int_\Omega\left\{\left[\left(\int_{-h}^z\nabla_H\cdot vd\xi\right)\partial_zv_2-(v\cdot\nabla_H)v_2\right]
v-\left(\int_{-h}^zTd\xi\right)\nabla_H\cdot v\right\}dxdydz. \label{3.7}
\end{align}
By Lemma \ref{lem3.2} and using Cauchy's inequality, we have the following estimates
\begin{align*}
&\left|\int_\Omega\left(\int_{-h}^z\nabla_H\cdot vd\xi\right)\partial_zv_2\cdot vdxdydz\right|\leq\int_M\left(\int_{-h}^h|\nabla_Hv|dz\right)\left(\int_{-h}^h
|\partial_zv_2||v|dz\right)dxdy\\
\leq&C\|\nabla_Hv\|_2\|\partial_zv_2\|_2^{1/2}(\|\partial_zv_2\|_2^{1/2}+\|\nabla_H\partial_zv_2
\|_2^{1/2})\|v\|_2^{1/2}(\|v\|_2^{1/2}+\|\nabla_Hv\|_2^{1/2})\\
\leq&C\|v_2\|_{H^2}(\|v\|_2\|\nabla_Hv\|_2+\|v\|_2^{1/2}\|\nabla_Hv\|_2^{3/2})\leq \sigma\|\nabla_Hv\|_2^2+C_\sigma(1+\|v_2\|_{H^2}^4)\|v\|_2^2,
\end{align*}
with a sufficiently small positive constant $\sigma$.
Noticing that $|v_2(z)|\leq\frac{1}{2h}\int_{-h}^h|v_2(z)|dz+\int_{-h}^h|\partial_zv_2|dz$, applying Lemma \ref{lem3.2} again and using the Cauchy-Schwarz inequality, we then obtain
\begin{align*}
&\left|\int_\Omega (v\cdot\nabla_H)v_2\cdot vdxdydz\right|\\
=&-\int_\Omega[\nabla_H\cdot vv_2\cdot v+(v\cdot\nabla_H)v\cdot v_2]dxdydz\leq\int_\Omega|\nabla_Hv||v||v_2|dxdydz\\
\leq&C\int_M\left(\int_{-h}^h(|v_2|+|\partial_zv_2|)dz\right)\left(\int_{-h}^h|\nabla_Hv||v|dz
\right)dxdy\\
\leq&C\|\partial_zv_2\|_2^{1/2}(\|\partial_zv_2\|_2^{1/2}+\|\nabla_H\partial_zv_2\|_2^{1/2})
\|\nabla_Hv\|_2\|v\|_2^{1/2}(\|v\|_2^{1/2}+\|\nabla_Hv\|_2^{1/2})\\
&+C\|v_2\|_2^{1/2}(\|v_2\|_2^{1/2}+\|\nabla_H v_2\|_2^{1/2})
\|\nabla_Hv\|_2\|v\|_2^{1/2}(\|v\|_2^{1/2}+\|\nabla_Hv\|_2^{1/2})\\
\leq&\|v_2\|_{H^2}(\|v\|_2\|\nabla_Hv\|_2+\|v\|_2^{1/2}\|\nabla_Hv\|_2^{3/2})\leq \sigma\|\nabla_Hv\|_2^2+C_\sigma(1+\|v_2\|_{H^2}^4)\|v\|_2^2,
\end{align*}
with a sufficiently small positive constant $\sigma$.
Substituting these inequalities into (\ref{3.7}) implies
\begin{align}
&\frac{d}{dt}\int_\Omega|v|^2dxdydz+\int_\Omega\left(\frac{1}{R_1}|\nabla_Hv|^2
+\frac{1}{R_2}|\partial_zv|^2\right)dxdydz\nonumber\\
\leq&C(1+\|v_2\|_{H^2}^4)(\|v\|_2^2+\|T\|_2^2).\label{3.8}
\end{align}

Multiplying (\ref{3.6}) by $T$ and integrating by parts (recalling the regularities of $T$ that have just been proved) yield
\begin{align}
&\frac{1}{2}\frac{d}{dt}\int_\Omega|T|^2dxdydz+\frac{1}{R_3}\int_\Omega|\partial_zT|^2dxdydz
\nonumber\\
=&-\int_\Omega\left[v\cdot\nabla_HT_2-\left(\int_{-h}^z\nabla_H\cdot vd\xi\right)\left(\partial_zT_2+\frac{1}{h}\right)\right]Tdxdydz. \label{3.9}
\end{align}
Note that $|T(z)|\leq\frac{1}{2h}\int_{-h}^h|T(z)|dz+\int_{-h}^h|\partial_zT|dz$, we can apply Lemma \ref{lem3.2} and using the Cauchy-Schwarz inequality to deduce
\begin{align*}
&\left|\int_\Omega v\cdot\nabla_HT_2Tdxdydz\right|\\
\leq&\int_M\left(\int_{-h}^h(|T|+|\partial_zT|)dz\right)\left(\int_{-h}^h
|v||\nabla_HT_2|dz\right)dxdy\\
\leq&C(\|T\|_2+\|\partial_zT\|_2)\|v\|_2^{1/2}(\|v\|_2^{1/2}+\|\nabla_Hv\|_2^{1/2})\\
&\times\|\nabla_HT_2\|_2^{1/2}
(\|\nabla_HT_2\|_2^{1/2}+\|\nabla_H^2T_2\|_2^{1/2})\\
\leq&C(\|T\|_2+\|\partial_zT\|_2)(\|v\|_2+\|v\|_2^{1/2}\|\nabla_Hv\|_2^{1/2})\|T_2\|_{H^2}\\
\leq&\sigma(\|\partial_zT\|_2^2+\|\nabla_Hv\|_2^2)+C_\sigma(1+\|T_2\|_{H^2}^4)(\|v\|_2^2+\|T\|_2^2),
\end{align*}
and integration by parts yields
\begin{align*}
&\left|\int_\Omega\left(\int_{-h}^z\nabla_H\cdot vd\xi\right)\partial_zT_2Tdxdydz\right|\\
=&\left|-\int_\Omega\left[\nabla_H\cdot vT_2T+\left(\int_{-h}^z\nabla_H\cdot vd\xi\right)T_2\partial_zT\right]dxdydz\right|\\
\leq&\sigma\|\partial_zT\|_2^2+C_\sigma(1+\|T_2\|_\infty^2)(\|T\|_2^2+\|\nabla_Hv\|_2^2)\\
\leq&\sigma\|\partial_zT\|_2^2+C_\sigma(1+\|T_2\|_{H^2}^2)(\|T\|_2^2+\|\nabla_Hv\|_2^2),
\end{align*}
with a sufficiently small positive constant $\sigma$.
Substituting these estimates into (\ref{3.9}), one has
\begin{align}
&\frac{d}{dt}\int_\Omega|T|^2dxdydz+\frac{1}{R_3}\int_\Omega|\partial_zT|^2dxdydz
\nonumber\\
\leq&C(1+\|T_2\|_{H^2}^2)\|\nabla_Hv\|_2^2+C(1+\|T_2\|_{H^2}^4)(\|v\|_2^2+\|T\|_2^2). \label{3.10}
\end{align}

Since $(v_2, T_2)$ is a strong solution in $\Omega\times(0,t_0)$, it satisfies
$$
\sup_{0\leq t\leq t_0}(\|v_2\|_{H^2}^2+\|T_2\|_{H^2}^2)<\infty.
$$
Multiplying inequality (\ref{3.8}) by a sufficiently large positive number $\alpha$, and summing the resulting inequality with (\ref{3.10}), one reaches
\begin{align*}
\frac{d}{dt}\int_\Omega(\alpha|v|^2+|T|^2)dxdydz\leq C(1+\|v_2\|_{H^2}^4+\|T_2\|_{H^2}^4)(\|v\|_2^2+\|T\|_2^2).
\end{align*}
By the Gronwall inequality, it follows from this inequality that
\begin{align*}
  &\sup_{0\leq t\leq t_0}(\|v\|_2^2+\|T\|_2^2)\leq Ce^{C\int_0^{t_0}(1+\|v_2\|_{H^2}^4+\|T_2\|_{H^2}^4)dt}(\|v_0\|_2^2+\|T_0\|_2^2).
\end{align*}
This proves the continuous dependence of the initial data.
In particular, if $(v_{10},T_{10})=(v_{20},T_{20})$, then $(v,T)\equiv(0,0)$, i.e.
$(v_1,T_1)=(v_2,T_2)$, proving the uniqueness. This completes the proof.
\end{proof}

\section{Global Existence of Strong Solutions}\label{sec4}

In this section, we show that the local strong solution obtained in section \ref{sec3} can be
extended to be a global one. That is, we give below the proof of Theorem \ref{thm1}.

\begin{proof}[\textbf{Proof of Theorem \ref{thm1}}]
By Proposition \ref{prop2}, there is a unique strong solution $(v,T)$ to system (\ref{1.1-1})--(\ref{1.9-1}) on $\Omega\times(0,t_0^*)$ such that
\begin{equation}
\sup_{0\leq t\leq t_0^*}(\|T\|_{H^2}^2+\|v\|_{H^2}^2)+\int_0^{t_0^*}(\|\nabla v\|_{H^2}^2+\|\partial_zT\|_{H^2}^2)dt\leq C.\label{4.1}
\end{equation}

Set $u=\partial_zv$ and define functions $\eta$ and $\theta$
\begin{eqnarray}
\eta&=&\nabla_H^\bot\cdot u=\partial_xu^2-\partial_yu^1,\label{eta1}\\
\theta&=&\nabla_H\cdot u+R_1T=\partial_xu^1+\partial_yu^2+R_1T.\label{theta1}
\end{eqnarray}
By Proposition \ref{lem5.3} (see the Appendix section below), one has
\begin{align}\label{4.4}
&\sup_{0\leq s\leq t}[s^2(\|\eta(s)\|_{H^2}^2+\|\theta(s)\|_{H^2}^2)]+\int_0^ts^2(\|\eta(s)\|_{H^3}^2
\nonumber\\
&+\|\theta(s)\|_{H^3}^2+\|\partial_t\eta(s)\|_{H^1}^2+\|\partial_t\theta(s)\|_{H^1}^2)ds\leq K_3(t)
\end{align}
for all $t\in[0,t_0^*],$ where $K_3(t)$ is a bounded function on $[0,t_0^*]$.

We consider the strong solution $(v,T)$ on the maximal interval of existence $(0, T^*)$. We are going to prove that $T^*=\infty$. Suppose that $T^*<\infty$. Thanks to the regularity properties stated in Proposition \ref{lem5.3} and Proposition \ref{lem5.4}, below we define, for any $t\in(0,T^*),$
\begin{eqnarray*}
\mathcal X(t)&=&1+\|\nabla_H\Delta_H\bar v(t)\|_2^2+C_R\|\Delta_HT(t)\|_2^2+C_R\|\nabla_H\partial_zT(t)\|_2^2\\
&&+\|\Delta_H\eta(t)\|_2^2+\|\nabla_H\partial_z\eta(t)\|_2^2+\|\Delta_H\theta(t)\|_2^2
+\|\nabla_H\partial_z
\theta(t)\|_2^2\\
\mathcal Y(t)&=&\|\Delta_H^2\bar v(t)\|_2^2+\|\Delta_H\partial_zT(t)\|_2^2+\|\nabla_H\partial_z^2T(t)\|_2^2
+\|\nabla_H\Delta_H\eta(t)\|_2^2\\
&&+\|\Delta_H\partial_z\eta(t)\|_2^2+\|\nabla_H\Delta_H\theta(t)
\|_2^2+\|\Delta_H\partial_z\theta(t)\|_2^2,\\
\mathcal Z(t)&=&\log\mathcal X(t),
\end{eqnarray*}
where $C_R=\frac{2R_1^2(R_1+R_2)(R_2-R_3)^2}{R_2^2R_3}$. Therefore, it follows (see (122) in \cite{CAOTITI3})
\begin{align}
\frac{d\mathcal X}{dt}+C\mathcal Y\leq&C\|v\|_2(\|\nabla_H\bar v\|_{H^1(M)}+\|T\|_\infty+\|\eta\|_{H^1}+\|\theta\|_{H^1})\mathcal X\log\mathcal X\nonumber\\
&+\left[1+\|T\|_\infty^4+\|\partial_zT\|_2^2+\|\bar v\|_2^2(1+\|\bar v\|_{H^1}^2)+(1+\|\partial_zu\|_2^2)\|\partial_zu\|_{H^1}^2\right.\nonumber\\
&\left.+(1+\|\eta\|_2^2)\|\eta\|_{H^1}^2+(1+\|\theta\|_2^2)\|\theta\|_{H^1}^2\right]\mathcal X+(\|\eta\|_2^2\|\nabla_H\eta\|_2^2\nonumber\\
&+\|\theta\|_2^2\|\nabla_H\theta\|_2^2+\|T\|_\infty^4+\|\partial_z
u\|_2^2\|\nabla_H\partial_zu\|_2^2)\label{4.5}
\end{align}
and
\begin{align}
\frac{d\mathcal Z}{dt}\leq&C\|v\|_2(\|\nabla_H\bar v\|_{H^1(M)}+\|T\|_\infty+\|\eta\|_{H^1}+\|\theta\|_{H^1})\mathcal Z\nonumber\\
&+\left[1+\|T\|_\infty^4+\|\partial_zT\|_2^2+\|\bar v\|_2^2(1+\|\bar v\|_{H^1}^2)+(1+\|\partial_zu\|_2^2)\|\partial_zu\|_{H^1}^2\right.\nonumber\\
&\left.+(1+\|\eta\|_2^2)\|\eta\|_{H^1}^2+(1+\|\theta\|_2^2)\|\theta\|_{H^1}^2\right],\label{4.6}
\end{align}
for any $t\in(0,T^*)$.

Recalling (\ref{4.1}) and the definitions of $\eta,\theta$ in (\ref{eta1}) and (\ref{theta1}), it follows from (\ref{4.6}) that
\begin{align*}
\frac{d\mathcal Z}{dt}\leq&C(1+\|v\|_{H^3}^2)\mathcal Z+C(1+\|v\|_{H^3}^2),\quad\forall t\in(0,t_0^*),
\end{align*}
and thus
\begin{align}
&\frac{d}{dt}(t^2\mathcal Z(t))=t^2\frac{d\mathcal Z}{dt}+2t\mathcal Z(t)\nonumber\\
\leq&C(1+\|v\|_{H^3}^2)t^2\mathcal Z(t)+C(1+\|v\|_{H^3}^2)t^2+2t\mathcal Z(t)\label{4.7}
\end{align}
for $t\in(0,t_0^*)$.
Thanks to (\ref{4.1}) and (\ref{4.4}), it holds that $\int_0^{t_0^*}t\mathcal X(t)dt\leq C,$
and thus, noticing that $\mathcal X\geq1$, we have
$$
\int_0^{t_0^*}t\mathcal Z(t)dt=\int_0^{t_0^*}t\log(\mathcal X(t))dt\leq C\int_0^{t_0^*}t\mathcal X(t)dt\leq C.
$$
Combing the above with (\ref{4.7}) gives
\begin{equation*}
t^2\mathcal Z(t)\leq Ce^{C\int_0^t(1+\|v\|_{H^3}^2)ds}\int_0^t[(1+\|v\|_{H^3}^2)s^2+2s\mathcal Z(t)]ds\leq C,
\end{equation*}
for any $t\in(0,t_0^*)$. Recalling the definitions of $\mathcal X$ and $\mathcal Z$, the above inequality implies
\begin{equation}\label{4.8}
\sup_{\frac{t_0^*}{2}\leq t\leq{t_0^*}}\mathcal X(t)\leq C.
\end{equation}

Thanks to the estimates (59), (69), (91), (103) and (113) in \cite{CAOTITI3}, one has
\begin{align}
&\sup_{0\leq t\leq T^*}(\|v\|_2^2+\|T\|_\infty^2+\|\nabla_H\bar v\|_2^2+\|\partial_zu\|_2^2+\|\eta\|_2^2+\|\theta\|_2^2)\nonumber\\
&+\int_0^{T^*}(\|\nabla v\|_2^2+\|\partial_zT\|_2^2+\|\Delta_H\bar v\|_2^2+\|\nabla\partial_zu\|_2^2+\|\nabla\eta\|_2^2+\|\nabla\theta\|_2^2)dt\leq C, \label{4.9}
\end{align}
and consequently, we have
\begin{equation*}
\int_0^{t}\|v\|_2(\|\nabla_H\bar v\|_{H^1(M)}+\|T\|_\infty+\|\eta\|_{H^1}+\|\theta\|_{H^1})ds\leq C,
\end{equation*}
and
\begin{align*}
\int_0^t&\left[\|T\|_\infty^4+\|\partial_zT\|_2^2+\|\bar v\|_2^2(1+\|\bar v\|_{H^1}^2)+(1+\|\partial_zu\|_2^2)\|\partial_zu\|_{H^1}^2\right.\nonumber\\
&\left.+(1+\|\eta\|_2^2)\|\eta\|_{H^1}^2+(1+\|\theta\|_2^2)\|\theta\|_{H^1}^2\right]ds\leq C,
\end{align*}
for any $t\in(0,T_*)$. By the aid of these two inequalities, using (\ref{4.8}), it follows from (\ref{4.6}) that
$$
\sup_{t_0^*/2\leq t<T^* }\mathcal Z(t)\leq C.
$$
Thus by (\ref{4.5}) we have
\begin{equation}
\sup_{t_0^*/2\leq t<T^* }\mathcal X(t)+\int_{t_0^*/2}^{T^*}\mathcal Y(t)dt\leq C. \label{4.10}
\end{equation}

It is clear that
$$
v(x,y,z,t)=\bar v(x,y,t)+\frac{1}{2h}\int_{-h}^h\int_{z'}^z\partial_zv(x,y,\xi,t)d\xi dz',
$$
and thus, it follows from elliptic estimates and Poincar\'e's inequality that
\begin{align*}
&\|\nabla_H\nabla v\|_2^2=\|\nabla^2_Hv\|_2^2+\|\nabla_H\partial_zv\|_2^2=\|\nabla^2_Hv\|_2^2+\|\nabla_Hu\|_2^2\\
\leq&C\|\nabla_H^2\bar v\|_2^2+\|\nabla_H^2u\|_2^2+\|\nabla_Hu\|_2^2)\\
\leq&C(\|\Delta_H\bar v\|_2^2+\|\nabla_H(\nabla_H^\bot\cdot u)\|_2^2+\|\nabla_H(\nabla_H\cdot u)\|_2^2+\|\nabla_H^\bot\cdot u\|_2^2+\|\nabla_H\cdot u\|_2^2)\\
\leq&C(\|\nabla_H\Delta_H\bar v\|_2^2+\|\nabla_H\eta\|_2^2+\|\nabla_H\theta\|_2^2+\|\nabla_HT\|_2^2+\|\eta\|_2^2+\|\theta\|_2^2
+\|T\|_2^2)\\
\leq&C(\|\nabla_H\Delta_H\bar v\|_2^2+\|\nabla\nabla_H\eta\|_2^2+\|\nabla\nabla_H\theta\|_2^2+\|\nabla\nabla_HT\|_2^2+\|\eta\|_2^2+\|\theta\|_2^2
+\|T\|_2^2)\\
\leq&C(\|\nabla_H\Delta_H\bar v\|_2^2+\|\Delta_H\eta\|_2^2+\|\Delta_H\theta\|_2^2+\|\Delta_HT\|_2^2+\|\nabla_H
\partial_z\eta\|_2^2+\|\nabla_H\partial_z\theta\|_2^2\\
&+\|\nabla_H\partial_zT\|_2^2
+\|\eta\|_2^2+\|\theta\|_2^2+\|T\|_2^2)\\
\leq&C(\mathcal X(t)+\|\eta\|_2^2+\|\theta\|_2^2+\|T\|_2^2)
\end{align*}
and
\begin{align*}
&\|\nabla^2_H\nabla v\|_2^2=\|\nabla_H^2\partial_zv\|_2^2+\|\nabla^3_Hv\|_2^2=\|\nabla_H^2u\|_2^2+\|\nabla_H^3v\|_2^2\\
\leq&C(\|\nabla_H\nabla_H\cdot u\|_2^2+\|\nabla_H\nabla_H^\bot\cdot u\|_2^2+\|\nabla_H^3\bar v\|_2^2+\|\nabla_H^3u\|_2^2)\\
\leq&C(\|\nabla_H\eta\|_2^2+\|\nabla_H\theta\|_2^2+\|\nabla_HT\|_2^2+\|\nabla_H\Delta\bar v\|_2^2+\|\nabla_H^2\nabla_H\cdot u\|_2^2+\|\nabla_H^2\nabla_H^\bot\cdot u\|_2^2)\\
\leq&C(\|\nabla_H\eta\|_2^2+\|\nabla_H\theta\|_2^2+\|\nabla_H\nabla T\|_2^2+\|\nabla_H\Delta\bar v\|_2^2+\|\nabla_H^2\eta\|_2^2+\|\nabla_H^2\theta\|_2^2+\|\nabla_H^2T\|_2^2)\\
\leq&C(\|\nabla_H\eta\|_2^2+\|\nabla_H\theta\|_2^2+\|\nabla_H\partial_zT\|_2^2+\|\nabla_H\Delta\bar v\|_2^2+\|\Delta_H\eta\|_2^2+\|\Delta_H\theta\|_2^2+\|\Delta_HT\|_2^2)\\
\leq&C(\mathcal X(t)+\|\nabla_H\eta\|_2^2+\|\nabla_H\theta\|_2^2).
\end{align*}
Combining these two estimates it follows from (\ref{4.9}) and (\ref{4.10}) that
\begin{align*}
&\sup_{t_0^*/2\leq t<T^*}\|v(t)\|_{H^2}^2+\int_{t_0^*/2}^{T^*}\|\nabla^3v(t)\|_2^2dt\nonumber\\
\leq&C\sup_{t_0^*/2\leq t<T^*}(\|v\|_2^2+\|\partial_z^2v\|_2^2+\|\nabla_H\nabla v\|_2^2)+C\int_{t_0^*/2}^{T^*}(+\|\nabla\partial_z^2v\|_2^2\|\nabla_H^2\nabla v\|_2^2)dt\nonumber\\
=&C\sup_{t_0^*/2\leq t<T^*}(\|v\|_2^2+\|\partial_zu\|_2^2+\mathcal X(t)+\|\eta\|_2^2+\|\theta\|_2^2+\|T\|_2^2)\nonumber\\
&+C\int_{t_0^*/2}^{T^*}(\|\nabla\partial_zu\|_2^2+\mathcal X(t)+\|\nabla_H\eta\|_2^2+\|\nabla_H\theta\|_2^2)dt\leq C,
\end{align*}
and thus
\begin{equation}\label{4.13}
\sup_{t_0^*/2\leq t<T^*}\|v\|_{H^2}^2+\int_{t_0^*/2}^{T^*}\|v\|_{H^3}^2dt\leq C.
\end{equation}

Thanks to (\ref{4.9}), (\ref{4.10}) and using the Poincar\'e inequality, it follows that
\begin{equation}\label{4.14}
\sup_{{t_0^*/2}\leq t< T^*}(\|T\|_2^2+\|\nabla_HT\|_{H^1}^2)+\int_{t_0^*/2}^{T^*}(\|\partial_zT\|_2^2+\|\partial_z\nabla_HT\|_{H^1}^2)dt\leq C.
\end{equation}
Applying the operator $\partial_z$ to (\ref{1.12}) and multiplying the resulting equation by $-\partial_z^3T$, then it follows after integrating by parts and using Lemma \ref{lem2.0} that
\begin{align*}
&\frac{1}{2}\frac{d}{dt}\int_\Omega|\partial_z^2T|^2dxdydz+\frac{1}{R_3}\int_\Omega|\partial_z^3T|^2
dxdydz\\
=&-\int_\Omega\left((\partial_z^2v\cdot\nabla_H)T+2(\partial_zv\cdot\nabla_H)\partial_zT-(\partial_z
\nabla_H
\cdot v)\left(\partial_zT+\frac{1}{h}\right)\right.\\
&-2(\nabla_H\cdot v)\partial_z^2T\bigg)\partial_z^2Tdxdydz\\
\leq&C(\|\partial_z^2v\|_3\|\nabla_HT\|_6+\|\partial_zv\|_\infty\|\nabla_H\partial_zT\|_2+\|\partial_z
\nabla_Hv\|_3\|\partial_zT\|_6\\
&+\|\partial_z\nabla_Hv\|_2+\|\nabla_Hv\|_\infty\|\partial_z^2T\|_2)
\|\partial_z^2T\|_2\\
\leq&C(\|v\|_{H^3}\|\nabla_H\nabla T\|_2+\|v\|_{H^3}\|\nabla\partial_zT\|_2+\|v\|_{H^2})\|\partial_z^2T\|_2\\
\leq&C(\|v\|_{H^3}\|\nabla_HT\|_{H^1}+\|v\|_{H^3}\|\partial_z^2T\|_2+\|v\|_{H^2})\|\partial_z^2T\|_2
\\
\leq&C\|v\|_{H^3}(\|\nabla_HT\|_{H^1}+1)+C\|v\|_{H^3}\|\partial_z^2T\|_2^2.
\end{align*}
By the aid of (\ref{4.1}), (\ref{4.13}) and (\ref{4.14}), it follows from the above inequality that
\begin{align}
&\sup_{0\leq t<T^*}\|\partial_z^2T\|_2^2+\int_{0}^{T^*}\|\partial_z^3T\|_2^2dt\nonumber\\
\leq&Ce^{\int_0^{T^*}\|v\|_{H^3}dt}\left(\|\partial_z^2T_0\|_2^2+\int_0^t\|v\|_{H^3}
(1+\|\nabla_HT\|_{H^2}^2)ds\right)\leq C. \label{4.15}
\end{align}

Combining (\ref{4.1}) with (\ref{4.13})--(\ref{4.15}), we obtain
\begin{equation*}
\sup_{0\leq t< T^*}(\|v\|_{H^2}^2+\|T\|_{H^2}^2)+\int_0^{T^*}(\|v\|_{H^3}^2+\|T\|_{H^3}^2)dt\leq C.
\end{equation*}
As a result, we apply Proposition \ref{prop2} to extend the strong solution $(v,T)$ beyond $T^*$, contradicting to the fact that $T^*$ is a finite maximal time of existence. This contradiction implies that $T^*=\infty$, and this completes the proof.
\end{proof}

\section{Appendix: regularities}\label{sec5}

In this appendix, we justify some necessary regularities used in the previous section, section \ref{sec4}. Let $(v,T)$ be a strong solution to system (\ref{1.10})--(\ref{1.16}) in $\Omega\times(0,t_0)$ with $0<t_0<\infty$. For any $t\in[0,t_0)$, we set
$$
K_0(t)=\sup_{0\leq s\leq t}(\|v\|_{H^2}^2+\|T\|_{H^2}^2)+\int_0^t(\|\nabla v\|_{H^2}^2+\|\partial_zT\|_{H^2}^2)ds.
$$
Recall the definition of the functions $u, \eta, \theta$ and $\zeta$
\begin{eqnarray}
&&u=\partial_zv,\qquad \zeta=\partial_zu, \nonumber\\
&&\eta=\nabla_H^\bot\cdot u=\partial_xu^2-\partial_yu^1,\label{eta}\\
&&\theta=\nabla_H\cdot u+R_1T=\partial_xu^1+\partial_yu^2+R_1T.\label{theta}
\end{eqnarray}
We are going to study the regularities of $u,\zeta,\eta,\theta$ and $\bar v$.

For convenience, we also use the notation $\textbf{x}=(x^1,x^2,x^3)$ to denote the spacial variables, that is using $x^1,x^2$ and $x^3$ to replace $x,y$ and $z$, respectively. We will use both $\textbf{x}$ and $(x,y,z)$ to denote the spacial variables. Set $e_1=(1,0,0)$, $e_2=(0,1,0)$ and $e_3=(0,0,1)$. For any spatial periodic function $f$ and $l\not=0$, we define the difference quotient operators $\delta_l^i, i=1,2,3$ as follows
$$
\delta_l^if(\textbf{x})=\frac{1}{l}(f(\textbf{x}+le_i)-f(\textbf{x})).
$$
Since we will use $\delta_l^3$ more frequently than $\delta_l^i, i=1,2$, we will often use $\delta_l$ instead of $\delta_l^3$.

Straightforward calculations show, for any periodic functions $f$ and $g$, that
\begin{eqnarray*}
&\delta_{l_1}\delta_{l_2}=\delta_{l_2}\delta_{l_1},\quad\nabla\delta_l=\delta_l\nabla,\\
&\delta_l^*=-\delta_{-l},\quad\mbox{ i.e. }\quad\int_\Omega\delta_lfgd\textbf{x}=-\int_\Omega f\delta_{-l}gd\textbf{x},\\
&\delta_l(fg)=f\delta_lg+g(\cdot+le_3)\delta_lf,\\
&\|\delta_lf\|_p\leq C\|\partial_zf\|_p,\quad\forall f\in W^{1,p}(\Omega), 1\leq p\leq\infty,
\end{eqnarray*}
where $\delta_l^*$ stands for the adjoint operator of $\delta_l$, and $C$ is an absolute constant.
The operators $\delta_l^i, i=1,2$ have the same properties as those of $\delta_l$.

\begin{lemma}
  \label{lem0}
Suppose that the spatial periodic function $f$ satisfies
$$
\sup_{0\leq s\leq t}(s^k\|\delta_l^if\|_2^2)+\int_0^{t}s^k\|\nabla\delta_l^if\|_2^2dt\leq K(t),
$$
for any $t\in(0,t_0)$ and for any $0<|l|<1$, where $K(t)$ is a bounded increasing function on $(0,t_0)$. Then
$$
\sup_{0\leq s\leq t}(s^k\|\partial_if\|_2^2)+\int_0^{t}s^k\|\nabla\partial_if\|_2^2dt\leq K(t),
$$
for any $t\in(0,t_0)$.
\end{lemma}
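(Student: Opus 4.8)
The plan is to pass to the limit $l\to0$ in the hypotheses, exploiting the standard fact that a uniform-in-$l$ bound on difference quotients $\delta_l^ig$ in $L^2$ forces the weak derivative $\partial_ig$ to lie in $L^2$ with $\|\partial_ig\|_2\le\sup_{0<|l|<1}\|\delta_l^ig\|_2$, combined with weak lower semicontinuity of the $L^2$ norm. Throughout I use the commutation $\nabla\delta_l^i=\delta_l^i\nabla$ and the adjoint identity $\int_\Omega(\delta_l^ig)\,\varphi\,d\textbf{x}=-\int_\Omega g\,(\delta_{-l}^i\varphi)\,d\textbf{x}$ recorded above, both valid without boundary contributions because all functions are spatially periodic.

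I would first treat the pointwise-in-time (supremum) bound. Fix $s\in(0,t)$ with $f(s)\in L^2(\Omega)$, which holds for a.e. $s$. The hypothesis gives $\|\delta_l^if(s)\|_2^2\le K(t)/s^k$ for every $0<|l|<1$, so $\{\delta_l^if(s)\}$ is bounded in $L^2(\Omega)$; extract $l_n\to0$ with $\delta_{l_n}^if(s)\rightharpoonup w$ in $L^2(\Omega)$. Testing against a smooth periodic $\varphi$ and using the adjoint identity, $\int_\Omega\delta_{l_n}^if(s)\,\varphi\,d\textbf{x}=-\int_\Omega f(s)\,\delta_{-l_n}^i\varphi\,d\textbf{x}\to-\int_\Omega f(s)\,\partial_i\varphi\,d\textbf{x}$, since $\delta_{-l_n}^i\varphi\to\partial_i\varphi$ uniformly. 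Hence $w=\partial_if(s)$ in the distributional sense, so $\partial_if(s)\in L^2(\Omega)$, and weak lower semicontinuity gives $\|\partial_if(s)\|_2\le\liminf_n\|\delta_{l_n}^if(s)\|_2$. Multiplying by $s^k$ yields $s^k\|\partial_if(s)\|_2^2\le K(t)$ for a.e. $s$, and taking the essential supremum over $s\in(0,t)$ produces the first half of the conclusion.

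For the integral bound I work in the space-time Hilbert space $L^2((0,t)\times\Omega)$. Since $\nabla\delta_l^if=\delta_l^i\nabla f$, the hypothesis says $s^{k/2}\delta_l^i\nabla f$ is bounded in $L^2((0,t)\times\Omega)$ by $\sqrt{K(t)}$, uniformly in $l$; extract $l_n\to0$ with $s^{k/2}\delta_{l_n}^i\nabla f\rightharpoonup G$ weakly there. It remains to identify $G=s^{k/2}\nabla\partial_if$. On each subinterval $(\varepsilon,t)$ with $\varepsilon>0$ the weight $s^{k/2}$ is bounded above and away from zero, so dividing by it gives $\delta_{l_n}^i\nabla f\rightharpoonup s^{-k/2}G$ in $L^2((\varepsilon,t)\times\Omega)$; testing against space-time smooth periodic functions and using the adjoint identity exactly as before identifies the distributional limit as $\partial_i\nabla f=\nabla\partial_if$, whence $G=s^{k/2}\nabla\partial_if$ on $(\varepsilon,t)\times\Omega$. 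Letting $\varepsilon\downarrow0$ fixes $G$ on all of $(0,t)\times\Omega$, and weak lower semicontinuity then gives $\int_0^ts^k\|\nabla\partial_if\|_2^2\,ds=\|G\|_{L^2((0,t)\times\Omega)}^2\le\liminf_n\|s^{k/2}\delta_{l_n}^i\nabla f\|_{L^2((0,t)\times\Omega)}^2\le K(t)$, which is the second half of the conclusion.

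The only delicate point is the degeneracy of the weight $s^k$ at $s=0$: it obstructs a direct identification of the weak limit with the derivative on the full interval, which is precisely why the integral estimate is handled by localizing to $(\varepsilon,t)$ and then sending $\varepsilon\to0$. The supremum estimate sidesteps this entirely, since for a fixed time the weight is merely a positive constant. Everything else is a routine application of the difference-quotient characterization of Sobolev regularity recorded in the excerpt.
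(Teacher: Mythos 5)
Your proof is correct and follows essentially the same route as the paper's: uniform $L^2$ bounds on the difference quotients, weak compactness of closed balls in $L^2(\Omega)$ and $L^2(\Omega\times(0,t))$, identification of the weak limits as distributional derivatives via the adjoint identity $\int_\Omega \delta_l^i f\, \varphi \,d\textbf{x} = -\int_\Omega f\, \delta_{-l}^i\varphi\, d\textbf{x}$, and weak lower semicontinuity of the norms. The only difference is bookkeeping: the paper absorbs the weight at the outset by setting $g=s^{k/2}f$, and since the weight is purely temporal it commutes with all spatial operations, so the limit is identified as $\partial_i\nabla g$ directly on all of $(0,t)\times\Omega$ and your $\varepsilon$-localization step (needed because you keep the degenerate weight explicit) becomes unnecessary.
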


\begin{proof} Set $g=s^{k/2}f$, then
$$
\sup_{0\leq s\leq t}\|\delta_l^ig\|_2^2+\int_0^{t}\|\nabla\delta_l^ig\|_2^2dt\leq K(t),
$$
for any $t\in(0,t_0)$. Given $t\in(0,t_0)$, using the sequentially weak compactness of closed balls in $L^2(\Omega)$ and $L^2(\Omega\times(0,t))$, any sequence $l_n$ with $l_n\rightarrow0$ as $n\rightarrow\infty$ has a subsequence, still denoted by $l_n$, such that
$$
\delta_{l_n}^ig(\cdot,t)\rightharpoonup\Phi,\mbox{ in }L^2(\Omega),
$$
and
$$
\delta_{l_n}^i\nabla g\rightharpoonup\Psi,\mbox{ in }L^2(\Omega\times(0,t)).
$$
Using the properties of $\delta_l^i$ stated above, for any two spatial periodic functions $\phi\in C^\infty(\mathbb R^3)$ and $\psi\in C^\infty(\mathbb R^3\times[0,t])$, one has
$$
\int_\Omega\delta_{l_n}^ig(\cdot,t)\phi d\textbf{x}=-\int_\Omega g(\cdot,t)\delta_{-l_n}^i\phi d\textbf{x}\rightarrow
-\int_\Omega g(\cdot,t)\partial_i\phi d\textbf{x}
$$
and
$$
\int_0^t\int_\Omega\delta_{l_n}^i\nabla g\psi d\textbf{x}ds=-\int_0^t\int_\Omega \nabla g\delta_{-l_n}^i\psi d\textbf{x}ds\rightarrow-\int_0^t\int_\Omega \nabla g\partial_i\psi d\textbf{x}ds.
$$
On the other hand, the weak convergence of $\delta_{l_n}^ig(\cdot,t)$ and $\delta_{l_n}^i\nabla g$ in $L^2(\Omega)$ and $L^2(\Omega\times(0,t))$, respectively, implies
$$
\int_\Omega\delta_{l_n}^ig(\cdot,t)\phi d\textbf{x}\rightarrow\int_\Omega\Phi\phi d\textbf{x},
$$
and
$$
\int_0^t\int_\Omega\delta_{l_n}\nabla g\psi d\textbf{x}ds\rightarrow\int_0^t\int_\Omega\Psi\psi d\textbf{x}ds.
$$
Therefore, we have
$$
\int_\Omega\Phi\phi dx=-\int_\Omega g(\cdot,t)\partial_i\phi d\textbf{x},\quad\int_0^t\int_\Omega\Psi\psi d\textbf{x}ds=-\int_0^t\int_\Omega \nabla g\partial_i\psi d\textbf{x}ds,
$$
which imply
$$
\Phi=\partial_ig(\cdot,t),\qquad\Psi=\partial_i\nabla g.
$$
Combing the above statements, we have proven that for any sequence $l_n$, with $l_n\rightarrow0$, it has a subsequence, still denoted by $l_n$, such that
$$
\delta_{l_n}^ig(\cdot,t)\rightharpoonup\partial_ig(\cdot,t),\mbox{ in }L^2(\Omega),
$$
and
$$
\delta_{l_n}^i\nabla g\rightharpoonup\partial_i\nabla g,\mbox{ in }L^2(\Omega\times(0,t)).
$$
Thanks to the above two weak convergence, using the weakly lower semi-continuity of the norms and by assumption, we have
\begin{align*}
\|\partial_ig(\cdot,t)\|_2^2+\int_0^t\|\partial_i\nabla g\|_2^2ds
  \leq \varliminf_{n\rightarrow\infty}\left(\|\delta_{l_n}^ig(\cdot,t)\|_2^2+\int_0^t\|\delta_{l_n}^i
  \nabla g\|_2^2ds\right)\leq K(t),
\end{align*}
which, recalling the definition of $g$, gives
\begin{equation*}
  t^k\|\partial_if(\cdot,t)\|_2^2+\int_0^ts^k\|\partial_i\nabla f\|_2^2ds\leq K(t),
\end{equation*}
for any $t\in(0,t_0)$. The conclusion follows from taking the supremum with respect to time $t$. This completes the proof.
\end{proof}

We first consider the regularities of $u$, that is the following proposition.

\begin{proposition}\label{lem5.1}
Let $(v,T)$ be a strong solution to system (\ref{1.10})--(\ref{1.16}) in $\Omega\times(0,t_0)$ and set $u=\partial_zv$. Then
\begin{equation*}
\sqrt tu\in L^\infty(0,t_0; H^2(\Omega))\cap L^2(0,t_0; H^3(\Omega)),
\quad\sqrt t\partial_tu\in L^2(0,t_0; H^1(\Omega))
\end{equation*}
and
\begin{equation*}
\sup_{0\leq s\leq t}(s \|  u\|_{H^2}^2)+\int_0^ts (\|  u\|_{H^3}^2+\|\partial_tu\|_{H^1}^2)ds
\leq K_1(t)
\end{equation*}
for any $t\in(0,t_0)$, where $K_1(t)$ is a bounded increasing function on $(0,t_0)$.
\end{proposition}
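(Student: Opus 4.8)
The plan is to derive the evolution equation for $u=\partial_z v$ and then run a $t$-weighted second-order energy estimate, upgrading the a priori regularity $u\in L^\infty(0,t_0;H^1)\cap L^2(0,t_0;H^2)$ built into the definition of a strong solution to the stated $H^2/H^3$ bounds away from the initial time. Applying $\partial_z$ to (\ref{1.10}) and using $\partial_z v=u$, $\partial_z\big(\int_{-h}^z\nabla_H\cdot v\,d\xi\big)=\nabla_H\cdot v$ and $\partial_z\big(\int_{-h}^z T\,d\xi\big)=T$, the surface pressure head drops out since $p_s$ is $z$-independent, and one finds
\begin{equation*}
\partial_t u+L_1u=G,\quad G:=-(u\cdot\nabla_H)v-(v\cdot\nabla_H)u+(\nabla_H\cdot v)u+\Big(\int_{-h}^z\nabla_H\cdot v\,d\xi\Big)\partial_z u-f_0k\times u+\nabla_H T,
\end{equation*}
where $u$ is periodic and odd in $z$.

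First I would carry out the $H^2$ estimate: apply $\nabla$ to the $u$-equation, test against $-\nabla\Delta u$, and invoke Lemma \ref{lem2.0} to rewrite $\frac{d}{dt}\|\Delta u\|_2^2$ so that the diffusion yields $\frac1{R_1}\|\nabla_H\Delta u\|_2^2+\frac1{R_2}\|\partial_z\Delta u\|_2^2$ on the left. Estimating $-\int_\Omega\nabla G\cdot\nabla\Delta u\,dxdydz$ by Hölder, the Sobolev and Gagliardo--Nirenberg inequalities, and the anisotropic inequalities of Lemma \ref{lem3.2} for the vertical-average term $\int_{-h}^z\nabla_H\cdot v\,d\xi$, every occurrence of the top-order factor $\|\nabla\Delta u\|_2$ is absorbed into the dissipation by Young's inequality, while every occurrence of $\|v\|_{H^3}$ is retained as a coefficient. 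This produces a differential inequality
\begin{equation*}
\frac{d}{dt}\|\Delta u\|_2^2+c\,\|\nabla\Delta u\|_2^2\le a(t)\,\|\Delta u\|_2^2+b(t),
\end{equation*}
in which, thanks to the definition of $K_0(t)$ together with $v\in L^\infty(0,t_0;H^2)\cap L^2(0,t_0;H^3)$, both $a$ and $b$ are integrable in time on $[0,t]$.

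The heart of the argument is the weight. Since only $v_0\in H^2$, we cannot control $\|\Delta u\|_2^2$ at $t=0$, so I multiply the inequality by $s$ and use $\frac{d}{ds}\big(s\|\Delta u\|_2^2\big)=s\frac{d}{ds}\|\Delta u\|_2^2+\|\Delta u\|_2^2$ to obtain
\begin{equation*}
\frac{d}{ds}\big(s\|\Delta u\|_2^2\big)+c\,s\|\nabla\Delta u\|_2^2\le a(s)\big(s\|\Delta u\|_2^2\big)+\big(s\,b(s)+\|\Delta u\|_2^2\big).
\end{equation*}
The crucial point is that the extra \emph{unweighted} term $\|\Delta u\|_2^2$ is time-integrable because $u\in L^2(0,t_0;H^2)$, so that $\int_0^t\|\Delta u\|_2^2\,ds\le CK_0(t)$; with $s\,b(s)\in L^1$ this makes the forcing a bounded function of $t$. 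Applying Gronwall's inequality to $s\mapsto s\|\Delta u\|_2^2$ (with integrable coefficient $a$) gives $\sup_{0\le s\le t}s\|\Delta u\|_2^2+\int_0^t s\|\nabla\Delta u\|_2^2\,ds\le K_1(t)$, and combining with the already-known control of $\|u\|_{H^1}$ (from $v\in L^\infty(0,t_0;H^2)$) yields the full weighted $H^2/H^3$ bound. The bound on $\sqrt t\,\partial_t u$ is then read off directly from $\partial_t u=-L_1u+G$ by estimating $\|G\|_{H^1}$ through $\|u\|_{H^2},\|v\|_{H^2},\|v\|_{H^3}$ and $\|T\|$, all now controlled.

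The main obstacle is rigor rather than the formal scheme: a priori $u$ is not known to lie in $L^2(0,t_0;H^3)$ with $\partial_t u\in L^2(0,t_0;H^1)$, which is precisely what Lemma \ref{lem2.0} and the pairing against $\nabla\Delta u$ presuppose, so the estimate is circular as stated. I would resolve this by performing the entire computation on the regularized solutions $(v_\varepsilon,T_\varepsilon)$ of sections \ref{sec2}--\ref{sec3}, which do enjoy $L^2(0,t_0;H^3)$ regularity with $\partial_t\in L^2(0,t_0;H^1)$, deriving the weighted estimate with constants independent of $\varepsilon$, and passing to the limit $\varepsilon\to0$ via weak lower semicontinuity of the norms; alternatively, one replaces $\partial_z$ by the difference quotient $\delta_l$, runs the estimates at the level of $\delta_l u$ (which needs only the available $H^2$ regularity), and invokes Lemma \ref{lem0} to convert the uniform-in-$l$ bounds into the claimed derivative bounds. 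The most delicate analytic point within the energy estimate is the vertical-average nonlinearity $\big(\int_{-h}^z\nabla_H\cdot v\,d\xi\big)\partial_z u$, whose top-order contribution must be split via Lemma \ref{lem3.2} so that the genuinely highest-order factor is always the one absorbed into the dissipation while the remaining factors stay in the time-integrable coefficients.
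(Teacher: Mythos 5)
Your scheme is the paper's scheme: the same equation for $u=\partial_z v$, the same $t$-weighted second-order energy estimate with coefficient $C(1+\|v\|_{H^3}^2)\in L^1(0,t_0)$, the same observation that the extra unweighted term produced by differentiating the weight is time-integrable because $u\in L^2(0,t_0;H^2(\Omega))$, and your ``alternative'' rigor fix --- difference quotients $\delta_l$ plus Lemma \ref{lem0} --- is exactly what the paper does; you correctly diagnosed that the formal computation is circular without it. The organizational difference is that the paper runs the estimate in two stages rather than your single monolithic one: first the vertical difference quotient $\delta_l^3$, where the key structural point is that $\delta_l\big(\int_{-h}^z\nabla_H\cdot v\,d\xi\big)$ collapses to (a bound by) $\nabla_H\cdot v$ itself, so all commutators are controlled by $\|v\|_{H^3}$ alone and the stage closes using only the strong-solution norms, yielding $\sup_s s\|\partial_z u\|_{H^1}^2+\int_0^t s\|\partial_z u\|_{H^2}^2\,ds\le K_1'(t)$; then the horizontal derivatives, treating $\partial_t u+L_1u=f_1$ as a linear equation with forcing, where the bound $\int_0^t s\|\nabla f_1\|_2^2\,ds<\infty$ genuinely requires the stage-one output (e.g.\ the factor $\|\nabla^2 v\|_2\|\partial_z u\|_\infty\le C\|v\|_{H^2}\|\partial_z u\|_{H^2}$ is not controlled by $K_0(t)$ alone). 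Your one-pass version forces you to handle the term $\big(\int_{-h}^z\nabla_H\nabla_H\cdot v\,d\xi\big)\partial_z u$ without that prior bound; it can be made to close by interpolation (putting part of the $\partial_z u$ factor into the weighted energy and exploiting $\|v\|_{H^3}^2\in L^1_t$), but this is precisely the delicate point you flagged, and the paper's two-stage ordering is what makes it cheap. Incidentally, Lemma \ref{lem3.2} is not actually needed here --- with $\|v\|_{H^3}$ available as an $L^2$-in-time coefficient, plain Sobolev embeddings suffice, as in the paper.

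One concrete caveat about your \emph{preferred} fix: the regularized solutions $(v_\varepsilon,T_\varepsilon)$ of Sections \ref{sec2}--\ref{sec3} are constructed only on the uniform short interval $(0,t_0^*)$, whereas Proposition \ref{lem5.1} is stated --- and is used in Section \ref{sec4} --- on an arbitrary interval $(0,t_0)$ of existence, up to the maximal time. Passing through the $\varepsilon$-limit therefore yields the weighted bounds only on $(0,t_0^*)$ unless you add a restart-and-patch argument (re-solving the regularized problems from later initial times $\tau$, where $\|(v(\tau),T(\tau))\|_{H^2}$ is uniformly bounded, identifying the limits with the given solution via the uniqueness of Proposition \ref{prop2}, and gluing the weighted estimates). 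That is workable but fiddly; the difference-quotient route, which you offered as the alternative, applies directly to the given strong solution on its whole interval, which is presumably why the paper adopts it.
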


\begin{proof}
Strong solution $(v,T)$ has the following regularity properties
$v\in L^2(0,t_0; H^3(\Omega))$ and $\partial_tv\in L^2(0,t_0; H^1(\Omega))$. One can differentiate equation (\ref{1.10}) with respect to $z$ to derive
\begin{eqnarray}
&\partial_tu+L_1u+(v\cdot\nabla_H)u-\left(\int_{-h}^z\nabla_H\cdot v(x,y,\xi,t)d\xi\right)\partial_zu\nonumber\\
&+(u\cdot\nabla_H)v-(\nabla_H\cdot v)u+f_0k\times u-\nabla_HT=0. \label{nnw1}
\end{eqnarray}
Note that $\partial_tu\in L^2(0,t_0;L^2(\Omega))$ and $u\in L^2(0,t_0; H^2(\Omega))$. Multiplying the above equation by $-\delta_l^*\delta_l\Delta u$ and applying Lemma \ref{lem2.0}, it follows from the Sobolev embedding inequality and the Cauchy-Schwarz inequality that
\begin{align*}
&\frac{1}{2}\frac{d}{dt}\int_\Omega|\delta_l\nabla u|^2d\textbf{x}+\int_\Omega\left(\frac{1}{R_1}|\delta_l\nabla_H\nabla u|^2+\frac{1}{R_2}|\delta_l\partial_z\nabla u|^2\right)d\textbf{x}\\
=&\int_\Omega[(u\cdot\nabla_H)v-(\nabla_H\cdot v)u-\nabla_HT]\delta_l^*\delta_l\Delta ud\textbf{x}\\
&+\int_\Omega\left[(v\cdot\nabla_H)u-\left(\int_{-h}^z\nabla_H\cdot v(x,y,\xi,t)d\xi\right)\partial_zu\right]\delta_l^*\delta_l\Delta ud\textbf{x}\\
=&\int_\Omega\delta_l[(u\cdot\nabla_H)v-(\nabla_H\cdot v)u-\nabla_HT] \delta_l\Delta ud\textbf{x}\\
&+\int_\Omega\left[(\delta_lv\cdot\nabla_H)u(\textbf{x}+le_3,t)-\delta_l\left(\int_{-h}^z\nabla_H\cdot vd\xi\right)\partial_zu(\textbf{x}+le_3,t)\right]\delta_l\Delta ud\textbf{x}\\
&+\int_\Omega\left[(v\cdot\nabla_H)\delta_lu-\left(\int_{-h}^z\nabla_H\cdot v(x,y,\xi,t)d\xi\right)\partial_z\delta_lu\right]\delta_l\Delta ud\textbf{x}\\
=&\int_\Omega\delta_l[(u\cdot\nabla_H)v-(\nabla_H\cdot v)u-\nabla_HT]\delta_l\Delta ud\textbf{x}\\
&+\int_\Omega\left[(\delta_lv\cdot\nabla_H)u(\textbf{x}+le_3,t)-\delta_l\left(\int_{-h}^z\nabla_H\cdot vd\xi\right)\partial_zu(\textbf{x}+le_3,t)\right]\delta_l\Delta ud\textbf{x}\\
&-\int_\Omega\left[(\nabla v\cdot\nabla_H)\delta_lu-\left(\int_{-h}^z\nabla\nabla_H\cdot v(x,y,\xi,t)d\xi\right)\partial_z\delta_lu\right]\delta_l\nabla ud\textbf{x}\\
&-\int_\Omega\left[(v\cdot\nabla_H)\delta_l\nabla u-\left(\int_{-h}^z\nabla_H\cdot v(x,y,\xi,t)d\xi\right)\partial_z\delta_l\nabla u\right]\delta_l\nabla ud\textbf{x}\\
=&\int_\Omega\delta_l[(u\cdot\nabla_H)v-(\nabla_H\cdot v)u-\nabla_HT]\delta_l\Delta ud\textbf{x}\\
&+\int_\Omega\left[(\delta_lv\cdot\nabla_H)u(\textbf{x}+le_3,t)-\delta_l\left(\int_{-h}^z\nabla_H\cdot vd\xi\right)\partial_zu(\textbf{x}+le_3,t)\right]\delta_l\Delta ud\textbf{x}\\
&-\int_\Omega\left[(\nabla v\cdot\nabla_H)\delta_lu-\left(\int_{-h}^z\nabla\nabla_H\cdot v(x,y,\xi,t)d\xi\right)\partial_z\delta_lu\right]\delta_l\nabla ud\textbf{x}\\
\leq&C\|\delta_l[(u\cdot\nabla_H)v-(\nabla_H\cdot v)u]\|_2\|\delta_l\Delta u\|_2+C\|\delta_l\nabla_HT\|_2\|\delta_l\Delta u\|_2\\
&+C\bigg(\|\delta_lv\|_\infty\|\nabla_Hu\|_2
+\bigg\|\delta_l\bigg(\int_{-h}^z\nabla_H\cdot vd\xi\bigg)\bigg\|_\infty\|\partial_zu\|_2\bigg)\|\delta_l\Delta u\|_2\\
&+C(\|\nabla v\|_\infty\|\delta_l\nabla u\|_2^2+\|\nabla^2v\|_3\|\delta_l\nabla u\|_2\|\delta_l\nabla u\|_6)\\
\leq&C\|\partial_z[(u\cdot\nabla_H)v-(\nabla_H\cdot v)u]\|_2\|\delta_l\Delta u\|_2+C\|\partial_z\nabla_HT\|_2\|\delta_l\Delta u\|_2\\
&+\left(\|\nabla v\|_\infty\|\nabla_Hu\|_2+\left\|\partial_z\left(\int_{-h}^z\nabla_H\cdot vd\xi\right)\right\|_\infty\|\partial_zu\|_2\right)\|\delta_l\Delta u\|_2\\
&+C(\|\nabla v\|_\infty\|\delta_l\nabla u\|_2^2+\|\nabla^2v\|_3\|\delta_l\nabla u\|_2\|\delta_l\nabla^2u\|_2)\\
\leq&C\left[\int_\Omega(|\nabla u|^2|\nabla v|^2+|u|^2|\nabla^2v|^2)d\textbf{x}\right]^{1/2}\|\delta_l\Delta u\|_2+C\|\nabla^2T\|_2\|\delta_l\Delta u\|_2\\
&+C(\|v\|_{H^3}\|v\|_{H^2}\|\delta_l\Delta u\|_2+\|v\|_{H^3}\|\delta_l\nabla u\|_2^2+\|v\|_{H^3}\|\delta_l\nabla u\|_2\|\delta_l\nabla^2u\|_2)\\
\leq&C(\|\nabla v\|_\infty\|\nabla u\|_2+\|u\|_\infty\|\nabla^2v\|_2+\|\nabla^2T\|_2)\|\delta_l\Delta u\|_2\\
&+C(\|v\|_{H^3}\|v\|_{H^2}\|\delta_l\Delta u\|_2+\|v\|_{H^3}\|\delta_l\nabla u\|_2^2+\|v\|_{H^3}\|\delta_l\nabla u\|_2\|\delta_l\nabla^2u\|_2)\\
\leq&C(\|v\|_{H^2}\|v\|_{H^3}+\|T\|_{H^2})\|\delta_l\Delta u\|_2+C(\|v\|_{H^3}\|v\|_{H^2}\|\delta_l\Delta u\|_2\\
&+\|v\|_{H^3}\|\delta_l\nabla u\|_2^2+\|v\|_{H^3}\|\delta_l\nabla u\|_2\|\delta_l\nabla^2u\|_2)\\
\leq&\sigma\|\delta_l\nabla^2 u\|_2^2+C_\sigma(1+\|v\|_{H^3}^2)(\|\delta_l\nabla u\|_2^2+\|v\|_{H^2}^2+\|T\|_{H^2}^2),
\end{align*}
with a sufficiently small positive constant $\sigma$, and thus
\begin{align*}
\frac{d}{dt}\|\delta_l\nabla u\|_2^2+\frac{1}{R}\|\delta_l\nabla^2u\|_2^2
\leq&C(1+\|v\|_{H^3}^2)(\|\delta_l\nabla u\|_2^2+\|v\|_{H^2}^2+\|T\|_{H^2}^2)
\end{align*}
with $R=R_1+R_2$,
from which we obtain
\begin{align*}
&\frac{d}{dt}(t\|\delta_l\nabla u\|_2^2)+\frac{1}{R}t\|\delta_l\nabla^2u\|_2^2\\
\leq&C(1+\|v\|_{H^3}^2)t(\|\delta_l\nabla u\|_2^2+\|v\|_{H^2}^2+\|T\|_{H^2}^2)+\|\delta_l\nabla u\|_2^2\\
\leq&C(1+\|v\|_{H^3}^2)t(\|\delta_l\nabla u\|_2^2+\|v\|_{H^2}^2+\|T\|_{H^2}^2)+C\|\partial_z\nabla u\|_2^2\\
\leq&C(1+\|v\|_{H^3}^2)t\|\delta_l\nabla u\|_2^2+C(1+\|v\|_{H^3}^2)(\|v\|_{H^2}^2+\|T\|_{H^2}^2)(t+1).
\end{align*}
Integrating this inequality with respect to $t$ and applying Lemma \ref{lem0}, we obtain $\sqrt t\partial_zu\in L^\infty(0,t_0;H^1(\Omega))\cap L^2(0,t_0; H^2(\Omega))$ and
\begin{align*}
&\sup_{0\leq s\leq t}(s\|\nabla\partial_zu\|_2^2)+\int_0^ts\|\nabla^2\partial_zu\|_2^2ds\\
\leq&Ce^{C\int_0^t(1+\|v\|_{H^3}^2)ds}\int_0^t(1+\|v\|_{H^3}^2)(\|v\|_{H^2}^2+\|T\|_{H^2}^2)
(s+1)ds\\
\leq&Ce^{C(t+K_0(t))}(K_0(t)t+K_0^2(t))(t+1),
\end{align*}
and therefore
\begin{align}
&\sup_{0\leq s\leq t}(s\| \partial_zu\|_{H^1}^2)+\int_0^ts\|\partial_zu\|_{H^2}^2ds\leq K_1'(t)\label{nnw2}
\end{align}
for all $t\in(0,t_0)$.

Set
\begin{align*}
f_1(x,y,z,t)=&-(v\cdot\nabla_H)u+\left(\int_{-h}^z\nabla_H\cdot v(x,y,\xi,t)d\xi\right)\partial_zu\nonumber\\
&-(u\cdot\nabla_H)v+(\nabla_H\cdot v)u-f_0k\times u+\nabla_HT.
\end{align*}
Then it follows that
\begin{align*}
\|\nabla f_1\|_2^2\leq&\int_\Omega\bigg[|v|^2|\nabla^2u|^2+|\nabla v|^2|\nabla u|^2+\left(\int_{-h}^h|\nabla^2 v|d\xi\right)^2|\partial_zu|^2\\
&+\left(\int_{-h}^h|\nabla v|d\xi\right)^2|\nabla\partial_zu|^2+|u|^2|\nabla^2v|^2+|\nabla u|^2+|\nabla^2T|^2\bigg]d\textbf{x}\\
\leq&C(\|v\|_\infty^2\|\nabla^2u\|_2^2+\|\nabla v\|_\infty^2\|\nabla u\|_2^2+\|\nabla^2v\|_2^2\|\partial_zu\|_\infty^2\\
&+\|\nabla v\|_\infty^2\|\nabla\partial_zu\|_2+\|u\|_\infty^2\|\nabla^2v\|_2^2+\|\nabla u\|_2^2+\|\nabla^2T\|_2^2)\\
\leq&C(\|v\|_{H^2}^2\|v\|_{H^3}^2+\|v\|_{H^2}^2\|\partial_z u\|_{H^2}^2+\|v\|_{H^3}^2\|\partial_zu\|_{H^1}^2+\|v\|_{H^2}^2+\|T\|_{H^2}^2),
\end{align*}
and thus
\begin{align}
\int_0^ts\|\nabla f_1\|_2^2ds\leq&C\int_0^ts(\|v\|_{H^2}^2\|v\|_{H^3}^2+\|v\|_{H^3}^2\|\partial_zu\|_{H^1}^2
\nonumber\\
&+\|v\|_{H^2}^2\|\partial_z u\|_{H^2}^2+\|v\|_{H^2}^2+\|T\|_{H^2}^2)ds\nonumber\\
=&C\int_0^ts(\|v\|_{H^2}^2\|v\|_{H^3}^2+\|v\|_{H^2}^2+\|T\|_{H^2}^2)ds\nonumber\\
&+C\int_0^ts(\|v\|_{H^3}^2\|\partial_zu\|_{H^1}^2+\|v\|_{H^2}^2\|\partial_z u\|_{H^2}^2)ds\nonumber\\
\leq&Ct(K_0^2(t)+K_0(t)t)+CK_0(t)K_1'(t)\nonumber\\
=&CK_0(t)(K_0(t)t+K_1'(t)+t^2)=:K_1''(t).\label{nnw3}
\end{align}
Note that $u$ satisfies equation
\begin{equation}\label{nnw5}
\partial_tu+L_1u=f_1.
\end{equation}
Recalling that $u\in L^2(0,t_0; H^2(\Omega))$ and $\partial_tu\in L^2(0;t_0; L^2(\Omega))$. Multiplying the above equation by $-(\delta_l^i)^*\delta_l^i\Delta u, i=1,2$, by Lemma \ref{lem2.0}, we obtain
\begin{align*}
&\frac{d}{dt}\int_\Omega|\delta_l^i\nabla u|^2dxdydz+\int_\Omega\left(\frac{1}{R_1}|
\delta_l^i\nabla_H\nabla u|^2+\frac{1}{R_2}|\delta_l^i\partial_z\nabla u|^2\right)dxdydz\\
=&-\int_\Omega\delta_l^if_1\delta_l^i\Delta u\leq\eta\int_\Omega|\delta_l^i\Delta u|^2dxdydz
+C\int_\Omega|\delta_l^if_1|^2dxdydz,
\end{align*}
and thus
\begin{equation*}
\frac{d}{dt}\|\delta_l^i\nabla u\|_2^2+\frac{1}{R}\|\delta_l^i\nabla^2 u\|_2^2\leq C\|\delta_l^if_1\|_2^2\leq C\|\partial_if_1\|_2^2
\end{equation*}
with $R=R_1+R_2$, from which we obtain
\begin{align*}
\frac{d}{dt}(t\|\delta_l^i\nabla u\|_2^2)+\frac{1}{R}t\|\delta_l^i\nabla^2 u\|_2^2\leq
Ct\|\nabla f_1\|_2^2+\|\delta_l^i\nabla u\|_2^2.
\end{align*}
Integrating this inequality in $t$, thanks to (\ref{nnw3}) and applying Lemma \ref{lem0}, we then obtain
\begin{align*}
&\sup_{0\leq s\leq t}(s \|\nabla_H\nabla u\|_2^2)+\int_0^ts \|\nabla_H\nabla^2 u\|_2^2ds\\
\leq&C\int_0^ts \|\nabla f_1\|_2^2ds+C\int_0^ts\|\nabla_H\nabla u\|_2^2ds\leq C(K_0(t)+K_1''(t)).
\end{align*}
Combining this with (\ref{nnw2}) and using (\ref{nnw3}), (\ref{nnw5}), we obtain
\begin{equation}\label{nnw4}
\sup_{0\leq s\leq t}(s \|  u\|_{H^2}^2)+\int_0^ts (\|  u\|_{H^3}^2+\|\partial_tu\|_{H^1}^2)ds
\leq K_1(t)
\end{equation}
for all $t\in(0,t_0)$. This completes the proof.
\end{proof}

Next, we establish the regularity properties of $\zeta$ as stated in the following proposition.

\begin{proposition}\label{lem5.2}
Let $(v,T)$ be a strong solution to system (\ref{1.10})--(\ref{1.16}) in $\Omega\times(0,t_0)$ and set $\zeta=\partial_z^2v$. Then
\begin{equation*}
t\zeta\in L^\infty(0,t_0; H^2(\Omega))\cap L^2(0,t_0; H^3(\Omega)),
\quad t\partial_t\zeta\in L^2(0,t_0; H^1(\Omega))
\end{equation*}
and
\begin{equation*}
\sup_{0\leq s\leq t}(s^2\| \zeta\|_{H^2}^2)+\int_0^ts^2(\| \zeta\|_{H^3}^2+\|\partial_t\zeta\|_{H^1}^2)ds,
\leq K_2(t)
\end{equation*}
for any $t\in(0,t_0)$, where $K_2(t)$ is an increasing bounded function on $(0,t_0)$.
\end{proposition}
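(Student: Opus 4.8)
The plan is to follow the two-stage difference-quotient scheme of Proposition \ref{lem5.1} verbatim, but carrying one additional vertical derivative and, correspondingly, one extra power of the time weight ($s^2$ in place of $s$). Differentiating the equation (\ref{nnw1}) for $u$ once more in $z$, and using $\partial_z\int_{-h}^z\nabla_H\cdot v\,d\xi=\nabla_H\cdot v$, the quantity $\zeta=\partial_zu=\partial_z^2v$ satisfies
\begin{align*}
&\partial_t\zeta+L_1\zeta+(v\cdot\nabla_H)\zeta-\left(\int_{-h}^z\nabla_H\cdot v\,d\xi\right)\partial_z\zeta+2(u\cdot\nabla_H)u\\
&\quad+(\zeta\cdot\nabla_H)v-2(\nabla_H\cdot v)\zeta-(\nabla_H\cdot u)u+f_0k\times\zeta-\nabla_H\partial_zT=0.
\end{align*}
Proposition \ref{lem5.1} supplies $\sqrt s\,u\in L^\infty(0,t_0;H^2)\cap L^2(0,t_0;H^3)$ and $\sqrt s\,\partial_tu\in L^2(0,t_0;H^1)$, hence $\sqrt s\,\zeta\in L^\infty(0,t_0;H^1)\cap L^2(0,t_0;H^2)$ and $\sqrt s\,\partial_t\zeta\in L^2(0,t_0;L^2)$. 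This is precisely the a priori regularity that legitimizes the difference-quotient computations and the use of Lemma \ref{lem2.0} below, all manipulations being carried with the weight so that the power $s^2$ absorbs the singularity at $t=0$.

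Stage one (vertical derivatives). I would apply the vertical difference quotient $\delta_l=\delta_l^3$ to the $\zeta$-equation, multiply by $-\delta_l^*\delta_l\Delta\zeta$, integrate over $\Omega$, and invoke Lemma \ref{lem2.0}. As in Proposition \ref{lem5.1}, the top-order part of the transport operator is annihilated: since the three-dimensional field $\big(v,-\int_{-h}^z\nabla_H\cdot v\,d\xi\big)$ is divergence free, the integral $\int_\Omega\big[(v\cdot\nabla_H)\delta_l\nabla\zeta-(\int_{-h}^z\nabla_H\cdot v\,d\xi)\partial_z\delta_l\nabla\zeta\big]\cdot\delta_l\nabla\zeta\,d\textbf{x}$ vanishes, leaving only commutator and lower-order terms. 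Estimating those by the H\"older, Sobolev and Cauchy--Schwarz inequalities, using the bounds of Proposition \ref{lem5.1} for $u,\zeta$ and the strong-solution bounds $K_0(t)$ for $v,T$, yields a differential inequality of the form $\frac{d}{dt}\|\delta_l\nabla\zeta\|_2^2+\frac{1}{R}\|\delta_l\nabla^2\zeta\|_2^2\le C(1+\|v\|_{H^3}^2)\|\delta_l\nabla\zeta\|_2^2+(\text{forcing})$ with $R=R_1+R_2$. Multiplying by $s^2$, integrating in time, and passing to the limit $l\to0$ via Lemma \ref{lem0} gives $\sup_{0\le s\le t}(s^2\|\partial_z\zeta\|_{H^1}^2)+\int_0^t s^2\|\partial_z\zeta\|_{H^2}^2\,ds\le K_2'(t)$.

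Stage two (horizontal derivatives). Writing the equation as $\partial_t\zeta+L_1\zeta=f_2$, where $f_2$ collects all remaining terms, I would first establish $\int_0^t s^2\|\nabla f_2\|_2^2\,ds\le K_2''(t)$, and then apply the horizontal difference quotients $\delta_l^i$, $i=1,2$, testing against $-(\delta_l^i)^*\delta_l^i\Delta\zeta$, exactly as in the second half of Proposition \ref{lem5.1}. This produces $\sup_{0\le s\le t}(s^2\|\nabla_H\nabla\zeta\|_2^2)+\int_0^t s^2\|\nabla_H\nabla^2\zeta\|_2^2\,ds\le C(K_2'(t)+K_2''(t))$. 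Combining the two stages to recover all third spatial derivatives, and reading $\partial_t\zeta$ off the equation, then yields the claimed estimate $\sup_{0\le s\le t}(s^2\|\zeta\|_{H^2}^2)+\int_0^t s^2(\|\zeta\|_{H^3}^2+\|\partial_t\zeta\|_{H^1}^2)\,ds\le K_2(t)$.

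The main obstacle is the forcing estimate $\int_0^t s^2\|\nabla f_2\|_2^2\,ds<\infty$: unlike in Proposition \ref{lem5.1}, the $\zeta$-equation carries genuinely quadratic terms at the new differentiation level, namely $2(u\cdot\nabla_H)u$, $(\zeta\cdot\nabla_H)v$ and $(\nabla_H\cdot u)u$, which must be controlled using \emph{only} the previously established bounds. The delicate contributions are $\zeta\,\nabla^2v$ (from $\nabla[(\zeta\cdot\nabla_H)v]$) and $u\,\nabla^2u$ (from $\nabla[(u\cdot\nabla_H)u]$). For the former one uses $\|\zeta\,\nabla^2v\|_2\le\|\zeta\|_6\|\nabla^2v\|_3\le C\|\zeta\|_{H^1}\|v\|_{H^2}^{1/2}\|v\|_{H^3}^{1/2}$, whence $\int_0^t s^2\|\zeta\|_{H^1}^2\|v\|_{H^2}\|v\|_{H^3}\,ds\le\big(\sup_s s\|\zeta\|_{H^1}^2\big)\big(\sup_s\|v\|_{H^2}\big)\int_0^t s\|v\|_{H^3}\,ds$, which is finite by Proposition \ref{lem5.1} and the Cauchy--Schwarz bound $\int_0^t s\|v\|_{H^3}\,ds\le(\int_0^t s^2\,ds)^{1/2}(\int_0^t\|v\|_{H^3}^2\,ds)^{1/2}$; the latter term is treated similarly via $\sqrt s\,u\in L^\infty H^2\cap L^2H^3$. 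The whole point is the bookkeeping of the weight powers, arranged so that every quadratic term is integrable up to $t=0$; once this is in place, the two stages close precisely as in Proposition \ref{lem5.1}.
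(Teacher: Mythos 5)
Your proposal is correct and follows essentially the same argument as the paper: differentiate (\ref{nnw1}) in $z$, run vertical difference quotients with the weight $t^2$ (testing against $-\delta_l^*\delta_l\Delta\zeta$ and using Lemma \ref{lem2.0}), then rewrite the equation as $\partial_t\zeta+L_1\zeta=f_2$, bound $\int_0^t s^2\|\nabla f_2\|_2^2\,ds$ through the weighted bounds of Proposition \ref{lem5.1} together with stage one, run horizontal quotients, pass to the limit via Lemma \ref{lem0}, and read $\partial_t\zeta$ off the equation. The only minor deviation is in stage one, where the paper does not invoke the divergence-free cancellation you use: it keeps $(v\cdot\nabla_H)\zeta$ inside $\partial_z[(v\cdot\nabla_H)u]$ and bounds the resulting term crudely by $\|v\|_\infty\|u\|_{H^3}$, and estimates $\left(\int_{-h}^z\nabla_H\cdot v\,d\xi\right)\delta_l\partial_z\zeta$ directly by $\|\nabla v\|_\infty\|\delta_l\nabla\zeta\|_2$, the weight $s^2$ and the bound $\int_0^t s\|u\|_{H^3}^2\,ds\leq K_1(t)$ making everything integrable, so your cancellation route is valid but not needed at this level.
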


\begin{proof}
By Proposition \ref{lem5.1}, one can differentiate equation (\ref{nnw1}) with respect to $z$ to deduce
\begin{eqnarray}
&\partial_t\zeta+L_1\zeta+f_0k\times\zeta=\left(\int_{-h}^z\nabla_H\cdot vd\xi\right)\partial_z\zeta+\nabla_H\cdot v\partial_zu\nonumber\\
&+\nabla_H\partial_zT-\partial_z[(v\cdot\nabla_H)u+(u\cdot\nabla_H)v-(\nabla_H\cdot v)u].\label{nw5}
\end{eqnarray}
Multiplying this equation by $-\delta_l^*\delta_l\Delta\zeta$, then it follows from Lemma \ref{lem2.0}, the Sobolev embedding inequality and the Cauchy-Schwarz inequality that
\begin{align*}
&\frac{1}{2}\frac{d}{dt}\int_\Omega|\delta_l\nabla\zeta|^2d\textbf{x}+\int_\Omega\left(
\frac{1}{R_1}|\delta_l\nabla_H\nabla\zeta|^2+\frac{1}{R_2}|\delta_l\partial_z\nabla\zeta|^2
\right)d\textbf{x}\\
=&\int_\Omega\delta_l\big\{\partial_z[(v\cdot\nabla_H)u+(u\cdot\nabla_H)v-(\nabla_H\cdot v)u]-\nabla_H\cdot v\partial_zu-\nabla_H\partial_zT\big\}\delta_l\Delta\zeta d\textbf{x}\\
&-\int_\Omega\delta_l\left[\left(\int_{-h}^z\nabla_H\cdot vd\xi\right)\partial_z\zeta\right]\delta_l\Delta\zeta d\textbf{x}\\
\leq&C\|\delta_l\partial_z[(v\cdot\nabla_H)u+(u\cdot\nabla_H)v-(\nabla_H\cdot v)u]-\delta_l(\nabla_H\cdot v\partial_zu+\nabla_H\partial_zT)\|_2\|\delta_l\Delta\zeta\|_2\\
&+\int_\Omega\left[\delta_l\left(\int_{-h}^z\nabla_H\cdot vd\xi\right)\partial_z\zeta(\textbf{x}+le_3,t)+\left(\int_{-h}^z\nabla_H\cdot vd\xi\right)\delta_l\partial_z\zeta\right]\delta_l\Delta\zeta d\textbf{x}\\
\leq&C\|\partial_z^2[(v\cdot\nabla_H)u+(u\cdot\nabla_H)v-(\nabla_H\cdot v)u]-\partial_z(\nabla_H\cdot v\partial_zu+\nabla_H\partial_zT)\|_2\|\delta_l\Delta\zeta\|_2\\
&+C\left(\left\|\delta_l\left(\int_{-h}^z\nabla_H\cdot vd\xi\right)\right\|_\infty\|\partial_z\zeta\|_2+\|\nabla v\|_\infty\|\delta_l\nabla\zeta\|_2\right)\|\delta_l\Delta\zeta\|_2\\
\leq&C\left[\int_\Omega(|v|^2||\nabla^3u|^2+|\nabla v|^2||\nabla^2u|^2+|\nabla^2v|^2|\nabla u|^2+|u|^2|\nabla^3v|^2)d\textbf{x}\right]^{1/2}\|\delta_l\Delta\zeta\|_2\\
&+C\left(\|\partial_zT\|_{H^2}+\left\|\partial_z
\left(\int_{-h}^z\nabla_H\cdot vd\xi\right)\right\|_\infty\|u\|_{H^2}+\|v\|_{H^3}\|\delta_l\nabla\zeta\|_2\right)\|\delta_l\Delta\zeta\|_2\\
\leq&C(\|v\|_\infty\|u\|_{H^3}+\|\nabla v\|_\infty\|u\|_{H^2}+\|\nabla u\|_\infty\|v\|_{H^2}+\|u\|_\infty\|v\|_{H^3}\\
&+\|\partial_zT\|_{H^2}+\|\nabla v\|_\infty\|u\|_{H^2}+\|v\|_{H^3}\|\delta_l\nabla\zeta\|_2)\|\delta_l\Delta\zeta\|_2\\
\leq&C(\|v\|_{H^2}\|u\|_{H^3}+\|u\|_{H^2}\|v\|_{H^3}+\|\partial_zT\|_{H^2}+\|v\|_{H^3}\|
\delta_l\nabla\zeta\|_2)\|\delta_l\Delta\zeta\|_2\\
\leq&\sigma\|\delta_l\Delta\zeta\|_2^2+C_\sigma\|v\|_{H^3}^2\|\delta_l\nabla\zeta\|_2^2+C_\sigma(\|u\|_{H^3}^2
\|v\|_{H^2}^2+\|v\|_{H^3}^2\|u\|_{H^2}^2+\|\partial_zT\|_{H^2}^2),
\end{align*}
with a sufficiently small positive constant $\sigma$, and thus
\begin{align*}
&\frac{d}{dt}\|\delta_l\nabla\zeta\|_2^2+\frac{1}{R}\|\delta_l\nabla^2\zeta\|_2^2\\
\leq&C\|v\|_{H^3}^2\|\delta_l\nabla\zeta\|_2^2+C(\|u\|_{H^3}^2
\|v\|_{H^2}^2+\|v\|_{H^3}^2\|u\|_{H^2}^2+\|\partial_zT\|_{H^2}^2),
\end{align*}
from which it follows that
\begin{align*}
&\frac{d}{dt}(\|\delta_l\nabla\zeta\|_2^2t^2)+\frac{1}{R}\|\delta_l\nabla^2\zeta\|_2^2t^2\\
\leq&C\|v\|_{H^3}^2\|\delta_l\nabla\zeta\|_2^2t^2+C(\|u\|_{H^3}^2
\|v\|_{H^2}^2+\|v\|_{H^3}^2\|u\|_{H^2}^2\\
&+\|\partial_zT\|_{H^2}^2)t^2+2t\|\delta_l\nabla\zeta\|_2^2.
\end{align*}
Combining this inequality with (\ref{nnw4}), it follows
\begin{align*}
&\sup_{0\leq s\leq t}(\|\delta_l\nabla\zeta\|_2^2s^2)+\int_0^ts^2\|\delta_l\nabla^2\zeta\|_2^2ds\\
\leq&Ce^{C\int_0^t\|v\|_{H^3}^2ds}\int_0^t[(\|u\|_{H^3}^2
\|v\|_{H^2}^2+\|v\|_{H^3}^2\|u\|_{H^2}^2+\|\partial_zT\|_{H^2}^2)s^2+s\|\delta_l\nabla\zeta\|_2^2]ds\\
\leq&Ce^{C\int_0^t\|v\|_{H^3}^2ds}\int_0^t[(\|u\|_{H^3}^2
\|v\|_{H^2}^2+\|v\|_{H^3}^2\|u\|_{H^2}^2+\|\partial_zT\|_{H^2}^2)s^2+s\|\partial_z^2\nabla u\|_2^2]ds\\
\leq&Ce^{CK_0(t)}(tK_0(t)K_1(t)+K_0(t)t^2+K_1(t)),
\end{align*}
which, by Lemma \ref{lem0}, implies $t^2\partial_z\zeta\in L^\infty(0,t_0; H^1(\Omega))\cap L^2(0,t_0;H^2(\Omega))$ and
\begin{equation}\label{nnw6}
\sup_{0\leq s\leq t}(s^2\|\partial_z\zeta\|_{H^1}^2)+\int_0^ts^2\|\partial_z\zeta\|_{H^2}^2ds
\leq K_2'(t)
\end{equation}
for any $t\in(0,t_0)$.

Set
\begin{align*}
f_2(x,y,z,t)=&\left(\int_{-h}^z\nabla_H\cdot vd\xi\right)\partial_z\zeta+\nabla_H\cdot v\partial_zu
+\nabla_H\partial_zT\\
&-\partial_z[(v\cdot\nabla_H)u+(u\cdot\nabla_H)v-(\nabla_H\cdot v)u]-f_0k\times\zeta.
\end{align*}
Then it follows that
\begin{align*}
\|\nabla f_2\|_2^2\leq&C\int_\Omega\left[\left(\int_{-h}^h|\nabla^2v|d\xi\right)^2|\partial_z\zeta|^2+\left(\int
_{-h}^h|\nabla v|d\xi\right)^2|\nabla\partial_z\zeta|^2+|\partial_z\nabla^2T|^2\right.\\
&+|v|^2|\nabla^3u|^2+|\nabla v|^2|\nabla^2u|^2+|\nabla^2v|^2|\nabla u|^2+|u|^2|\nabla^3v|^2\bigg]dxdydz\\
\leq&C(\|\nabla^2v\|_2^2\|\partial_z\zeta\|_\infty^2+\|\nabla v\|_\infty^2\|\nabla\partial_z\zeta\|_2^2+\|\partial_zT\|_{H^2}^2+\|v\|_\infty^2\|\nabla^3u\|_2^2\\
&+\|\nabla v\|_\infty^2\|\nabla^2u\|_2^2+\|\nabla u\|_\infty^2\|\nabla^2v\|_2^2+\|u\|_\infty^2\|\nabla^3v\|_2^2)\\
\leq&C(\|\partial_z\zeta\|_{H^2}^2\|v\|_{H^2}^2+\|v\|_{H^3}^2\|\partial_z\zeta\|_{H^2}^2+\|\partial
_zT\|_{H^2}^2\\
&+\|v\|_{H^2}^2\|u\|_{H^3}^2+\|v\|_{H^3}^2\|u\|_{H^2}^2),
\end{align*}
and thus
\begin{align}
\int_0^t s^2\|\nabla f_2\|_2^2ds\leq&C\int_0^t[s^2(\|\partial_z\zeta\|_{H^2}^2\|v\|_{H^2}^2+\|v\|_{H^3}^2
\|\partial_z\zeta\|_{H^2}^2)\nonumber\\
&+s^2(\|v\|_{H^3}^2\|u\|_{H^2}^2+\|u\|_{H^3}^2\|v\|_{H^2}^2)+s^2\|
\partial_zT\|_{H^2}^2]ds\nonumber\\
\leq&C(K_0(t)K_2'(t)+tK_0(t)K_1(t)+t^2K_0(t))\nonumber\\
=&CK_0(t)(K_2'(t)+tK_1(t)+t^2):=K_2''(t). \label{nw7}
\end{align}
Recalling the definition of $f_2$, by (\ref{nw5}), one can easily see that $\zeta$ satisfies
\begin{equation}
\partial_t \zeta +L_1 \zeta =f_2. \label{nw8}
\end{equation}
Multiplying equation (\ref{nw8}) by $-(\delta_l^i)^*\delta_l^i\Delta\zeta, i=1,2$, thanks to Lemma \ref{lem2.0}, we have
\begin{align*}
&\frac{d}{dt}\int_\Omega|\delta_l^i\nabla\zeta|^2dxdydz+\int_\Omega\left(\frac{1}{R_1}|
\delta_l^i\nabla_H\nabla\zeta|^2+\frac{1}{R_2}|\delta_l^i\partial_z\nabla\zeta|^2\right)dxdydz\\
=&-\int_\Omega\delta_l^if_2\delta_l^i\Delta\zeta\leq\eta\int_\Omega|\delta_l^i\Delta\zeta|^2dxdydz
+C\int_\Omega|\delta_l^if_2|^2dxdydz,
\end{align*}
and thus
\begin{equation*}
\frac{d}{dt}\|\delta_l^i\nabla\zeta\|_2^2+\frac{1}{R}\|\delta_l^i\nabla^2\zeta\|_2^2\leq C\|\delta_l^if_2\|_2^2\leq C\|\partial_if_2\|_2^2,
\end{equation*}
with $R=R_1+R_2$, from which we obtain
\begin{align}
\frac{d}{dt}(t^2\|\delta_l^i\nabla\zeta\|_2^2)+\frac{1}{R}t^2\|\delta_l^i\nabla^2\zeta\|_2^2\leq
Ct^2\|\nabla f_2\|_2^2+2t\|\delta_l^i\nabla\zeta\|_2^2.\label{ex1}
\end{align}
Integrating (\ref{ex1}) with respect to $t$, by (\ref{nnw4}), (\ref{nw7}) and Lemma \ref{lem0}, we then obtain
\begin{align*}
&\sup_{0\leq s\leq t}(s^2\|\nabla_H\nabla\zeta\|_2^2)+\int_0^ts^2\|\nabla_H\nabla^2\zeta\|_2^2ds\\
\leq&C\int_0^ts^2\|\nabla f_2\|_2^2ds+C\int_0^ts\|\nabla^3u\|_2^2ds\leq C(K_1(t)+K_2''(t)).
\end{align*}
Combining this with (\ref{nnw6}) and using (\ref{nw7}), (\ref{nw8}), we obtain
\begin{equation}\label{kk1}
\sup_{0\leq s\leq t}(s^2\| \zeta\|_{H^2}^2)+\int_0^ts^2(\| \zeta\|_{H^3}^2+\|\partial_t\zeta\|_{H^1}^2)ds
\leq K_2(t)
\end{equation}
for all $t\in(0,t_0)$. This completes the proof.
\end{proof}

Next we prove the regularity properties of $\eta$ and $\theta$, that is the following:

\begin{proposition}\label{lem5.3}
Let $(v,T)$ be a strong solution to system (\ref{1.10})--(\ref{1.16}) in $\Omega\times(0,t_0)$. Let $\eta$ and $\theta$ be the functions given by (\ref{eta}) and (\ref{theta}), respectively. Then it holds that
\begin{equation*}
t\eta,t\theta\in L^\infty(0,t_0; H^2(\Omega))\cap L^2(0,t_0; H^3(\Omega)),
\quad t\partial_t\eta,t\partial_t\theta\in L^2(0,t_0; H^1(\Omega))
\end{equation*}
and
\begin{equation*}
\sup_{0\leq s\leq t}[s^2(\|\eta\|_{H^2}^2+\|\theta\|_{H^2}^2)]+\int_0^ts^2(\|\eta\|_{H^3}^2
+\|\theta\|_{H^3}^2+\|\partial_t\eta\|_{H^1}^2+\|\partial_t\theta\|_{H^1}^2)ds\leq K_3(t)
\end{equation*}
for any $t\in(0,t_0)$, where $K_3(t)$ is a bounded increasing function on $(0,t_0)$.
\end{proposition}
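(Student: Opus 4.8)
The plan is to handle $\eta$ and $\theta$ by the same scheme used for $\zeta$ in Proposition \ref{lem5.2}: derive the linear parabolic equations they satisfy, bound the forcing using the already-established bounds $K_0,K_1,K_2$, and then run the time-weighted difference-quotient energy estimate. First I would derive the evolution equations. Applying $\nabla_H^\bot\cdot$ to equation (\ref{nnw1}) annihilates the temperature term, since $\nabla_H^\bot\cdot\nabla_HT=0$, giving $\partial_t\eta+L_1\eta+f_0k\times\eta=g_1$, where $g_1$ collects $\nabla_H^\bot\cdot$ of the quadratic terms $(v\cdot\nabla_H)u$, $(u\cdot\nabla_H)v$, $(\nabla_H\cdot v)u$ and of $\left(\int_{-h}^z\nabla_H\cdot v\,d\xi\right)\partial_zu$. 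For $\theta$ I would apply $\nabla_H\cdot$ to (\ref{nnw1}) and add $R_1$ times the temperature equation (\ref{1.12}); the term $-\Delta_HT$ produced by $\nabla_H\cdot(-\nabla_HT)$ is cancelled exactly by the $\frac{1}{R_1}\Delta_H(\nabla_H\cdot u)=\frac{1}{R_1}\Delta_H\theta-\Delta_HT$ coming from the horizontal viscosity (this cancellation is the very reason $\theta$ is defined as the combination $\nabla_H\cdot u+R_1T$, see \cite{CAOTITI3}), and the vertical diffusion rearranges into $\frac{1}{R_2}\partial_z^2\theta$ plus a residual, leaving $\partial_t\theta+L_1\theta+f_0k\times\theta=g_2$ with $g_2=R_1\left(\frac{1}{R_3}-\frac{1}{R_2}\right)\partial_z^2T+g_2^{\mathrm{nl}}$, where $g_2^{\mathrm{nl}}$ contains the analogous quadratic terms.

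Next I would estimate the forcing. Since $\eta$ and $\theta$ live at the level of second derivatives of $v$, the terms in $g_1$ and $g_2^{\mathrm{nl}}$ are structurally identical to $f_2$ in Proposition \ref{lem5.2}: each is at most quadratic and, after H\"older's inequality and the Sobolev embeddings $H^1\hookrightarrow L^6$ and $H^2\hookrightarrow L^\infty$, is dominated by products such as $\|v\|_{H^2}\|u\|_{H^3}$, $\|v\|_{H^3}\|u\|_{H^2}$, $\|v\|_{H^3}\|\nabla_H\cdot\zeta\|_2$ and $\|\partial_zT\|_{H^2}$. The residual $\partial_z^2T$ is controlled purely through the strong-solution regularity $\partial_zT\in L^2(0,t_0;H^2(\Omega))$, so that $\nabla\partial_z^2T\in L^2$ is bounded by $\|\partial_zT\|_{H^2}$. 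Integrating against the weight $s^2$ and invoking the bounds of Propositions \ref{lem5.1} and \ref{lem5.2} (that is $K_0,K_1,K_2$) then gives $\int_0^t s^2(\|\nabla g_1\|_2^2+\|\nabla g_2\|_2^2)\,ds\le K_3''(t)$ for some bounded increasing function $K_3''$.

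Finally I would run the energy estimate. The regularities of $\eta,\theta$ and of $\partial_t\eta,\partial_t\theta$ suffice to apply Lemma \ref{lem2.0}, so I would multiply the two equations by $-\delta_l^*\delta_l\Delta\eta$ and $-\delta_l^*\delta_l\Delta\theta$ (the vertical difference quotient), use Lemma \ref{lem2.0}, the product rule $\delta_l(fg)=f\delta_lg+g(\cdot+le_3)\delta_lf$, and Cauchy--Schwarz to absorb $\|\delta_l\nabla^2(\eta,\theta)\|_2^2$ into the dissipation; multiplying the resulting differential inequality by $t^2$, integrating, and applying Lemma \ref{lem0} yields the $s^2$-weighted control of $\partial_z\eta,\partial_z\theta$ in $L^\infty(0,t;H^1)\cap L^2(0,t;H^2)$. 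Repeating the procedure with the horizontal difference quotients $\delta_l^i$, $i=1,2$, now against the forcing bound $\int_0^t s^2\|\nabla g_i\|_2^2\,ds\le K_3''(t)$, gives the horizontal second-derivative estimates, and combining the two families exactly as in the passage from (\ref{nnw6}) to (\ref{kk1}) produces the asserted $s^2$-weighted $H^2$/$H^3$ bound, with $\|\partial_t\eta\|_{H^1}$ and $\|\partial_t\theta\|_{H^1}$ read off directly from the equations.

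I expect the main obstacle to be the derivation and exploitation of the cancellation in the $\theta$-equation. One must verify that the $\Delta_HT$ coupling cancels so that $\theta$ solves a genuinely parabolic equation with the full diffusion $L_1$, and that the surviving term $R_1\left(\frac{1}{R_3}-\frac{1}{R_2}\right)\partial_z^2T$ is controllable by the available $\partial_zT\in L^2(0,t_0;H^2)$ regularity rather than requiring $T\in L^2(0,t_0;H^3)$, which is unavailable under only vertical diffusion. Once this structural point and the forcing bound $K_3''$ are secured, the time-weighted difference-quotient bookkeeping is routine, mirroring Proposition \ref{lem5.2} line by line.
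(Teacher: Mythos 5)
Your proposal is correct and matches the paper's proof in all essentials: the same equations $\partial_t\eta+L_1\eta=f_3$, $\partial_t\theta+L_1\theta=f_4$ with the $\Delta_HT$-cancellation leaving the residual $R_1\left(\frac{1}{R_3}-\frac{1}{R_2}\right)\partial_z^2T$ (controlled by $\|\partial_zT\|_{H^2}$), the same $s^2$-weighted bound on $\|\nabla f_3\|_2,\|\nabla f_4\|_2$ via $K_0,K_1,K_2$, and the same time-weighted difference-quotient energy estimate with Lemma \ref{lem0}, the $\partial_t$ bounds being read off from the equations. The only differences are cosmetic: since the full forcing gradients are already controlled, the paper applies $\delta_l^i$ for all $i=1,2,3$ in a single pass rather than your two-stage (vertical with transport structure, then horizontal) scheme, which works but is unnecessary here; and your Coriolis terms $f_0k\times\eta$, $f_0k\times\theta$ are ill-formed for scalars --- they should appear as the source terms $f_0(R_1T-\theta)$ and $f_0\eta$ inside $f_3$ and $f_4$, which changes nothing in the estimates.
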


\begin{proof}
One can easily check that $\eta$ and $\theta$ satisfy
\begin{eqnarray}
&\partial_t\eta+L_1\eta=f_3,\label{k1}\\
&\partial_t\theta+L_1\theta=f_4,\label{k2}
\end{eqnarray}
where
\begin{eqnarray*}
f_3(x,y,z,t)=&-\nabla_H^\bot\cdot\Big[(v\cdot\nabla_H)u-\left(\int_{-h}^z\nabla_H\cdot vd\xi\right)\partial_zu+(u\cdot\nabla_H)v\\
&-(\nabla_H\cdot v)u\Big]+f_0(R_1T-\theta),\\
f_4(x,y,z,t)=&-\nabla_H\cdot\Big[(v\cdot\nabla_H)u-\left(\int_{-h}^z\nabla_H\cdot vd\xi\right)\partial_zu+(u\cdot\nabla_H)v\\
&-(\nabla_H\cdot v)u\Big]+f_0\eta+
R_1\left(\frac{1}{R_3}-\frac{1}{R_2}\right)\partial_z^2T\\
&- R_1\left[v\cdot\nabla_HT-\left(\int_{-h}^z\nabla_H\cdot vd\xi\right)\left(\partial_zT+\frac{1}{h}\right)\right].
\end{eqnarray*}
Direct calculations show
\begin{align*}
\|\nabla f_3\|_2^2\leq&C\int_\Omega\bigg[|v|^2|\nabla^3u|^2+|\nabla v|^2|\nabla^2u|^2+|\nabla^2v|^2|\nabla u|^2+|u|^2|\nabla^3v|^2\\
&+\left(\int_{-h}^h|\nabla v|d\xi\right)^2|\nabla^2\zeta|^2+\left(\int_{-h}^h|\nabla^2 v|d\xi\right)^2|\nabla\zeta|^2\\
&+\left(\int_{-h}^h|\nabla^3 v|d\xi\right)^2|\zeta|^2+|\nabla T|^2+|\nabla^3v|^2\bigg]dxdydz\\
\leq&C(\|v\|_\infty^2\|\nabla^3u\|_2^2+\|\nabla v\|_\infty^2\|\nabla^2u\|_2^2+\|u\|_\infty^2\|\nabla^3v\|_2^2\\
&+\|\nabla v\|_\infty\|\nabla^2\zeta\|_2^2+\|\nabla^2v\|_2^2\|\nabla\zeta\|_\infty^2+\|\nabla^3v\|_2^2
\|\zeta\|_\infty^2\\
&+\|\nabla^2v\|_2^2\|\nabla u\|_\infty^2+\|\nabla T\|_2^2+\|\nabla^3v\|_2^2)\\
\leq&C(\|v\|_{H^2}^2\|u\|_{H^3}^2+\|v\|_{H^3}^2\|u\|_{H^2}^2+\|v\|_{H^2}^2\|\zeta\|_{H^3}^2\\
&+\|v\|_{H^3}^2\|\zeta\|_{H^2}^2+\|T\|_{H^2}^2+\|v\|_{H^3}^2),
\end{align*}
and
\begin{align*}
\|\nabla f_4\|_2^2\leq&C\int_\Omega\bigg[|v|^2|\nabla^3u|^2+|\nabla v|^2|\nabla^2u|^2+|\nabla^2v|^2|\nabla u|^2+|u|^2|\nabla^3v|^2\\
&+\left(\int_{-h}^h|\nabla v|d\xi\right)^2|\nabla^2\zeta|^2+\left(\int_{-h}^h|\nabla^2 v|d\xi\right)^2|\nabla\zeta|^2\\
&+\left(\int_{-h}^h|\nabla^3 v|d\xi\right)^2|\zeta|^2+|\nabla^3v|^2+|\nabla \partial_z^2T|^2+|\nabla v|^2|\nabla T|^2+|v|^2|\nabla^2T|^2\\
&+\left(\int_{-h}^h|\nabla^2v|d\xi\right)^2(|\partial_zT|^2+1)
+\left(\int_{-h}^h|\nabla v|d\xi\right)^2|\nabla\partial_zT|^2\bigg]dxdydz\\
\leq&C(\|v\|_{H^2}^2\|u\|_{H^3}^2+\|v\|_{H^3}^2\|u\|_{H^2}^2+\|v\|_{H^2}^2\|\zeta\|_{H^3}^2
+\|v\|_{H^3}^2\|\zeta\|_{H^2}^2\\
&+\|v\|_{H^3}^2+\|\partial_zT\|_{H^2}^2+\|\nabla v\|_\infty^2\|\nabla T\|_2^2+\|v\|_\infty^2\|\nabla^2T\|_2^2\\
&+\|\partial_zT\|_\infty^2 \|\nabla^2v\|_2^2+\|\nabla v\|_\infty^2\|\nabla^2T\|_2^2)\\
\leq&C(\|v\|_{H^2}^2\|u\|_{H^3}^2+\|v\|_{H^3}^2\|u\|_{H^2}^2+\|v\|_{H^2}^2\|\zeta\|_{H^3}^2
+\|v\|_{H^3}^2\|\zeta\|_{H^2}^2\\
&+\|v\|_{H^3}^2+\|\partial_zT\|_{H^2}^2+\|v\|_{H^3}^2\|T\|_{H^2}^2
+\|v\|_{H^2}^2\|\partial_zT\|_{H^2}^2)\\
\leq&C(\|v\|_{H^2}^2\|u\|_{H^3}^2+\|v\|_{H^3}^2\|u\|_{H^2}^2+\|v\|_{H^2}^2\|\zeta\|_{H^3}^2
+\|v\|_{H^3}^2\|\zeta\|_{H^2}^2)\\
&+C(\|\partial_zT\|_{H^2}^2+\|v\|_{H^3}^2)(1+\|v\|_{H^2}^2+\|T\|_{H^2}^2).
\end{align*}
By the aid of (\ref{nnw4}) and (\ref{kk1}), it follows from the above two inequalities that
\begin{align*}
\int_0^ts^2\|\nabla f_3\|_2^2ds\leq&C\int_0^ts^2(\|v\|_{H^2}^2\|u\|_{H^3}^2+\|v\|_{H^3}^2\|u\|_{H^2}^2+\|v\|_{H^2}^2\|\zeta\|_{H^3}^2\\
&+\|v\|_{H^3}^2\|\zeta\|_{H^2}^2+\|T\|_{H^2}^2+\|v\|_{H^3}^2)ds\\
\leq&CK_0(t)(tK_1(t)+K_2(t)+t^3+t^2)=:K_3'(t)
\end{align*}
and
\begin{align*}
\int_0^ts^2\|\nabla f_4\|_2^2ds\leq&C\int_0^ts^2[(\|v\|_{H^2}^2\|u\|_{H^3}^2+\|v\|_{H^3}^2\|u\|_{H^2}^2
+\|v\|_{H^2}^2\|\zeta\|_{H^3}^2\\
&+\|v\|_{H^3}^2\|\zeta\|_{H^2}^2)+(\|\partial_zT\|_{H^2}^2+\|v\|_{H^3}^2)
(1+\|v\|_{H^2}^2+\|T\|_{H^2}^2)]ds\\
\leq&CK_0(t)(K_2(t)+tK_1(t)+K_0(t)t^2+t^2)=:K_3''(t).
\end{align*}
Using the above two inequalities, multiplying (\ref{k1}) and (\ref{k2}) by $-(\delta_l^i)^*\delta_l^i\Delta\eta$ and $-(\delta_l^i)^*\delta_l^i\Delta\theta$, $i=1,2,3$, respectively, and summing the resulting equations up, then using similar argument as for (\ref{ex1}) leads to
\begin{align*}
&\frac{d}{dt}[t^2(\|\delta_l^i\nabla\eta\|_2^2+\|\delta_l^i\nabla\theta\|_2^2)]+\frac{1}{R}t^2(
\|\delta_l^i\nabla^2\eta\|_2^2+\|\delta_l^i\nabla^2\theta\|_2^2)\\
\leq&Ct^2(\|\nabla f_3\|_2^2+\|\nabla f_4\|_2^2)+2t
(\|\delta_l^i\nabla\eta\|_2^2+\|\delta_l^i\nabla\theta\|_2^2)\\
\leq&Ct^2(\|\nabla f_3\|_2^2+\|\nabla f_4\|_2^2)+Ct
(\|\nabla^2\eta\|_2^2+\|\nabla^2\theta\|_2^2)\\
\leq&Ct^2(\|\nabla f_3\|_2^2+\|\nabla f_4\|_2^2)+Ct(\|u\|_{H^3}^2+\|T\|_{H^2}^2).
\end{align*}
Integrating this inequality with respect to $t$ and applying Lemma \ref{lem0}, then it follows from (\ref{nnw4}) and the estimates on $f_3,f_4$ that
\begin{align*}
&\sup_{0\leq s\leq t}[s^2(\|\nabla^2\eta\|_2^2+\|\nabla^2\theta\|_2^2)]+\int_0^ts^2(\|\nabla^3\eta\|_2^2
+\|\nabla^3\theta\|_2^2)ds\\
\leq&C(K_3'(t)+K_3''(t)+t^2K_0(t)+K_1(t)).
\end{align*}
By the aid of this inequality and using the estimates for $f_3,f_4$, equations (\ref{k1}) and (\ref{k2}), we obtain
\begin{equation*}
\sup_{0\leq s\leq t}[s^2(\|\eta\|_{H^2}^2+\|\theta\|_{H^2}^2)]+\int_0^ts^2(\|\eta\|_{H^3}^2
+\|\theta\|_{H^3}^2+\|\partial_t\eta\|_{H^1}^2+\|\partial_t\theta\|_{H^1}^2)ds\leq K_3(t)
\end{equation*}
for any $t\in(0,t_0)$. This completes the proof.
\end{proof}

Finally, we consider the regularity properties of $\bar v$.

\begin{proposition}\label{lem5.4}
Let $(v,T)$ be a strong solution to system (\ref{1.10})--(\ref{1.16}) in $\Omega\times(0,t_0)$. Let $\bar v$ be the vertical average of $v$ as it was defined in the Introduction.
Then
\begin{equation*}
\sqrt t\bar v\in L^\infty(0,t_0; H^3(\Omega))\cap L^2(0,t_0; H^4(\Omega)),
\quad \sqrt t\partial_t\bar v\in L^2(0,t_0; H^2(\Omega)),
\end{equation*}
and
\begin{equation*}
\sup_{0\leq s\leq t}(s\|\bar v\|_{H^3}^2)+\int_0^ts(\|\bar v\|_{H^4}^2+\|\partial_t\bar v\|_{H^2}^2)ds\leq K_4(t),
\end{equation*}
for any $t\in(0,t_0)$, where $K_4(t)$ is a bounded increasing function on $(0,t_0)$.
\end{proposition}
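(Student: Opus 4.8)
The plan is to exploit that $\bar v$ depends only on the horizontal variables and solves the two--dimensional, forced, divergence--free system (\ref{1.17})--(\ref{1.18}), so the whole matter reduces to a time--weighted $H^3$ estimate for a 2D incompressible flow on $M$. Writing the equation in the Stokes form
\begin{equation*}
\partial_t\bar v-\tfrac{1}{R_1}\Delta_H\bar v+\nabla_Hp_s
=-(\bar v\cdot\nabla_H)\bar v-\overline{(\tilde v\cdot\nabla_H)\tilde v+(\nabla_H\cdot\tilde v)\tilde v}-f_0k\times\bar v+\nabla_H\Big(\tfrac{1}{2h}\!\int_{-h}^h\!\!\int_{-h}^zT\,d\xi\,dz\Big),
\end{equation*}
the first observation is that, because $\nabla_H\cdot\bar v=0$, every genuine horizontal gradient on the two sides --- the surface pressure $\nabla_Hp_s$ and the temperature term --- pairs to zero against a divergence--free test field. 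Thus $T$ never appears directly in the energy identities (it enters only through $\tilde v=v-\bar v$), and the sole true forcing is the baroclinic stress $F:=\overline{(\tilde v\cdot\nabla_H)\tilde v+(\nabla_H\cdot\tilde v)\tilde v}$, a function of $(x,y)$ alone.

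First I would carry out the estimate rigorously by the horizontal difference--quotient scheme already used in Propositions \ref{lem5.1}--\ref{lem5.3}: apply $\nabla_H\delta_l^i$ ($i=1,2$) to the equation, test against $-(\delta_l^i)^*\delta_l^i\Delta_H\nabla_H\bar v$, invoke Lemma \ref{lem2.0}, and pass $l\to0$ through Lemma \ref{lem0}, thereby gaining one derivative beyond the $L^\infty(0,t_0;H^2)\cap L^2(0,t_0;H^3)$ class in which $\bar v$ already lies (by the very definition of $K_0(t)$). The dissipative term produced is $\tfrac{1}{R_1}\|\bar v\|_{H^4}^2$ modulo lower order controlled by Poincar\'e. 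Beyond the two gradient cancellations above, the key structural fact is that the top--order convective contribution $\int_M(\bar v\cdot\nabla_H)(\nabla_H^3\bar v)\cdot\nabla_H^3\bar v=\tfrac12\int_M(\bar v\cdot\nabla_H)|\nabla_H^3\bar v|^2$ vanishes by $\nabla_H\cdot\bar v=0$, leaving only the commutators $[\nabla_H^3,\bar v\cdot\nabla_H]\bar v$. In two dimensions these are estimated by $H^3(M)\hookrightarrow W^{1,\infty}$ and the Gagliardo--Nirenberg inequality $\|\nabla_H^2\bar v\|_4^2\le C\|\bar v\|_{H^2}\|\bar v\|_{H^3}$, giving bounds of the form $C\|\bar v\|_{H^3}^3$ and $C\|\bar v\|_{H^2}\|\bar v\|_{H^3}^2$; the former is handled by a Gronwall factor $\|\bar v\|_{H^3}\in L^1(0,t_0)$, the latter by the bounded factor $\|\bar v\|_{H^2}\in L^\infty(0,t_0)$, both furnished by $K_0(t)$.

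Next I would dispose of $F$. Since $\tilde v\in L^\infty(0,t_0;H^2)\cap L^2(0,t_0;H^3)$, the Leibniz rule with $H^2(\Omega)\hookrightarrow L^\infty$ and $\|\nabla^2\tilde v\|_3\le C\|\tilde v\|_{H^2}^{1/2}\|\tilde v\|_{H^3}^{1/2}$ yields
\begin{equation*}
\|F\|_{H^2(M)}^2\le C\big(\|\tilde v\|_{H^2}^2\|\tilde v\|_{H^3}^2+\|\tilde v\|_{H^2}^4\big),\qquad \int_0^t\|F\|_{H^2(M)}^2\,ds\le CK_0(t)^2,
\end{equation*}
so that, after integrating the forcing contribution $\int_M\nabla_H^3F\cdot\nabla_H^3\bar v=-\int_M\nabla_H^2F\cdot\nabla_H^4\bar v$ by parts and splitting it as $\sigma\|\bar v\|_{H^4}^2+C_\sigma\|F\|_{H^2}^2$, it is time--integrable. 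Multiplying the resulting differential inequality by the weight $t$ --- which precisely compensates for having only $\bar v_0\in H^2$ rather than $H^3$ --- produces
\begin{equation*}
\frac{d}{dt}\big(t\|\bar v\|_{H^3}^2\big)+\frac{t}{2R_1}\|\bar v\|_{H^4}^2
\le C(1+\|\bar v\|_{H^3}^2)\,t\|\bar v\|_{H^3}^2+Ct\|F\|_{H^2}^2+\|\bar v\|_{H^3}^2 .
\end{equation*}
Because $\bar v\in L^2(0,t_0;H^3)$ makes both $\int_0^t\|\bar v\|_{H^3}^2\,ds$ and the Gronwall exponent finite (all dominated by $K_0(t)$), Gronwall's inequality gives the claimed bound on $\sup_{s\le t}s\|\bar v\|_{H^3}^2+\int_0^ts\|\bar v\|_{H^4}^2\,ds$; the estimate for $\sqrt t\,\partial_t\bar v$ in $L^2(0,t_0;H^2)$ then follows by reading $\partial_t\bar v=\tfrac1{R_1}\Delta_H\bar v-\nabla_Hp_s-(\bar v\cdot\nabla_H)\bar v-\cdots$ off the equation and inserting the bounds just obtained together with $\int_0^ts\|F\|_{H^2}^2\,ds\le t_0CK_0(t)^2$.

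The step I expect to be the main obstacle is the rigorous justification of the top--order $H^3$ energy identity: exactly as in Propositions \ref{lem5.1}--\ref{lem5.3}, one cannot legitimately pair $\partial_t\bar v$ with a sixth--order test field, so the entire computation must be routed through the horizontal difference quotients $\delta_l^i$ and Lemma \ref{lem0}, with careful tracking of the weight $t$ under the limit $l\to0$. A secondary technical point is the bookkeeping in the two--dimensional commutator estimates for $(\bar v\cdot\nabla_H)\bar v$, where one must verify that every term lands either in the dissipation $\|\bar v\|_{H^4}^2$ after a Cauchy--Schwarz split, or against a factor that is integrable in time by virtue of $\bar v\in L^\infty(0,t_0;H^2)\cap L^2(0,t_0;H^3)$.
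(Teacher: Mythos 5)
Your proposal is correct, and its scaffold coincides with the paper's proof: both reduce the claim to the averaged two-dimensional Stokes system (\ref{bar1})--(\ref{bar2}), with the temperature term absorbed into the pressure $\bar p$ so that $T$ disappears from the estimate; both run the $t$-weighted third-order estimate through the horizontal difference quotients $\delta_l^i$, Lemma \ref{lem2.0} and Lemma \ref{lem0}, exactly as in Propositions \ref{lem5.1}--\ref{lem5.3}; and both recover $\sqrt t\,\partial_t\bar v\in L^2(0,t_0;H^2)$ from the equation via the $L^2$ theory of the Stokes system (your ``read it off the equation'' step tacitly needs $\|\nabla_H\bar p\|_{H^2}\le C\|f_5\|_{H^2}$, which is that theory). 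The one genuine divergence is the treatment of $(\bar v\cdot\nabla_H)\bar v$: you keep it on the left-hand side, exploit the cancellation $\int_M(\bar v\cdot\nabla_H)\nabla_H^3\bar v\cdot\nabla_H^3\bar v\,dxdy=0$ together with commutator estimates, and close by Gronwall using $\|\bar v\|_{H^3}\in L^1(0,t_0)$. The paper instead lumps it into the forcing $f_5=-(\bar v\cdot\nabla_H)\bar v-\overline{(\tilde v\cdot\nabla_H)\tilde v+(\nabla_H\cdot\tilde v)\tilde v}-f_0k\times\bar v$ and simply observes that $\|\nabla_H^2f_5\|_2^2\le C(\|v\|_{H^2}^2\|v\|_{H^3}^2+\|v\|_{H^2}^2)$ is already integrable in time by the definition of $K_0(t)$; so no Gronwall factor and no transport structure are needed at all --- at this regularity level ($v\in L^\infty(0,t_0;H^2)\cap L^2(0,t_0;H^3)$) the quadratic term is subcritical, and the weighted inequality integrates directly. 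Your route is somewhat more robust (it would survive weaker coefficient bounds) but costs exactly the extra care you flag yourself: meshing the divergence-free cancellation with the difference-quotient regularization, where $\delta_l^i[(\bar v\cdot\nabla_H)\bar v]=(\bar v(\cdot+le_i)\cdot\nabla_H)\delta_l^i\bar v+(\delta_l^i\bar v\cdot\nabla_H)\bar v$ and only the shifted transport piece cancels. Both arguments are sound; the paper's is the shorter of the two.
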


\begin{proof}
By equation (\ref{1.17}) and (\ref{1.18}), $\bar v$ satisfies
\begin{eqnarray}
&\partial_t\bar v-\frac{1}{R_1}\Delta_H\bar v+\nabla_H\bar p(x,y,t)=f_5,\label{bar1}\\
&\nabla_H\cdot\bar v=0, \label{bar2}
\end{eqnarray}
where
\begin{align*}
\bar p(x,y,t)=&p_s(x,y,t)-\frac{1}{2h}\int_{-h}^h
\int_{-h}^zT(x,y,\xi,t)d\xi dz,\\
f_5(x,y,t)=&-(\bar v\cdot\nabla_H)\bar v-\overline{(\tilde v\cdot\nabla_H)\tilde v+(\nabla_H\cdot\tilde v)\tilde v}-f_0k\times\bar v.
\end{align*}
It follows from the Sobolev embedding inequality that
\begin{align*}
\|\nabla^2_Hf_5\|_2^2\leq&\int_M|\nabla^2_H[(\bar v\cdot\nabla_H)\bar v+\overline{(\tilde v\cdot\nabla_H)\tilde v+(\nabla_H\cdot\tilde v)\tilde v}+f_0k\times\bar v]|^2dxdy\\
\leq&C\int_M\Big[\left|\nabla_H^2\left(\overline{(\tilde v\cdot\nabla_H)\tilde v+(\nabla_H\cdot\tilde v)\tilde v}\right)\right|^2+|\bar v|^2|\nabla_H^3\bar v|^2\\
&+|\nabla_H\bar v|^2|\nabla_H^2\bar v|^2+|\nabla_H^2\bar v|^2\Big]dxdy\\
\leq&\int_M\bigg[\left(\int_{-h}^h(|\tilde v||\nabla^3_H\tilde v|+|\nabla_H\tilde v||\nabla_H^2\tilde v|)d\xi\right)^2\\
&+|\bar v|^2|\nabla_H^3\bar v|^2+|\nabla_H\bar v|^2|\nabla_H^2\bar v|^2+|\nabla_H^2\bar v|^2\bigg]dxdy\\
\leq&C(\|\bar v\|_\infty^2\|\nabla_H^3\bar v\|_2^2+\|\nabla_H\bar v\|_\infty^2\|\nabla_H^2\bar v\|_2^2+\|\tilde v\|_\infty^2\|\nabla_H^3\tilde v\|_2^2\\
&+\|\nabla_H\tilde v\|_\infty^2\|\nabla_H^2\tilde v\|_2^2+\|\nabla_H^2\bar v\|_2^2)\\
\leq&C(\|v\|_{H^3}^2\|v\|_{H^2}^2+\|v\|_{H^2}^2).
\end{align*}
Note that $\bar v\in L^2(0,t_0; H^3(M))$ and $\partial_t\bar v\in L^2(0,t_0; H^1(M))$. Applying the operator $\nabla_H$ to equation (\ref{bar1}), multiplying the resulting equation by $-(\delta_l^i)^*\delta_l^i\nabla_H\Delta_H\bar v, i=1,2$ and using (\ref{bar2}), then similar argument to that for (\ref{ex1}) leads to
\begin{align*}
&\frac{d}{dt}(t\|\delta_l^i\Delta_H\bar v\|_2^2)+\frac{1}{R_1}t\|\delta_l^i\nabla_H\Delta_H\bar v\|_2^2\\
\leq&Ct\|\nabla_H^2 f_5\|_2^2+\|\delta_l^i\Delta_H\bar v\|_2^2
\leq C(t\|\nabla_H^2 f_5\|_2^2+\|\nabla_H\Delta_H\bar v\|_2^2).
\end{align*}
Integrating this inequality with respect to $t$ and applying Lemma \ref{lem0}, then one has
\begin{align*}
&\sup_{0\leq s\leq t}(s\|\nabla_H\Delta_H\bar v\|_2^2)+\int_0^ts\|\nabla_H^2\Delta_H\bar v\|_2^2ds\\
\leq&C\int_0^t(s\|\nabla_H^2f_5\|_2^2+\|\bar v\|_{H^3}^2)ds \leq CK_0(t)(K_0(t)t+t^2+1).
\end{align*}
By the aid of this inequality, applying the $L^2$ theory of Stokes equations and using the estimate on $f_5$, we obtain
\begin{equation*}
\sup_{0\leq s\leq t}(s\|\bar v\|_{H^3}^2)+\int_0^ts(\|\bar v\|_{H^4}^2+\|\partial_t\bar v\|_{H^2}^2)ds\leq K_4(t)
\end{equation*}
for any $t\in(0,t_0)$. This completes the proof.
\end{proof}

\section*{Acknowledgments}
{The work of C.C.  work is  supported in part by NSF grant DMS-1109022. The work of E.S.T. is supported in part by the Minerva Stiftung/Foundation, and  by the NSF
grants DMS-1009950, DMS-1109640 and DMS-1109645.}
\par

\end{document}